\documentclass[a4paper, reqno]{amsart}
\usepackage{graphicx} % Required for inserting images
\usepackage[utf8]{inputenc}
\usepackage{amsmath}
\usepackage{amsthm}
\usepackage{amssymb}
\usepackage[foot]{amsaddr}
\usepackage[hidelinks,colorlinks=true]{hyperref}
\hypersetup{allcolors=[rgb]{0,0.31,0.62}}
\usepackage{thmtools}   % Lar deg bruke \autoref.
\usepackage{mathrsfs}
\usepackage{enumitem}
\usepackage{mathtools}
\usepackage{caption}
\usepackage{subcaption}

\usepackage{tikz}
\usetikzlibrary{matrix} % For å kunne bruke \diagram må du ha denne.
\newcommand{\diagram}[3]{\matrix (#1) [matrix of math nodes,row
  sep={#2},column sep={#3},text height=1.5ex,text
  depth=0.25ex]}
\usepackage{tikz-cd}

\theoremstyle{plain}
\newtheorem{theorem}{Theorem}[section]
\newtheorem{proposition}[theorem]{Proposition}
\newtheorem{corollary}[theorem]{Corollary}
\newtheorem{lemma}[theorem]{Lemma}

\theoremstyle{definition}
\newtheorem{definition}[theorem]{Definition}
\newtheorem{example}[theorem]{Example}
\newtheorem{remark}[theorem]{Remark}

\DeclareMathOperator{\id}{id}
\DeclareMathOperator{\colim}{colim}

\DeclareMathOperator{\op}{op}
\DeclareMathOperator{\Ima}{im}

\newcommand{\R}{\mathbb{R}} % Tavlefet R - \R

\newcommand{\Fb}{\mathbb{Fb}}

\newcommand{\A}{\mathscr{A}}
\newcommand{\B}{\mathscr{B}}
\newcommand{\C}{\mathscr{C}}

\newcommand{\Top}{\mathsf{Top}}
\newcommand{\Mod}{\mathsf{Mod}}

\newcommand{\SCplx}{\mathsf{SCplx}}

\newcommand{\Vecs}{\mathsf{Vec}}

\newcommand{\Po}{\mathcal{P}}
\newcommand{\X}{\mathcal{X}}
\newcommand{\Yc}{\mathcal{Y}}

\newcommand{\Nn}{\mathbb{N}}
\DeclareMathOperator{\coker}{coker}
\DeclareMathOperator{\Fun}{Fun}

\DeclareMathOperator{\Lan}{Lan}
\DeclareMathOperator{\Ran}{Ran}

\DeclareMathOperator{\tfib}{tfib}
\DeclareMathOperator{\tcofib}{tcofib}
\DeclareMathOperator{\fib}{fib}
\DeclareMathOperator{\cofib}{cofib}
\DeclareMathOperator{\Cr}{cr}
\DeclareMathOperator{\jdim}{jdim}
\DeclareMathOperator{\mdim}{mdim}
\DeclareMathOperator{\PC}{PC}
\DeclareMathOperator{\CC}{CC}

\numberwithin{equation}{subsection}

\newtheorem*{proposition*}{Proposition}
\newtheorem*{theorem*}{Theorem}

\newsavebox{\pullback}
\sbox\pullback{%
\begin{tikzpicture}%
\draw (0,1ex) -- (1ex,1ex);%
\draw (1ex,0ex) -- (1ex,1ex);%
\end{tikzpicture}}

\title{Cross effects for functors from posets}
\author{Bjørnar Gullikstad Hem}
\address{École Polytechnique Fédérale de Lausanne}
\email{bjornar.hem@epfl.ch}
\date{\today}

\begin{document}

\maketitle

\begin{abstract}
We establish a precise relationship between functor calculus and the projective dimension of multipersistence modules.
Specifically, we develop a new notion of functor calculus for functors from posets, which detects vanishing total fibers of cubes.
We give an explicit construction of the universal approximation functors of this functor calculus.
We then use these approximations to prove two new theorems, providing necessary and sufficient conditions for an $n$-parameter multipersistence module to have projective dimension at most $n-1$ and at most $n-2$.
\end{abstract}

\setcounter{tocdepth}{1}
\tableofcontents

\section{Introduction}

\subsection{Background}

Within the field of topological data analysis (TDA), multipersistent homology \cite{multipersistence_source_1, multipersistence_source_2} is the generalization of persistent homology to filtrations by two or more parameters.
Multipersistence arises in many different situations where the need for several parameters becomes apparent, such as time-varying data \cite{xian2020capturing, bhaskar2019analyzing} or data with outliers or variations in density \cite{vipond2021multiparameter, lesnick2015interactive}.
While 1-parameter persistence modules always decompose as a direct sum of simple components, called \emph{interval modules} \cite{interval_decomp_source, structure_theorem_crawley, structure_theorem_webb}, the same is not true for multipersistence (see, e.g., \cite[Example 8.3]{BotnanLesnickMultipersistence}).
Therefore, an important research topic in TDA is understanding the representation theory of multipersistence modules.

One of the most fundamental invariants of multipersistence modules is their projective dimension, i.e., the length of their minimal projective resolution.
It is a well known fact that multipersistence modules admit finite projective resolutions; in fact, an $n$-parameter persistence module has projective dimension at most $n$ \cite{BotnanLesnickMultipersistence}.
Moreover, the minimal projective resolution can be computed explicitly, using Koszul complexes \cite{realisationsposetstameness}.

The field of functor calculus consists of several closely related mathematical theories, all revolving around the idea of approximating a functor by a tower of other functors often referred to as \emph{degree $n$ approximations}.
Examples of variants of functor calculus include Goodwillie calculus \cite{goodwillie_calculus, goodwillie_calculus2, goodwillie_calculus3}, manifold calculus \cite{Weiss_1999, Goodwillie_1999} and abelian calculus \cite{abelian_calculus}.
A dual notion of functor calculus, called functor \emph{cocalculus}, also exists, with examples including \cite{dual_calculus}.

In \cite{hem2025posetfunctorcocalculusapplications}, we introduced a new flavor of functor cocalculus, \emph{poset cocalculus}.
Given a functor $F$ from a poset (satisfying certain conditions\footnote{In particular, the poset should be a distributive lattice.}) to a (sufficiently nice\footnote{See \cite[1.4]{hem2025posetfunctorcocalculusapplications} for the exact requirements.}) model category, poset cocalculus produces a cotower of codegree $n$ approximations, i.e., a commutative diagram,
\begin{equation*}
\begin{aligned}
\begin{tikzpicture}
\diagram{d}{3em}{3em}{
    \vdots & \ \\
    T_1 F & \ \\
    T_0 F & F, \\
};

\path[->,font = \scriptsize, midway]
(d-2-1) edge (d-1-1)
(d-1-1) edge (d-3-2)
(d-3-1) edge (d-2-1)
(d-2-1) edge (d-3-2)
(d-3-1) edge (d-3-2);
\end{tikzpicture}
\end{aligned}
\end{equation*}
These codegree $n$ approximations are defined in terms of a universal property involving preservation of certain colimits. Furthermore, taking the codegree $n$ approximation of a persistence module is stable under an appropriate notion of generalized interleaving distance.
The theory also dualizes, yielding the functor calculus \emph{poset calculus}.

In \cite{hem2025decomposingmultipersistencemodulesusing}, we applied poset (co)calculus to the representation theory of multipersistence modules.
We showed that the codegree 1 approximation captures the notion of \emph{middle exactness} \cite{botnanMiddleExactness}, a local condition for interval decomposability that has been used to show that a class of multifiltrations called \emph{interlevel persistence} \cite{interlevelset_persistence_carlsson} gives rise to interval-decomposable multipersistence modules.
We further utilized poset cocalculus to establish new necessary conditions for interval decomposability of multipersistence modules.

\subsection{Main contributions}

In this paper, we develop a new theory of functor cocalculus, called \emph{cross-poset cocalculus}.
In this cocalculus, a functor $F$ from a lattice to a pointed, cocomplete category is said to be \emph{cross-codegree} $n$ if, for every strongly bicartesian $(n+1)$-cube $\X \colon \Po([n]) \to P$, the total cofiber of $F \circ \X$ is 0 (these concepts will be explained in \autoref{sec:prelims}).
This is a strictly weaker condition than the ``codegree $n$'' notion used in poset cocalculus.

We give a construction of the universal cross-codegree $n$ approximation, $\Gamma_n$, in the case where the target category is abelian.
Remarkably, these approximations can be defined succinctly from the $T_n$ functors of poset cocalculus, in the following way:
\begin{equation*}
    \Gamma_n F = \Ima(T_n F \to F).
\end{equation*}
For every $n \ge 0$, $\Gamma_n$ admits the structure of an idempotent comonad, and these comonads fit into a cotower of the following form,
\begin{center}
    \begin{tikzcd}
    \vdots \arrow[rddd, tail]               &   \\
    \Gamma_2 F \arrow[u, tail] \arrow[rdd, tail] &   \\
    \Gamma_1 F \arrow[rd, tail] \arrow[u, tail]  &   \\
    \Gamma_0 F \arrow[r, tail] \arrow[u, tail]   & F.
    \end{tikzcd}
\end{center}
Furthermore, all the arrows in this diagram are monomorphisms.
The following results show that $\Gamma_n$ is indeed the universal cross-codegree $n$ approximation functor.
\begin{theorem*}[\autoref{thm:theoremA}]
    Let $P$ be a distributive lattice, $\A$ an abelian category and $F \colon P \to \A$ a functor. Then,
    $\Gamma_n F$ is cross-codegree $n$.
\end{theorem*}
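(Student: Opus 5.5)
The plan is to reduce everything to a surjectivity statement in $\A$ and then transport surjectivity from $T_n F$ to its quotient $\Gamma_n F$. Fix a strongly bicartesian $(n+1)$-cube $\X \colon \Po([n]) \to P$ and write $\X_i = \X([n]\setminus\{i\})$ for the maximal proper faces and $\X_{[n]}$ for the terminal vertex.

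\textbf{Step 1: total cofibers as cokernels.} First I would show that, for any cube $\Yc \colon \Po([n]) \to \A$ in an abelian category, the total cofiber is
\begin{equation*}
\tcofib(\Yc) \cong \coker\Bigl(\bigoplus_{i=0}^{n} \Yc([n]\setminus\{i\}) \longrightarrow \Yc([n])\Bigr),
\end{equation*}
or equivalently the cokernel of $\colim_{S \subsetneq [n]} \Yc(S) \to \Yc([n])$. The argument is by induction on $n$, using that the total cofiber is an iterated cofiber (cokernel) taken one direction at a time. Cokernels are right exact and commute with one another, so the iterated cokernel is the quotient of $\Yc([n])$ by the sum of the images of all the faces. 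Consequently, a cube has vanishing total cofiber if and only if the induced map $\bigoplus_i \Yc([n]\setminus\{i\}) \to \Yc([n])$ is an epimorphism. I expect this identification to be the main technical point. Everything else is formal, but one must check that the paper's definition of total cofiber in a pointed cocomplete category is the non-derived, iterated-cokernel one. If instead it is phrased as the cofiber of the map out of the colimit of the punctured cube, the identification is immediate.

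\textbf{Step 2: the input from poset cocalculus.} By the results recalled from \cite{hem2025posetfunctorcocalculusapplications}, $T_n F$ is codegree $n$. This means that it sends strongly cocartesian $(n+1)$-cubes to cocartesian cubes, that is, $\colim_{S\subsetneq[n]} T_nF(\X(S)) \to T_nF(\X_{[n]})$ is an isomorphism. A strongly bicartesian cube is in particular strongly cocartesian, so this applies to $\X$. Hence the map
\begin{equation*}
\bigoplus_i T_nF(\X_i) \to T_nF(\X_{[n]})
\end{equation*}
is an epimorphism. In other words, $T_nF$ is cross-codegree $n$; this is the sense in which the cross-codegree condition is weaker than the codegree condition.

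\textbf{Step 3: passing to the image.} By definition $\Gamma_n F = \Ima(T_nF \to F)$, and images in $\Fun(P,\A)$ are computed pointwise. So there is a natural epimorphism $q \colon T_nF \twoheadrightarrow \Gamma_nF$, which gives a commutative square
\begin{equation*}
\begin{tikzcd}
\bigoplus_i T_nF(\X_i) \arrow[r, two heads] \arrow[d, "\oplus q"'] & T_nF(\X_{[n]}) \arrow[d, two heads, "q"] \\
\bigoplus_i \Gamma_nF(\X_i) \arrow[r] & \Gamma_nF(\X_{[n]}).
\end{tikzcd}
\end{equation*}
The top-then-right composite is an epimorphism, being a composite of two epimorphisms. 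This composite equals left-then-bottom, so the bottom map is an epimorphism. By Step 1, $\tcofib(\Gamma_nF\circ\X)=0$. Since $\X$ was an arbitrary strongly bicartesian $(n+1)$-cube, $\Gamma_nF$ is cross-codegree $n$.
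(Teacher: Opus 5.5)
Your proof is correct, but it is organized differently from the paper's. The paper's proof never looks at an individual cube. It invokes \autoref{lemma:cross_effect_A} to reduce the claim to $\Cr_n(\Gamma_n F)\cong 0$. It then shows that $T_n\Gamma_nF\to\Gamma_nF$ is an epimorphism, via the naturality square of $\varepsilon_n$ applied to $T_nF\twoheadrightarrow\Gamma_nF$ together with the idempotence $T_nT_nF\cong T_nF$. The cube-level work is hidden inside \autoref{lemma:cross_effect_A}, which uses that $T_nF$ is cross-codegree $n$ and computes the total cofiber of the $(n+2)$-cube $T_nF\circ\X\to F\circ\X$ in two ways.

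You instead run the same ``epimorphisms pass to quotients'' diagram chase directly on each cube. For this you use the description of $\tcofib$ as the cokernel of the map out of the sum of the maximal proper faces. That description is exactly \autoref{lemma:tocofib_coprod}, so your Step 1 is already available and is not a real obstacle.

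What your route buys: it is more elementary, needing neither the cocross effect nor idempotence of $T_n$. It also proves a more general fact: any objectwise quotient of a cross-codegree $n$ functor into an abelian category is again cross-codegree $n$. What the paper's route buys: the stronger intermediate statement $\Cr_n(\Gamma_nF)\cong 0$, valid on an arbitrary distributive lattice. There this is not a priori implied by cross-codegree $n$, since the converse \autoref{lemma:cross_effect_B} needs a join-factorization lattice. The paper's route also fits the proof into the cross-effect framework that it reuses for Theorem B.

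One small inaccuracy in Step 2: in the paper, codegree $n$ means sending strongly \emph{bicartesian} $(n+1)$-cubes to cocartesian cubes, not strongly cocartesian ones. Since your $\X$ is strongly bicartesian, the definition applies directly and the step is unaffected. The sentence appealing to ``strongly cocartesian'' should simply be dropped.
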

\begin{theorem*}[\autoref{thm:theoremB}]
    Let $P$ be a join-factorization lattice, $\A$ an abelian category, and $F \colon P \to \A$ a functor.
    If $F$ is cross-codegree $n$, then $\Gamma_n F \cong F$.
\end{theorem*}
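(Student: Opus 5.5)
The plan is to reduce the statement to two facts: $\Gamma_n F$ is a subfunctor of $F$, and $F$ is a quotient of $T_n F$ under the cross-codegree hypothesis. Since $\Gamma_n F = \Ima(T_n F \to F)$, we have $\Gamma_n F \cong F$ (via the canonical monomorphism $\Gamma_n F \rightarrowtail F$) exactly when the counit $\epsilon_F \colon T_n F \to F$ is objectwise an epimorphism. So we must show: if $F$ is cross-codegree $n$, then for every $x \in P$ the map $(T_n F)(x) \to F(x)$ is surjective.

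The first step is to recall the explicit construction of $T_n F$ from poset cocalculus \cite{hem2025posetfunctorcocalculusapplications}. On a join-factorization lattice, $(T_n F)(x)$ is a colimit of $F$ over a diagram of elements below $x$ that are joins of at most $n$ join-irreducibles. Equivalently, it is the left Kan extension of $F$ restricted to the subposet $P_{\le n}$ of elements with at most $n$ join-irreducible factors. The counit at $x$ is induced by the maps $F(y) \to F(x)$ for $y \le x$, $y \in P_{\le n}$. Surjectivity therefore amounts to showing that $F(x)$ is jointly covered by the images of $F(y) \to F(x)$ over $y \le x$ with $y \in P_{\le n}$.

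The second step is an induction on the number $k$ of join-irreducible factors of $x$. If $k \le n$, then $x$ itself lies in $P_{\le n}$ and there is nothing to prove. If $k \ge n+1$, write $x = a_0 \vee \dots \vee a_n \vee r$, grouping the factors into $n+1$ nonempty blocks. Form the $(n+1)$-cube $\X(S) = \bigvee_{i\in S} a_i \vee r$, or a suitable variant. By join-factorization and distributivity this cube should be strongly bicartesian, since meets of disjoint blocks reduce to the common part. The key observation is that vanishing total cofiber implies that the map from the colimit over the punctured cube, $\colim_{S \subsetneq [n]} F(\X(S)) \to F(x)$, is an epimorphism. This holds because the total cofiber is the cokernel of this map up to the higher terms, and in an abelian category the rightmost part of the cofiber sequence gives surjectivity. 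Each $\X(S)$ with $S$ proper has fewer factors than $x$, so by induction each $F(\X(S))$ is covered by $F(y)$ with $y \in P_{\le n}$. Composing, $F(x)$ is covered as well.

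I expect the main obstacle to be choosing the cube so that it is genuinely strongly bicartesian in the sense of \autoref{sec:prelims}, while every proper face still has strictly smaller factor count. Grouping the join-irreducible factors into $n+1$ blocks, with the base $r$ being the meet-complement, should work given unique join-factorization. I would verify the pushout squares on two-dimensional faces using distributivity. A secondary point is confirming that the cokernel of the punctured-colimit map agrees with the total cofiber in degree zero, which needs only right exactness of colimits in $\A$.
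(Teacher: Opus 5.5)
Your reduction (it suffices that $T_nF(x)\to F(x)$ be epi for every $x$), the induction on $\jdim(x)$, and the epimorphism chase in the induction step are sound. However, two claims you make about the cube are false as stated.

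First, $\X(S)=\bigvee_{i\in S}a_i\vee r$ always preserves joins, so the pushout condition you plan to check is automatic. What can fail is preservation of meets, which by distributivity requires $a_i\wedge a_j\le r$ for all $i\neq j$, and distinct join-irreducible factors of $x$ can meet nontrivially. Take the lattice of down-sets of the poset with three minimal elements $c_1,c_2,c_3$, each below both of two maximal elements $p_0,p_1$. The top element $x$ is the join of the join-irreducibles $a=\{c_1,c_2,c_3,p_0\}$ and $b=\{c_1,c_2,c_3,p_1\}$, but $a\wedge b=\{c_1,c_2,c_3\}$. So for $n=1$ the square with vertices $\emptyset,a,b,x$ (i.e.\ $r$ the empty join) is not cartesian, and the cross-codegree hypothesis says nothing about it. The fix is $r=\bigvee_{i\neq j}(a_i\wedge a_j)$. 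Alternatively, as in the paper, take the meet-defined cube $\X_{x^0,\dots,x^n}$ of the pairwise cover $x^i=\bigvee_{j\neq i}a_j$, which is strongly bicartesian by \autoref{lem:bicartesian_from_codecomp}.

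Second, the same example shows that the obstacle you set yourself cannot be met. The bottom vertex $a\wedge b$ of the correct square has three parents, so $\jdim(a\wedge b)=3>2=\jdim(x)$ by \autoref{lemma:num_of_parents}. Hence your claim that every proper face has fewer factors fails. It is also unnecessary. By \autoref{lemma:tocofib_coprod}, $\tcofib(F\circ\X)\cong 0$ says exactly that $\bigoplus_i F(x^i)\to F(x)$ is epi, so only the top faces enter. Each satisfies $\jdim(x^i)\le m-1$ simply because $x^i$ is a join of at most $m-1$ join-irreducibles, so uniqueness of the decomposition is not needed either.

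With these repairs your proof is complete, and a little more economical than the paper's. The paper deduces the theorem from \autoref{lemma:cross_effect_B}. Its induction step, with $y_i=\bigvee_{j\neq i}p_j$, computes the total cofiber of the $(n+2)$-cube $T_nF\circ\X_{y_0,\dots,y_n}\to F\circ\X_{y_0,\dots,y_n}$ in two ways. This uses that $T_nF$ is codegree $n$ (\autoref{theoremA_cat}) and that total cofibers can be formed in either order. Your chase combines three facts: $\bigoplus_iF(x^i)\twoheadrightarrow F(x)$ by hypothesis, $T_nF(x^i)\twoheadrightarrow F(x^i)$ by induction, and naturality of $T_nF\to F$. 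It needs only the colimit formula for $T_nF$ and the hypothesis on $F$. Read with ``$g\circ f=0$ implies $g=0$'' in place of ``epi'', it even gives $\Cr_nF\cong 0$ in the generality of \autoref{lemma:cross_effect_B}. The paper's version, phrased entirely through total cofibers of cubes, fits the cross-effect formalism it is modelled on.
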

\begin{proposition*}[\autoref{prop:universal_prop_cross}]
    Let $P$ be a join-factorization lattice, $\A$ an abelian category, and $F \colon P \to \A$ a functor.
    Then, every natural transformation $\alpha \colon G \to F$ from a cross-codegree $n$ functor $G$ factors uniquely through $\Gamma_nF \to F$.
\end{proposition*}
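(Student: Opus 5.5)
Uniqueness is immediate: $\Gamma_n F \to F$ is a monomorphism (the image inclusion $\Ima(T_nF\to F)\hookrightarrow F$), so any two factorizations of $\alpha$ through it must coincide. The real content is existence, and the plan is to use naturality of the approximation $T_n F \to F$ and functoriality of $\Gamma_n$.

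Given $\alpha\colon G\to F$, naturality of $T_n(-)\to \id$ gives a commutative square
\begin{equation*}
\begin{tikzcd}
T_n G \arrow[r] \arrow[d, "T_n\alpha"'] & G \arrow[d, "\alpha"] \\
T_n F \arrow[r] & F,
\end{tikzcd}
\end{equation*}
and passing to images yields an induced map $\Gamma_n\alpha\colon \Gamma_n G = \Ima(T_nG\to G) \to \Ima(T_nF\to F) = \Gamma_n F$. This map is compatible with the inclusions into $G$ and $F$. Concretely, the composite $T_nG \to T_nF \to F$ factors through the image $\Gamma_nF$, and the epimorphism $T_nG \twoheadrightarrow \Gamma_nG$ has kernel $\ker(T_nG\to G)$, which maps to zero in $F$. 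Since $\Gamma_n F\to F$ is monic, that kernel also maps to zero in $\Gamma_nF$, so we get the factorization.

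Now use \autoref{thm:theoremB}. Since $G$ is cross-codegree $n$ and $P$ is a join-factorization lattice, the map $\Gamma_n G\to G$ is an isomorphism. It is a monomorphism by construction, and \autoref{thm:theoremB} says precisely that this canonical comparison is an iso. Define the factorization as
\begin{equation*}
\beta = \Gamma_n\alpha\circ(\Gamma_nG\to G)^{-1}\colon G\to \Gamma_nF.
\end{equation*}
Composing with $\Gamma_nF\to F$ gives $\alpha\circ(\Gamma_nG\to G)\circ(\Gamma_nG\to G)^{-1}=\alpha$, as required.

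The main point to check is that \autoref{thm:theoremB} provides the isomorphism $\Gamma_nG\cong G$ via the canonical inclusion, and not just some abstract isomorphism. If it only gives an abstract iso, I would argue instead that the map $T_nG\to G$ is an epimorphism. This should follow from the proof of \autoref{thm:theoremB}, which presumably shows the cokernel of $T_nG\to G$ vanishes using the cross-codegree condition. An epimorphism makes the image inclusion $\Gamma_nG\hookrightarrow G$ an isomorphism, and the rest of the argument is formal.
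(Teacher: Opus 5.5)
Your proof is correct, but it takes a different route from the paper. The paper derives the statement in one line from \autoref{prop:adjoints}. That proposition in turn rests on three things: the idempotent comonad structure of $\Gamma_n$ (\autoref{prop:idempotent_monad}); the general fact that an idempotent comonad is a coreflector onto its essential image; and \autoref{thm:theoremA} together with \autoref{thm:theoremB}, which identify that essential image with $\Fun_{\Cr_n=0}(P,\A)$. The factorization is then the universal property of the counit.

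You instead build the factorization by hand. Uniqueness holds because $\Gamma_nF\to F$ is monic, and existence comes from $\Gamma_n\alpha\circ(\Gamma_nG\to G)^{-1}$. Your worry about a canonical versus an abstract isomorphism is settled exactly as you suggest. The proof of \autoref{thm:theoremB} shows, via \autoref{lemma:cross_effect_B}, that $\coker(T_nG\to G)\cong\Cr_nG\cong 0$. So $T_nG\to G$ is an epimorphism and the image inclusion $\Gamma_nG\to G$ is invertible.

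Your approach is more economical and transparent. It shows that the factorization property needs neither the comonad structure nor \autoref{thm:theoremA}, only that $T_nG\to G$ is epi; that is where the join-factorization hypothesis enters. The paper's approach gives a stronger conclusion: $\Gamma_n$ is right adjoint to the inclusion. Equivalently, $\Gamma_nF$ is itself cross-codegree $n$, so $\Gamma_nF\to F$ is terminal among maps into $F$ from cross-codegree $n$ functors. You would recover this by adding \autoref{thm:theoremA}. The two constructions give the same map, since the coreflection factors $\alpha$ as $\Gamma_n\alpha$ composed with the inverse of the counit at $G$, which is invertible.
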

The definition of a join-factorization lattice is given in \autoref{sec:prelims}, and includes, in particular, finite distributive lattices.

We also dualize all the definitions and results above, giving rise to the \emph{cross-poset calculus}. In particular, the universal cross-degree $n$ approximation of a functor $F$ is denoted $\Gamma^n F$.
We prove several interesting properties of the $\Gamma_n$ and $\Gamma^n$ functors, including the following.
\begin{theorem*}[\autoref{thm:dist_law}]
    Let $P$ be a distributive lattice, $\A$ an abelian category and $F \colon P \to \A$ a functor. For any $m,n \ge 0$,
    \begin{equation*}
        \Gamma_m \Gamma^n F \cong \Gamma^n \Gamma_m F.
    \end{equation*}
    Furthermore, this isomorphism is natural in $F$.
\end{theorem*}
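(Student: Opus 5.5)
We must show $\Gamma_m\Gamma^n F\cong\Gamma^n\Gamma_m F$, naturally in $F$. We use the explicit descriptions $\Gamma_m F=\Ima(T_mF\to F)$ and, dually, $\Gamma^n F=\Ima(F\to T^nF)$, where $T^n$ is the degree $n$ approximation of poset calculus. Both images are computed pointwise in the abelian category $\A$. The strategy is to identify both sides with a single image-type object built from $F$, symmetric in $m$ and $n$. The natural candidate is the image of the composite
\begin{equation*}
T_mF\longrightarrow F\longrightarrow T^nF .
\end{equation*}

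First we need the facts about $T_m$ and $T^n$ in the abelian setting. Pointwise, $T_mF$ is built from $F$ by finite colimits, namely left Kan extension / cokernel-type constructions over a finite diagram of lattice elements. So $T_m$ is a right exact functor $\Fun(P,\A)\to\Fun(P,\A)$. If it is in fact expressible as a finite direct sum/cokernel of evaluations at joins, it is additive and preserves epimorphisms. Dually, $T^n$ is left exact and preserves monomorphisms.

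\textbf{Step 1.} Compute $\Gamma_m\Gamma^nF=\Ima(T_m\Gamma^nF\to\Gamma^nF)$. Apply $T_m$ to the epimorphism $F\twoheadrightarrow\Gamma^nF$. Since $T_m$ preserves epimorphisms, $T_mF\to T_m\Gamma^nF$ is epi. By naturality of $T_m\Rightarrow\id$, the square with $T_mF\to F$ and $T_m\Gamma^nF\to\Gamma^nF$ commutes. Hence
\begin{equation*}
\Ima(T_m\Gamma^nF\to\Gamma^nF)=\Ima(T_mF\to F\to\Gamma^nF).
\end{equation*}
Since $\Gamma^nF\hookrightarrow T^nF$ is mono, this equals $\Ima(T_mF\to F\to T^nF)$.

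\textbf{Step 2 (dual).} Apply $T^n$ to the mono $\Gamma_mF\hookrightarrow F$. Since $T^n$ preserves monos, $T^n\Gamma_mF\to T^nF$ is mono. Therefore
\begin{equation*}
\Gamma^n\Gamma_mF=\Ima(\Gamma_mF\to T^n\Gamma_mF)\cong\Ima(\Gamma_mF\to T^nF).
\end{equation*}
Precomposing with the epi $T_mF\twoheadrightarrow\Gamma_mF$ shows this is $\Ima(T_mF\to F\to T^nF)$.

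Both isomorphisms come from image factorizations of natural maps, so they are natural in $F$. Uniqueness of image factorizations then gives the natural isomorphism $\Gamma_m\Gamma^nF\cong\Gamma^n\Gamma_mF$, compatible with the maps to $T^nF$ and from $T_mF$.

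The main obstacle is the exactness input: $T_m$ must preserve epimorphisms and $T^n$ must preserve monomorphisms on $\Fun(P,\A)$. I would first try to extract this from the explicit formula for $T_m$ in \cite{hem2025posetfunctorcocalculusapplications}, where $T_mF(x)$ is a colimit over a finite punctured cube or poset of joins. Colimits of diagrams in an abelian category are right exact in the diagram, so epimorphisms are preserved; dually, limits preserve monomorphisms. If $P$ is infinite, this colimit is still pointwise over a finite indexing diagram, so the argument is unaffected. A secondary check is that the paper's dual $\Gamma^n$ really is $\Ima(F\to T^nF)$ with $F\to\Gamma^nF$ epi, which holds by dualizing the $\Gamma_n$ construction.
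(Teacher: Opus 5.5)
Your proof is correct, but it takes a different route from the paper. In the notation of the statement, the paper never identifies a common value. Instead it uses four facts. First, $\Gamma_m$ is an idempotent comonad with monic counit $\bar\varepsilon$ (\autoref{prop:idempotent_monad}). Second, the unit $\bar\eta\colon \id\to\Gamma^n$ is epic. Third, $\Gamma_m$ and $\Gamma^n$ preserve monomorphisms and epimorphisms (\autoref{prop:preserve_mono_epi}). Fourth, two commutative squares then show that $\Gamma_m\Gamma^n\bar\varepsilon_F\colon\Gamma_m\Gamma^n\Gamma_mF\to\Gamma_m\Gamma^nF$ and $\bar\varepsilon_{\Gamma^n\Gamma_mF}\colon\Gamma_m\Gamma^n\Gamma_mF\to\Gamma^n\Gamma_mF$ are both monic and epic, hence isomorphisms.

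You instead unwind the definitions and use only three things: $T_m$ preserves epimorphisms, $T^n$ preserves monomorphisms, and an image is unchanged by precomposing with an epimorphism or postcomposing with a monomorphism. This avoids the (co)monad structure altogether and gives more information. Both sides are exhibited as epi--mono factorizations of the single composite $T_mF\to F\to T^nF$. That makes the symmetry in $m,n$ transparent, and likewise naturality in $F$ and compatibility with the canonical maps from $T_mF$ and to $T^nF$. The paper's argument, for its part, is purely formal in $\Gamma_m$ and $\Gamma^n$, using them only through the structural properties listed above.

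On what you call the main obstacle: it is not really one. The paper notes right after \eqref{eq:T_n_cat} that $T_m=\Lan_i\circ i^*$ commutes with colimits, so it preserves cokernels and hence epimorphisms. Dually, $T^n$ preserves kernels and hence monomorphisms; this is exactly what \autoref{prop:preserve_mono_epi} uses. Your claim that for infinite $P$ the colimit defining $T_mF(x)$ is still indexed by a finite diagram is false in general. In the real unit square $[0,1]^2\subseteq\R^2$, every $(a,0)$ is join-irreducible, so $\{v\in P_{\le 1} : v\le(1,1)\}$ is infinite. Nothing in your argument depends on finiteness, however, since colimits of any shape preserve objectwise epimorphisms and limits preserve objectwise monomorphisms.
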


We further explore connections of cross-poset calculus to multipersistence. We prove the following two results, showing a connection between cross-poset calculus and projective resolution.
\begin{theorem*}[\autoref{thm:pdim_thm1}]
    Let $P$ be a finite distributive lattice of dimension $n$, with $n \ge 1$, and $F \colon P \to \Vecs_{\Fb}$ a finitely generated persistence module. Then the following are equivalent.
    \begin{enumerate}
        \item $F$ has projective dimension at most $n-1$.
        \item $F$ is cross-degree $n-1$.
        \item $F \cong \Gamma^{n-1} F$.
    \end{enumerate}
\end{theorem*}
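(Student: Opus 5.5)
The equivalence (2)$\Leftrightarrow$(3) follows from the dual forms of \autoref{thm:theoremA} and \autoref{thm:theoremB}. A finite distributive lattice is a join-factorization lattice, so the dual of \autoref{thm:theoremB} shows that a cross-degree $n-1$ functor satisfies $F \cong \Gamma^{n-1}F$. Conversely, the dual of \autoref{thm:theoremA} shows that $\Gamma^{n-1}F$ is cross-degree $n-1$, and this property is invariant under isomorphism. So the real content is (1)$\Leftrightarrow$(2).

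For (1)$\Leftrightarrow$(2) I would use Birkhoff's representation to identify $P$ with the lattice of down-sets of a poset $J$ of join-irreducibles. I would read "dimension $n$" as: $n$ is the largest size of an antichain in $J$, equivalently the largest $m$ for which $P$ contains a strongly bicartesian $m$-cube. Projective dimension over the incidence algebra of $P$ is computed by $\operatorname{Ext}^n(F,S_x)$, or dually by minimal projective resolutions. The minimal resolution at a vertex $x$ is governed by the Koszul-type complex built from the elements covering $x$ (or covered by $x$). The key claim is the following. The $n$-th term of the minimal projective resolution of $F$ is nonzero at a generator $x$ if and only if some strongly bicartesian $n$-cube $\X$, with terminal vertex $x$ and initial vertex the meet of $n$ elements covered by $x$, has total fiber of $F\circ\X$ nonzero, computed via the dual cubes. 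I would prove this in two steps. First, express the $n$-th Betti number at $x$ through the homology of the Koszul complex $\bigoplus_{S\subseteq \mathrm{cov}(x)} F(\bigwedge S)$, following \cite{realisationsposetstameness}. Second, observe that when $|\mathrm{cov}(x)|=n$, the top Koszul homology is exactly the total fiber, a kernel, of the corresponding $n$-cube. Because the dimension is $n$, no vertex has more than $n$ lower covers, so the $n$-th Koszul homology is supported only on such top cubes.

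Cross-degree $n-1$ requires vanishing total fibers for all strongly bicartesian $n$-cubes, not only those built from covers. The remaining step is to reduce a general strongly bicartesian $n$-cube to these elementary ones. I would do this by decomposing each edge into cover chains. Total fibers are compatible with composing cubes along a direction: there is a long exact sequence, or a pasting lemma, relating the total fibers of two cubes glued along a face. So vanishing on all elementary (cover) cubes implies vanishing on all cubes. For the converse, the elementary cubes are themselves strongly bicartesian, so vanishing on all cubes gives vanishing on the elementary ones. The pasting step needs care: in dimension exactly $n$ there is no room for higher cubes, so the top total fiber is left exact and I expect gluing to preserve vanishing in one direction. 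The other direction is more delicate.

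I expect the main obstacle to be the Koszul identification, specifically the claim that projective dimension $<n$ is equivalent to vanishing of the top Koszul homology at every $x$. This relies on the minimal resolution having length at most $n$, which fixes the top degree, and on finite generation, so that Betti numbers are finite and detected by $\operatorname{Tor}$ with simples. Here I would compute $\operatorname{Tor}_n(F, S_x)$ via the Koszul resolution of the simple $S_x$, which in a distributive lattice is the Boolean-cube complex on the covers of $x$.
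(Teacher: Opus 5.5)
Your proposal is correct, but for (1)$\Rightarrow$(2) it takes a different route from the paper. For (2)$\Leftrightarrow$(3) and (2)$\Rightarrow$(1) it matches the paper: Theorems A and B, then $(\beta^nF)_a\cong H_n(K_{F\circ\PC_a})$. This term vanishes in degree $n$ when $\jdim(a)<n$ and equals $\tfib(F\circ\PC_a)$ when $\jdim(a)=n$, and no vertex has more than $n$ parents. (The dual half of \autoref{thm:theoremB} needs a meet-factorization lattice, not a join-factorization one, but a finite distributive lattice is both.)

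\emph{The paper's route for (1)$\Rightarrow$(2).} The paper never subdivides. A strongly bicartesian $\X\colon\Po([n-1])\to P$ preserves meets, so restriction along $\X$ preserves projectives (\autoref{lemma:meet_preserving_projectives}, since $\Ran_\X$ preserves epimorphisms). Hence $F\circ\X$ has projective dimension at most $n-1$ over $\Po([n-1])$. The parent cube of its top vertex $[n-1]$ is the identity cube, so $\tfib(F\circ\X)\cong H_n(K_{F\circ\X})\cong(\beta^n(F\circ\X))_{[n-1]}=0$.

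\emph{Your route.} You extract vanishing only on parent cubes from $\beta^nF=0$ and then propagate it inside $P$. This works; the vague points can be made precise as follows.
\begin{enumerate}
\item Set $b=\X(\emptyset)$ and $c_i=\X(\{i\})$. If some $c_i=b$, then $\tfib(F\circ\X)=0$ trivially.
\item Otherwise, distributivity and $c_i\wedge c_j=b$ give a lattice isomorphism $\prod_i[b,c_i]\cong[b,\X([n-1])]$, $(y_i)\mapsto\bigvee_i y_i$. Maximal chains in the $[b,c_i]$ therefore cut $\X$ into a grid of injective strongly bicartesian cubes whose edges are all covers.
\item The top vertex $t$ of each small cube has $n$ distinct lower covers, hence exactly $n$ by the dimension bound. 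So \autoref{lem:codecomp_from_bicartesian} identifies the small cube with $\PC_t$ up to reordering coordinates.
\item The gluing step is the easy direction and holds in any abelian category. If $\X$ is the composite of $\Yc_0\Rightarrow\Yc_1$ and $\Yc_1\Rightarrow\Yc_2$ along one direction, then $\tfib(F\circ\X)=\ker(gf)$ for the induced maps $f\colon\tfib(F\circ\Yc_0)\to\tfib(F\circ\Yc_1)$ and $g\colon\tfib(F\circ\Yc_1)\to\tfib(F\circ\Yc_2)$. Then $\ker f=\ker g=0$ forces $\ker(gf)=0$.
\end{enumerate}
The ``more delicate'' other direction is never needed. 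The dimension hypothesis enters only in step 3, not through any left-exactness of top total fibers.

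\emph{Trade-off.} Your argument gives a direct, lattice-theoretic proof that, in dimension $n$, cross-degree $n-1$ can be tested on parent cubes alone. The paper's argument is shorter and shows that projective dimension at most $m$ implies cross-degree $m$ with no assumption on the dimension of $P$.
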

\begin{theorem*}[\autoref{thm:pdim_thm2}]
    Let $P$ be a finite distributive lattice of dimension $n$, with $n \ge 2$, and $F \colon P \to \Vecs_{\Fb}$ a finitely generated persistence module. Then the following are equivalent.
    \begin{enumerate}
        \item $F$ has projective dimension at most $n-2$.
        \item $F$ is degree $n-1$ and cross-degree $n-2$.
        \item $F \cong T^{n-1} \Gamma^{n-2} F$.
    \end{enumerate}
\end{theorem*}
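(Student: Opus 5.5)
The plan is to reduce everything to the previous theorem (\autoref{thm:pdim_thm1}), applied both to $F$ and to its first syzygy. The bridge is a known fact about $T^{n-1}$ on a finite distributive lattice of dimension $n$: by the results of the poset calculus paper, a module is degree $n-1$ if and only if the total fiber of $F\circ\X$ vanishes for every strongly cocartesian $n$-cube $\X$. For $P$ of dimension $n$ (a sublattice of a product of $n$ chains), the only nondegenerate such cubes are, locally, the unit $n$-cubes spanned by $n$ independent join-irreducible steps. The Koszul-complex description of minimal resolutions from \cite{realisationsposetstameness} then gives the translation we need. At each $p\in P$, the top homological degree $n$ of the minimal resolution is measured by the top Koszul homology at $p$. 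For a suitable cube, this is exactly the total fiber $\tfib(F\circ\X)$, since the Koszul complex on $n$ commuting steps is the total complex of the cube. Hence $F$ is degree $n-1$ if and only if $F$ has no projective summand in homological degree $n$, i.e.\ if and only if $\mathrm{pd} F\le n-1$, with the conventions suitably dualised. I expect to prove this as a preliminary lemma if it is not already available.

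\textbf{(1)$\Rightarrow$(2).} If $\mathrm{pd}F\le n-2$, then in particular $\mathrm{pd}F\le n-1$, so $F$ is cross-degree $n-1$ by \autoref{thm:pdim_thm1}. The cross-degree $n-2$ condition says that total fibers vanish on strongly bicartesian $(n-1)$-cubes. I will interpret these as Koszul homology in degree $n-1$ along $n-1$ of the directions. Vanishing of $\beta_{n-1}$ and $\beta_n$ should then give this.

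A cleaner route is via a syzygy. Take $0\to K\to Q\to F\to 0$ with $Q$ projective and the cover minimal. Then $\mathrm{pd}K=\mathrm{pd}F-1$, and dimension shifting relates cross-degree of $F$ at level $n-2$ to cross-degree of $K$ at level $n-1$ together with a degree condition coming from the top total fiber. I expect the precise statement to be: $F$ is cross-degree $n-2$ and degree $n-1$ if and only if $K$ is cross-degree $n-1$. Combined with \autoref{thm:pdim_thm1} applied to $K$, this gives (1)$\Leftrightarrow$(2).

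The key computation compares total fibers along cubes using the long exact sequence of total fibers in an abelian category. Projectives (sums of representables) are both degree $0$ and cross-degree $0$, so their total fibers vanish on every relevant cube of dimension $\ge1$. Hence $\tfib_i(F\circ\X)\cong\tfib_{i-1}(K\circ\X)$. The subtle point is that a strongly bicartesian $(n-1)$-cube in $F$ corresponds to a face of an $n$-cube. The degree $n-1$ hypothesis is what kills the extra homology appearing in the top dimension. I expect this step to be the main obstacle: matching exactly which cubes are strongly bicartesian versus merely strongly cocartesian, in a lattice of dimension $n$.

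\textbf{(2)$\Leftrightarrow$(3).} Assume (2). Since $F$ is cross-degree $n-2$, the dual of \autoref{thm:theoremB} gives $\Gamma^{n-2}F\cong F$. Since $F$ is degree $n-1$, the universal property of $T^{n-1}$ from poset calculus gives $T^{n-1}F\cong F$. Together these yield $F\cong T^{n-1}\Gamma^{n-2}F$.

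Conversely, $T^{n-1}G$ is always degree $n-1$. We also need that $T^{n-1}$ preserves cross-degree $n-2$, so that $T^{n-1}\Gamma^{n-2}F$ is still cross-degree $n-2$. I would prove this from the explicit (co)limit formula for $T^{n-1}$: it is a finite limit over subcubes, and finite limits commute with the finite limits defining total fibers. This argument mirrors the proof of \autoref{thm:dist_law}. Finite generation of $F$ and finiteness of $P$ ensure all these constructions stay in finitely generated modules.
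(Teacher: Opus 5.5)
\textbf{Verdict: there is a genuine gap in (1)$\Leftrightarrow$(2).} Your treatment of (2)$\Leftrightarrow$(3) matches the paper. For $T^{n-1}$ preserving cross-degree $n-2$, you can cite \autoref{lemma:Tn_preserve_cross_degree}, which is proved via cross effects. Your limit-commutation sketch is not a proof as stated, because the indexing poset $\{v\in P^{\le n-1}: v\ge x\}$ of $T^{n-1}F(x)$ changes with the vertex $x$. In (1)$\Leftrightarrow$(2), both of the mechanisms you propose are false.

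First, your ``bridge'' conflates degree with cross-degree. Vanishing of $\tfib(F\circ\X)$ says only that $F(\X(\emptyset))\to\lim_{S\neq\emptyset}F(\X(S))$ is a monomorphism, whereas degree $n-1$ requires an isomorphism. By \autoref{thm:pdim_thm1}, $\mathrm{pd}\,F\le n-1$ is equivalent to cross-degree $n-1$, which is strictly weaker than degree $n-1$. Take $\{0,1\}^2$ and the module $M$ with $M(0,0)=0$ and $\Fb$ elsewhere, with identity maps. It has the resolution $0\to[(1,1),-)\to[(1,0),-)\oplus[(0,1),-)\to M$, so $\mathrm{pd}\,M=1$. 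But $M(0,0)=0\to M(1,0)\times_{M(1,1)}M(0,1)\cong\Fb$ is not an isomorphism, so $M$ is not degree $1$.

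Second, the syzygy reduction runs the wrong way. $\mathrm{pd}\,F\le n-2$ corresponds to $\mathrm{pd}\,K\le n-3$, but \autoref{thm:pdim_thm1} applied to $K$ detects only $\mathrm{pd}\,K\le n-1$, i.e.\ $\mathrm{pd}\,F\le n$. That always holds, since parent cubes have dimension at most $n$. So your expected equivalence ``$F$ is degree $n-1$ and cross-degree $n-2$ iff $K$ is cross-degree $n-1$'' would force every $F$ to satisfy (2). It already fails for the simple module $S$ at $(0,0)$ in $\{0,1\}^2$. Its first syzygy is exactly the $M$ above, which is cross-degree $0$, yet $S$ is neither cross-degree $0$ nor degree $1$. (Separately, representables are not degree $0$: that would require all their structure maps to be isomorphisms.)

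What is missing is the direct Koszul argument.

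For (2)$\Rightarrow$(1), apply \autoref{thm:betti_diag} at each $a\in P$.
\begin{itemize}
\item If $\jdim(a)=n-1$, then $H_{n-1}(K_{F\circ\PC_a})\cong\tfib(F\circ\PC_a)=0$ by cross-degree $n-2$.
\item If $\jdim(a)=n$, \autoref{lemma:cube_colimits_middle_exact} identifies $(K_{F\circ\PC_a})_n\to\ker\partial_{n-1}$ with the cartesian comparison map of $F\circ\PC_a$. Degree $n-1$ therefore kills both $H_n$ and $H_{n-1}$. This is exactly where you need an isomorphism rather than mere injectivity.
\end{itemize}

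For (1)$\Rightarrow$(2), a general strongly bicartesian cube is not a parent cube, since its edges need not be covers. So ``Koszul homology along $n-1$ directions'' does not reach it. The paper instead uses \autoref{lemma:meet_preserving_projectives}: $\X$ preserves meets, so $F\circ\X$ has projective dimension at most $n-2$ over $\Po([n-2])$ (resp.\ $\Po([n-1])$). The Betti numbers at the top of the cube then give:
\begin{itemize}
\item for $(n-1)$-cubes, $\tfib(F\circ\X)\cong H_{n-1}(K_{F\circ\X})=0$;
\item for $n$-cubes, $H_n=H_{n-1}=0$, which yields cartesianness via \autoref{lemma:cube_colimits_middle_exact}.
\end{itemize}
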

Here the dimension of a poset is the minimal
number of total orders such that the poset embeds into their product.

We conclude with examples of applications where multipersistence modules with low cross-degree or cross-codegree arise. In particular, we mention an image analysis application where the bipersistence module produced is cross-degree 1, and we apply the preceding theorems to conclude that the module thus has projective dimension at most 1.

\subsection{Organization}

In section 2, we give the necessary preliminaries on poset cocalculus. In section 3, we review preliminaries on total fibers and total cofibers of cubes, and provide a formula for their computation.
In section 4, we develop the theory of \emph{cross-poset (co)calculus}, a new theory of functor (co)calculus for functors out of posets, and prove several results describing its main properties and the relation to the existing theory of poset cocalculus.
Finally, in section 5, we showcase some applications to multipersistent homology, including proving two theorems on the relation to projective dimension and applying these to concrete examples of multifiltrations.

\subsection{Notation}

We let $[n] = \{0, \dots, n\}$. The power set of a set $S$ is denoted $\Po(S)$, and we consider it as a poset ordered by inclusion.

For two elements $x$ and $y$ in a poset $P$, we let $\langle x,y \rangle = \{z \in P : x \le z \le y\}$.

For two posets $P$ and $Q$, the product partial order on $P \times Q$ is given by \begin{equation*}
(p,q) \le (p',q') \iff p \le p' \textrm{ and } q \le q'.
\end{equation*}
Unless otherwise specified, we endow products of posets with this partial order.

Given a category $\C$ and a small category $I$, we let $\Fun(I, \C)$ denote the category of functors from $I$ to $\C$.
Given categories $\A, \B$ and $\C$, and functors $\alpha \colon \A \to \B$ and $F \colon \A \to \C$, we let $\Lan_{\alpha} F \colon \B \to \C$ denote the left Kan extension of $F$ along $\alpha$, whenever it exists. Similarly, we let $\Ran_\alpha F \colon \B \to \C$ denote the right Kan extension of $F$ along $\alpha$, whenever it exists.

\subsection*{Acknowledgments}

I am grateful to my supervisor, Kathryn Hess, for her guidance, insightful feedback, and many fruitful discussions.
I also thank my colleagues in the topology group at EPFL, for creating a stimulating and supportive research environment.

\section{Preliminaries on poset cocalculus}

\label{sec:prelims}

We present the theory of poset cocalculus, as introduced in \cite{hem2025posetfunctorcocalculusapplications}.
For further examples, especially of poset cocalculus applied to persistence modules, we refer to \cite{hem2025decomposingmultipersistencemodulesusing}.

\subsection{Posets and lattices}

Given two elements $x, y$ of a poset, we denote their \emph{least upper bound}, or \emph{join}, by $x \vee y$ and their \emph{greatest lower bound}, or \emph{meet}, by $x \wedge y$ (whenever they exist). A \emph{lattice} is a poset in which $x \vee y$ and $x \wedge y$ exist for all pairs of elements $(x, y)$.

An element $x$ in a poset $P$ is \emph{minimal} if there is no $y \in P$ such that $y < x$.
We similarly say that an element $x$ in a poset $P$ is \emph{maximal} if there is no $y \in P$ such that $y > x$.

\begin{definition}\label{def:descending_chain}
A poset $P$ satisfies the \emph{descending chain condition} if every nonempty subset of $P$ has a minimal element.

\end{definition}

Finite posets trivially satisfy the descending chain condition.

\subsection{Distributive lattices}

A lattice $P$ is \emph{distributive} if for all elements $x,y,z \in P$,
\begin{equation}\label{eq:dist_lattice}
x \wedge (y \vee z) = (x \wedge y) \vee (x \wedge z).
\end{equation}

\begin{definition}
We say that an element $v$ in a lattice $P$ is \emph{join-irreducible} if $v \neq x \vee y$ for all $x,y < v$.
\end{definition}
\begin{definition}
Let $P$ be a lattice, and let $v \in P$. A \emph{join-decomposition} of $v$ (of size $k$) is a finite collection of elements $p^0, \dots, p^{k-1} \in P$ such that $v = p^0 \vee \dots \vee p^{k-1}$.

\end{definition}
\begin{definition}\label{def:dim}
Let $P$ be a distributive lattice. For an element $v \in P$, we define the \emph{join-dimension} of $v$, denoted $\jdim(v)$, as the minimal $k$ such that $v$ has a size $k$ join-decomposition into join-irreducible elements (i.e., if $v$ can be written as the join of $k$ join-irreducible elements).
\end{definition}
We adopt the convention that if $v$ is minimal, then $\jdim(v) = 0$. Furthermore, if $v$ has no join-decomposition into join-irreducible elements, then $\jdim(v) = \infty$.
For further details on the join-dimension, including equivalent formulations, we refer to \cite[Section 4.2]{hem2025posetfunctorcocalculusapplications}.

The following proposition says that in distributive lattices satisfying the descending chain condition, every element has finite join-dimension.
\begin{proposition}{\cite[Theorem 9, p.\ 142]{Birkhoff}}\label{thm:Birkhoff}
If $P$ is a distributive lattice that satisfies the descending chain condition, then every element of $P$ has a unique minimal join-decomposition into join-irreducible elements.
\end{proposition}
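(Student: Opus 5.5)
We argue in two stages: existence of a decomposition into join-irreducibles, then uniqueness of the minimal one (in the sense of an irredundant decomposition, i.e.\ one where no join-irreducible can be dropped).

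\textbf{Existence.} Fix $v \in P$ and consider the set $S$ of elements $w \le v$ that admit no finite join-decomposition into join-irreducible elements. Suppose $S$ is nonempty. By the descending chain condition (\autoref{def:descending_chain}), $S$ has a minimal element $w$. Now $w$ cannot be join-irreducible, since then $w$ itself would be a size-one decomposition (and a minimal element of $P$ is handled by our convention of an empty decomposition). Hence $w = x \vee y$ with $x,y<w$. By minimality of $w$, neither $x$ nor $y$ lies in $S$, so each is a finite join of join-irreducibles. Concatenating these gives a decomposition of $w$, a contradiction. Thus every element is a finite join of join-irreducibles. Discarding redundant terms, so that no $p^i \le p^j$ for $i\ne j$ and no term can be omitted, yields an irredundant decomposition.

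\textbf{Uniqueness.} The key lemma is that in a distributive lattice a join-irreducible $q$ is join-prime. That is, if $q \le a \vee b$ then
\begin{equation*}
q = q\wedge(a\vee b) = (q\wedge a)\vee(q\wedge b)
\end{equation*}
by \eqref{eq:dist_lattice}. Join-irreducibility then forces $q = q\wedge a$ or $q = q \wedge b$, i.e.\ $q\le a$ or $q \le b$. By induction this extends to finite joins.

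Now suppose $v = p^0\vee\dots\vee p^{k-1} = q^0\vee\dots\vee q^{m-1}$ are two irredundant decompositions. Each $q^j \le \bigvee p^i$, so by primality $q^j \le p^{i}$ for some $i$. Symmetrically, $p^i \le q^{l}$ for some $l$, giving $q^j \le p^i \le q^l$. Irredundancy of the $q$'s forces $j=l$, so $q^j = p^i$. Hence the two sets coincide. In particular, the minimal-size decomposition is unique: any minimal-size decomposition is irredundant, and all irredundant decompositions agree.

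\textbf{Main obstacle.} The only delicate step is join-primality, which is exactly where distributivity enters. One also has to be careful to interpret ``minimal'' as irredundant, so that uniqueness is meaningful up to reordering.
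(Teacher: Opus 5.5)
Your proof is correct and is essentially the standard argument behind the cited result of Birkhoff; the paper quotes that result without reproducing a proof. Existence comes from the descending chain condition via a minimal counterexample, and uniqueness of the irredundant decomposition (hence of the minimal-size one) comes from join-irreducibles being join-prime in a distributive lattice.
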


Given a distributive lattice $P$, we let $P_{\le n}$ denote the subposet of elements with join-dimension $\le n$. 

All notions above admit dual versions obtained by exchanging joins and meets.
In particular, we define the meet-dimension $\mdim(v)$ and the subposet $P^{\le n}$ analogously.

\subsection{Cubical diagrams}

Let $[k] = \{0, \dots, k\}$, and let $\Po([k])$ be the power set of $[k]$, viewed as a poset ordered by inclusion. 
For $\C$ any category, we refer to functors $\X \colon \Po([k-1]) \to \C$ as \emph{$k$-cubes} in $\C$. 

\begin{definition}
Let $\C$ be a category.
A $(k+1)$-cube $\X \colon \Po([k]) \to \C$ is \emph{cocartesian} if the canonical map
\begin{equation*}
\underset{S \subsetneq [k]}{\colim \X(S)} \to \X([k])
\end{equation*}
is an isomorphism.

Similarly, $\X$ is \emph{cartesian} if the canonical map
\begin{equation*}
\X(\emptyset) \to \underset{S \subseteq [k], S \neq \emptyset}{\lim \X(S)}
\end{equation*}
is an isomorphism.
\end{definition}

\begin{definition}
Let $\C$ be a category.
A $(k+1)$-cube $\X \colon \Po([k]) \to \C$ is \emph{strongly cartesian} (resp. \emph{strongly cocartesian}) if each face of dimension at least 2 is cartesian (resp. cocartesian).

If $\X$ is both strongly cartesian and strongly cocartesian, it is called \emph{strongly bicartesian}.
\end{definition}

We now characterize strongly bicartesian cubes in a distributive lattice. Note that for any finite diagram $\alpha \colon I \to P$ into a lattice, $\colim_I \alpha$ is equal to the join $\bigvee_{i \in I}\alpha(i)$, and dually, $\lim_I {\alpha} = \bigwedge_{i \in I}\alpha(i)$.
\begin{definition}
Let $P$ be a lattice, let $v \in P$, and let $k$ be a positive integer.
A \emph{pairwise cover} of $v$ of size $k$ is a collection of elements $x^0, \dots, x^{k-1} \in P$, with $x^i \le v$ for all $i$, such that $x^i \vee x^j = v$ for all $i \neq j$.
\end{definition}
\begin{definition}\label{def:poset_cube}
Let $P$ be a lattice, and let $v \in P$. Let further $x^0, \dots, x^k$ be a pairwise cover of $v$.
We define the $(k+1)$-cube 
\begin{equation*}
\X_{x^0, \dots, x^k} \colon \Po([k]) \to P
\end{equation*}
as follows.
\begin{equation*}
\X_{x^0, \dots, x^k} (S) = 
\begin{cases}
v, &\quad S = [k], \\ 
\bigwedge_{i \notin S} x^i, &\quad \text{ otherwise.} \\
\end{cases}
\end{equation*}
\end{definition}
The following are Lemma 3.6 and Lemma 3.7 in \cite{hem2025posetfunctorcocalculusapplications}.
\begin{lemma}\label{lem:bicartesian_from_codecomp}
Let $P$ be a distributive lattice. For every $v \in P$ and every pairwise cover $x^0, \dots, x^k$ of $v$, the cube $\X_{x^0, \dots, x^k}$ is strongly bicartesian.
\end{lemma}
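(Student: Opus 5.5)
We need to show that every face of $\X = \X_{x^0,\dots,x^k}$ of dimension at least $2$ is both cartesian and cocartesian in $P$. Since limits and colimits of finite diagrams in a lattice are meets and joins, this amounts to lattice identities. Throughout, write $y_S = \bigwedge_{i\notin S} x^i$ for $S \subsetneq [k]$. Set $y_{[k]} = v$, which agrees with the empty-meet convention only after the adjustment below.

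A face is determined by disjoint sets $A, B \subseteq [k]$ with $|B|\ge 2$; it consists of the sets $A\cup T$ for $T\subseteq B$. The plan is to handle the two conditions in turn.

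\textbf{Cartesian faces.} We need
\begin{equation*}
\X(A)=\bigwedge_{\emptyset\ne T\subseteq B}\X(A\cup T).
\end{equation*}
It suffices to take the meet over singletons $T=\{b\}$, since the other terms are larger. First suppose $A\cup\{b\}\ne[k]$ for all $b$. Then
\begin{equation*}
\bigwedge_{b\in B} y_{A\cup\{b\}}=\bigwedge_{b}\ \bigwedge_{i\notin A\cup\{b\}} x^i,
\end{equation*}
and since $|B|\ge2$, every $i\notin A$ is missed by some $b$. This meet is therefore exactly $\bigwedge_{i\notin A}x^i=y_A$. In the case where some $A\cup\{b\}=[k]$, the corresponding term is $v$, which is above everything, and the remaining terms still cover every $i\notin A$. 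This uses only the lattice structure, not distributivity.

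\textbf{Cocartesian faces.} We need
\begin{equation*}
\X(A\cup B)=\bigvee_{T\subsetneq B}\X(A\cup T).
\end{equation*}
It suffices to take the join over the maximal proper subsets $B\setminus\{b\}$. There are two cases.

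\emph{Case $A\cup B=[k]$.} We must show $\bigvee_{b\in B} y_{[k]\setminus\{b\}}=\bigvee_{b\in B}x^b=v$. This holds because $|B|\ge2$ and $x^b\vee x^{b'}=v$ for $b\neq b'$.

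\emph{Case $A\cup B\ne[k]$.} Let $C=[k]\setminus(A\cup B)\neq\emptyset$ and $c=\bigwedge_{i\in C}x^i$. Then $y_{A\cup B\setminus\{b\}}=c\wedge x^b$, so by distributivity
\begin{equation*}
\bigvee_{b\in B}(c\wedge x^b)=c\wedge\bigvee_{b\in B}x^b=c\wedge v=c=y_{A\cup B}.
\end{equation*}
The last steps use $c\le v$ and the pairwise cover property again.

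The main point requiring care is the bookkeeping for the special top vertex $[k]$. Its value is $v$ rather than an empty meet, which would otherwise be the top element of $P$, possibly nonexistent or larger than $v$. One must check it in both the cartesian and cocartesian cases, as above. Apart from this, the only nontrivial input is the distributive law, used for finite joins by induction, in the second cocartesian case.
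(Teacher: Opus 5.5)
Your proof is correct and follows the same direct lattice computation as the proof this paper imports from its predecessor. Cartesianness comes from regrouping meets of the $x^i$, and cocartesianness from the distributive law $\bigvee_{b}(c\wedge x^b)=c\wedge\bigvee_b x^b$ together with the pairwise-cover identity $\bigvee_{b\in B}x^b=v$; checking every face with $|B|\ge 2$ directly just replaces the standard reduction to $2$-dimensional faces. (The cartesian sub-case $A\cup\{b\}=[k]$ never occurs, since $B\setminus\{b\}$ is nonempty and disjoint from $A\cup\{b\}$.)
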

\begin{lemma}\label{lem:codecomp_from_bicartesian}
Let $P$ be a lattice, and let $\X \colon \Po([k]) \to P$ be a strongly bicartesian cube. For $i \in [k]$, %\footnote{Recall that $[k] = \{0, \dots, k\}.$} 
let $x^i = \X\big([k] \setminus \{i\}\big)$. Then $x^0, \dots, x^k$ is a pairwise cover of $\X\big([k]\big)$.

Furthermore, $\X = \X_{x^0, \dots, x^k}$, as defined in \autoref{def:poset_cube}.
\end{lemma}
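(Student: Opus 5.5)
The plan is to prove the two assertions of \autoref{lem:codecomp_from_bicartesian} in turn. First, $x^0,\dots,x^k$ is a pairwise cover of $v := \X([k])$. Second, every value of $\X$ is given by the formula in \autoref{def:poset_cube}. Throughout I use only the fact that in a lattice colimits are joins and limits are meets. Hence a square face of $\X$ is cocartesian exactly when its terminal vertex is the join of the two middle vertices, and cartesian exactly when its initial vertex is their meet.

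For the pairwise cover property, the inequalities $x^i \le v$ come from functoriality, since $[k]\setminus\{i\} \subseteq [k]$. Now fix $i \neq j$ and look at the $2$-dimensional face with vertices $[k]\setminus\{i,j\}$, $[k]\setminus\{i\}$, $[k]\setminus\{j\}$ and $[k]$. Strong cocartesianness says this square is cocartesian. So $\X([k]) = \X([k]\setminus\{i\}) \vee \X([k]\setminus\{j\})$, that is, $v = x^i \vee x^j$. The case $k=0$ is vacuous: there are no pairs, and the only remaining claim is $\X(\emptyset)$ versus the formula.

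For the identification $\X(S) = \bigwedge_{i\notin S} x^i$ when $S \neq [k]$, I will use strong cartesianness and induct downward on $|S|$. The base case is $|S| = k$, where $S = [k]\setminus\{i\}$ and the formula gives $x^i$ by definition. For the inductive step, take $|S| \le k-1$ and choose two distinct indices $i,j \notin S$. The square face with vertices $S$, $S\cup\{i\}$, $S\cup\{j\}$ and $S\cup\{i,j\}$ is cartesian, so $\X(S) = \X(S\cup\{i\}) \wedge \X(S\cup\{j\})$. Both $S\cup\{i\}$ and $S\cup\{j\}$ are proper subsets of $[k]$ of larger size, so by induction their values are $\bigwedge_{l\notin S\cup\{i\}} x^l$ and $\bigwedge_{l\notin S\cup\{j\}} x^l$. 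The meet of these two is $\bigwedge_{l \notin S} x^l$, because the two index sets together cover the complement of $S$ (here $i\ne j$ is used).

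Since the argument uses only $2$-dimensional faces, "each face of dimension at least $2$" is more than enough, and no distributivity is needed. This matches the hypothesis that $P$ is merely a lattice. The one step that needs care is the bookkeeping of indices in the inductive step, to make sure both smaller sets stay proper. That holds because each omits $j$ or $i$ respectively. I expect no real obstacle.
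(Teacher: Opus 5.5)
Your proof is correct and complete. The first claim follows from the cocartesian square $\langle [k]\setminus\{i,j\}, [k]\rangle$ for each pair $i\neq j$. For the second, your downward induction on $|S|$ correctly uses the cartesian square $\langle S, S\cup\{i,j\}\rangle$. It also relies on two facts you handle properly: $([k]\setminus S)\setminus\{i\}$ and $([k]\setminus S)\setminus\{j\}$ together cover $[k]\setminus S$, and in a poset an isomorphism is an equality, so the cartesian condition really reads $\X(S)=\X(S\cup\{i\})\wedge\X(S\cup\{j\})$. The degenerate case $k=0$ is also handled correctly.

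The paper gives no argument of its own here; it imports the statement as Lemma~3.7 of the earlier poset-cocalculus paper. So the natural in-paper comparison is with \autoref{rmk:cubes_bicart_meet}. That remark characterizes strongly bicartesian cubes in a lattice as those satisfying $\X(S\cup T)=\X(S)\vee\X(T)$ and $\X(S\cap T)=\X(S)\wedge\X(T)$ for all $S,T$. Granting it, both claims are immediate:
\begin{itemize}
\item $[k]=([k]\setminus\{i\})\cup([k]\setminus\{j\})$ gives $x^i\vee x^j=\X([k])$;
\item $S=\bigcap_{i\notin S}([k]\setminus\{i\})$ gives $\X(S)=\bigwedge_{i\notin S}x^i$.
\end{itemize}
However, the remark only asserts the passage from square faces to arbitrary pairs $S,T$, and proving it requires essentially the induction you carry out. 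Your argument establishes the needed instance of meet-preservation directly from the $2$-dimensional faces. That makes it self-contained, and it shows explicitly that neither higher-dimensional faces nor distributivity are used, consistent with the lemma being stated for an arbitrary lattice.
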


\begin{remark}\label{rmk:cubes_bicart_meet}
    For any cube into any category, if all faces of dimension 2 are (co)cartesian, then so are all faces of dimension $> 2$ \cite[p.\ 272]{cubicalhtpy}. Hence, a cube is strongly (co)cartesian if all faces of dimension 2 are (co)cartesian.

    In particular, when $P$ is a lattice, an $n$-cube $\X \colon \Po([n-1]) \to P$ is strongly cocartesian if and only if
    \begin{equation*}
        \X(S \cup T) = \X(S) \vee \X(T)
    \end{equation*}
    for all $S,T \subseteq [n-1]$. 
    Likewise, it's strongly cartesian if and only if
    \begin{equation*}
        \X(S \cap T) = \X(S) \wedge \X(T)
    \end{equation*}
    for all $S,T \subseteq [n-1]$.
    In other words, an $n$-cube is strongly cocartesian if and only if it preserves join, strongly cartesian if and only if it preserves meet, and strongly bicartesian if and only if it preserves both join and meet.
    In the case where $\X \colon \Po([n-1]) \to P$ is injective, it is strongly bicartesian if and only if it is a sublattice.
\end{remark}

\subsection{Poset (co)calculus}

We recall here the main definitions and results of the \emph{poset cocalculus} introduced in \cite{hem2025posetfunctorcocalculusapplications}.

\begin{definition}
Let $P$ be a distributive lattice, and $\C$ a finitely cocomplete category. A functor $F \colon P \to \C$ is \emph{codegree $n$} if it takes strongly bicartesian $(n+1)$-cubes to cocartesian $(n+1)$-cubes.
\end{definition}

Given a functor $F \colon P \to \C$, we denote by $T_n F$ its \emph{codegree $n$ approximation} (defined as in \cite[Definition 4.30]{hem2025posetfunctorcocalculusapplications}).
That is, $T_n F$ is the left Kan extension of the restriction $F|_{P_{\le n}}$ along the inclusion $i \colon P_{\le n} \hookrightarrow P$.
This can be computed objectwise as
\begin{equation}\label{eq:T_n_cat}
T_n F (x) = \underset{v \in P_{\le n}, v \le x}{\colim} F(v).
\end{equation}
The universal property of left Kan extensions gives a canonical natural transformation, $\varepsilon_n \colon T_n F \to F$, natural in $F$.
Observe that $T_n$ commutes with colimits in $\Fun(P, \C)$, as colimits commute with colimits.

The following two theorems are Theorem 4.31 and Theorem 4.32 in \cite{hem2025posetfunctorcocalculusapplications}.
\begin{theorem}\label{theoremA_cat}
If $P$ is a distributive lattice, then for every functor $F \colon P \to \C$ to a cocomplete category, $T_k F$ is codegree $k$.
\end{theorem}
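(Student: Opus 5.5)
Fix a pairwise cover $x^0,\dots,x^k$ of $v$ and put $\X=\X_{x^0,\dots,x^k}$. By \autoref{lem:codecomp_from_bicartesian}, every strongly bicartesian $(k+1)$-cube in $P$ has this form. We must therefore show that the canonical map
\begin{equation*}
\colim_{S\subsetneq[k]} T_kF(\X(S)) \to T_kF(v)
\end{equation*}
is an isomorphism. Using \eqref{eq:T_n_cat}, the source is an iterated colimit,
\begin{equation*}
\colim_{S\subsetneq[k]}\ \colim_{u\in P_{\le k},\,u\le \X(S)} F(u),
\end{equation*}
and by the Grothendieck construction this equals the colimit of $F$ over the category $J$ of pairs $(S,u)$ with $u\in P_{\le k}$ and $u\le\X(S)$. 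The target is the colimit of $F$ over $D=\{u\in P_{\le k}: u\le v\}$. The plan is to show that the projection $\pi\colon J\to D$, $(S,u)\mapsto u$, is final. Then the two colimits agree, and the comparison map is the canonical one.

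For finality, fix $u\in D$. The comma category $u\downarrow\pi$ consists of pairs $(S,u')$ with $u\le u'\le \X(S)$. It maps to the poset $\{S\subsetneq[k] : u\le \X(S)\}$, and each fiber has the initial object $u'=u$. Hence $u\downarrow\pi$ is connected exactly when this poset is nonempty and connected. Since $\X$ preserves meets and $\X(S)=\bigwedge_{i\notin S}x^i$ for $S\ne[k]$, we have $u\le\X(S)$ if and only if $u\le x^i$ for all $i\notin S$. Setting $A_u=\{i: u\le x^i\}$, the poset becomes $\{S\subsetneq[k]: [k]\setminus S\subseteq A_u\}$. If $A_u\neq\emptyset$, this is the set of proper subsets containing $[k]\setminus A_u$. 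It has the least element $[k]\setminus A_u$, so it is connected.

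The main obstacle is showing $A_u\neq\emptyset$ for every $u\le v$ with $\jdim(u)\le k$. I would argue as follows. Write $u=j_1\vee\dots\vee j_m$ with $m\le k$ and each $j_r$ join-irreducible. For join-irreducible $j\le v$, distributivity gives
\begin{equation*}
j = j\wedge(x^a\vee x^b) = (j\wedge x^a)\vee(j\wedge x^b)\quad\text{for } a\ne b.
\end{equation*}
Join-irreducibility then forces $j\le x^a$ or $j\le x^b$. So each $j_r$ fails to lie below at most one $x^i$. The $m\le k$ elements $j_r$ exclude at most $k$ indices among the $k+1$ indices in $[k]$. Some index $i$ therefore has all $j_r\le x^i$, whence $u\le x^i$.

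Two boundary cases remain. If $u$ is minimal (so $\jdim(u)=0$), it may not be join-irreducible in the strict sense. But a minimal $u\le v$ lies below every element it is comparable to, and in a lattice a minimal element is the bottom, so $u\le x^i$ for all $i$. When $P$ lacks the descending chain condition, elements of infinite join-dimension simply do not lie in $P_{\le k}$, so they cause no trouble. Finally, I would check that the resulting isomorphism is the canonical map induced by $\X(S)\le v$; this follows because both colimits are formed from the same inclusions into $F(v)$-indexed data.
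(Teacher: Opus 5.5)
Your proof is correct. This paper does not prove the statement itself; it imports it as Theorem 4.31 of \cite{hem2025posetfunctorcocalculusapplications}, so there is no in-text argument to compare yours against. What you have written is a complete, self-contained proof of the $1$-categorical version stated here.

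Your structure is to rewrite the source as a colimit over the Grothendieck construction $J$ and then show that $\pi\colon J\to D$ is final. This isolates a single combinatorial input: every $u\in P_{\le k}$ with $u\le v$ lies below some $x^i$. Your pigeonhole argument through join-irreducibles, which in a distributive lattice are join-prime, is exactly where both distributivity and the bound $\jdim(u)\le k$ enter. It uses no descending chain condition, which matches the hypothesis that $P$ is only a distributive lattice.

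Two small remarks. First, your closing sentence about the canonical map is vague. A cleaner statement is that the map on colimits induced by $\pi$ restricts, on the $S$-indexed piece, to $T_kF(\X(S)\le v)$, and is therefore the comparison map. Second, each comma category $u\downarrow\pi$ has a least element, namely $([k]\setminus A_u,u)$, and not merely a connected nerve. So your argument in fact gives homotopy finality, and it should carry over to the homotopy-colimit setting of the original poset cocalculus paper.
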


Let $P$ be a distributive lattice with a minimal element. We say that $P$ is a \emph{join-factorization lattice} if there exists a distributive lattice $Q$, satisfying the descending chain condition, and an order-preserving function $f \colon P \to Q$ such that for all $v \in P$,
\begin{equation*}
\jdim(f(v)) = \jdim(v).
\end{equation*}

\begin{theorem}\label{theoremB_cat}
Let $P$ be a join-factorization lattice, and let $F \colon P \to \C$ be a functor to a cocomplete category. If $F$ is codegree $n$, then $\varepsilon_n \colon T_n F \to F$ is a natural isomorphism.
\end{theorem}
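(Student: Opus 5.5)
The plan is to prove that $\varepsilon_n(x) \colon T_nF(x) \to F(x)$ is an isomorphism by induction on $\jdim(x)$. First I check that this induction is well-founded. Let $f \colon P \to Q$ be as in the definition of a join-factorization lattice. By \autoref{thm:Birkhoff}, every element of $Q$ has finite join-dimension, so $\jdim(x) = \jdim(f(x)) < \infty$ for every $x \in P$.

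\emph{Base case.} If $\jdim(x) \le n$, then $x \in P_{\le n}$. The indexing poset in \eqref{eq:T_n_cat} then has terminal object $x$, so $\varepsilon_n(x)$ is an isomorphism.

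\emph{Inductive step.} Let $\jdim(x) = k > n$ and write $x = p^0 \vee \dots \vee p^{k-1}$ with the $p^j$ join-irreducible. Partition $\{0,\dots,k-1\}$ into $n+1$ nonempty blocks $B_0,\dots,B_n$, which is possible since $k \ge n+1$. Set $y_j = \bigvee_{l \in B_j} p^l$ and $x^i = \bigvee_{j \neq i} y_j$. For $i \neq j$ we get $x^i \vee x^j = x$, so $x^0,\dots,x^n$ is a pairwise cover of $x$. By \autoref{lem:bicartesian_from_codecomp}, the cube $\X = \X_{x^0,\dots,x^n}$ is strongly bicartesian. (For $n=0$ there is one block, $x^0 = y_0 = x$ is harmless but useless. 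In that case I instead use the degenerate $1$-cube from the bottom element to $x$, which is trivially strongly bicartesian because a $1$-cube has no faces of dimension $\ge 2$; its bottom vertex has $\jdim = 0$.)

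By \autoref{theoremA_cat}, $T_nF$ is codegree $n$, and $F$ is codegree $n$ by hypothesis. Hence both $T_nF \circ \X$ and $F \circ \X$ are cocartesian. Naturality of $\varepsilon_n$ gives a map of cubes $T_nF\circ\X \to F\circ\X$. Its component at $[n]$ is $\varepsilon_n(x)$, and it is the map induced on colimits of the punctured cubes. So it suffices to show that $\varepsilon_n(\X(S))$ is an isomorphism for every $S \subsetneq [n]$. By induction, this follows once we know $\jdim(\X(S)) < k$ for all such $S$.

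The main obstacle is this bound. For $S \subsetneq [n]$, pick $i \notin S$; then $\X(S) \le x^i$. The element $x^i$ is a join of $k - |B_i| < k$ join-irreducibles, so $\jdim(x^i) \le k-1$. It remains to know that join-dimension is monotone. I would prove monotonicity in $Q$ and transport it: $\jdim(\X(S)) = \jdim(f(\X(S)))$ and $\jdim(f(x^i)) = \jdim(x^i)$, with $f(\X(S)) \le f(x^i)$. In $Q$, which is distributive and satisfies the descending chain condition, I would use the characterization of $\jdim(q)$ as the number of maximal join-irreducibles below $q$ (the irredundant decomposition from \autoref{thm:Birkhoff}; cf.\ \cite[Section 4.2]{hem2025posetfunctorcocalculusapplications}). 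I would then show that if $q \le q'$, each maximal join-irreducible $a$ below $q$ lies below a maximal join-irreducible of $q'$; this holds by distributivity, since $a = a \wedge q' = \bigvee_j (a \wedge p'^j)$ and $a$ is join-irreducible. Finally I need these assignments to be injective, or else a width (Dilworth-type) argument, to conclude $\jdim(q) \le \jdim(q')$. If the injectivity fails directly, I would fall back on citing the monotonicity from \cite[Section 4.2]{hem2025posetfunctorcocalculusapplications}. With this, the induction closes and $\varepsilon_n$ is a natural isomorphism.
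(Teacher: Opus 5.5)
Your induction on $\jdim$ does not close, because the monotonicity you need is false, and so is the bound $\jdim(\X(S))<k$ it was meant to give. Take $P$ to be the lattice of down-sets of $J=\{a_1,a_2,a_3,m_0,m_1\}$ with $a_j<m_0$ and $a_j<m_1$ for all $j$. This is a finite distributive lattice, hence a join-factorization lattice with $Q=P$ and $f=\id$. Let $n=1$ and $x=J=p^0\vee p^1$ with $p^0={\downarrow}m_0$ and $p^1={\downarrow}m_1$, so $k=2$. Your cube then has $x^0={\downarrow}m_1$, $x^1={\downarrow}m_0$ and $\X(\emptyset)=\{a_1,a_2,a_3\}$. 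This bottom vertex has join-dimension $3>k$, even though it lies below the join-irreducible $x^1$. The injectivity you hoped for also fails: the three maximal join-irreducibles $\{a_1\},\{a_2\},\{a_3\}$ below $\X(\emptyset)$ all go to the single one, ${\downarrow}m_0$, below $x^1$. No better cube exists either: if $x^0\vee x^1=J$ with $x^0,x^1\neq J$, then $\{x^0,x^1\}=\{{\downarrow}m_0,{\downarrow}m_1\}$. So the hypothesis ``all elements of join-dimension $<k$'' says nothing about $\X(\emptyset)$, and the fallback citation cannot exist because the statement is false. Compare \autoref{lemma:cross_effect_B}, where induction on $\jdim$ does work. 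There, by \autoref{lemma:tocofib_coprod}, the total cofiber only sees the codimension-one vertices. Cocartesianness, by contrast, involves the whole punctured cube, including deep meets such as $\X(\emptyset)$.

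The repair uses only ingredients you already have, with a different induction measure: well-founded induction on $f(x)\in Q$, which is available because $Q$ has the descending chain condition. Suppose some $\varepsilon_n(x)$ fails to be an isomorphism, and pick such an $x$ with $f(x)$ minimal among the values of $f$ on failures. By your base case, $\jdim(x)=k>n$. For $S\subsetneq[n]$ and $i\notin S$, order preservation gives $f(\X(S))\le f(x^i)\le f(x)$. Moreover $f(x^i)\neq f(x)$, since $\jdim(f(x^i))=\jdim(x^i)\le k-1<k=\jdim(f(x))$. Hence $f(\X(S))<f(x)$, so every $\varepsilon_n(\X(S))$ is an isomorphism, and your cocartesian-square argument gives a contradiction. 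This needs a strict drop of $\jdim$ only at the codimension-one vertices $x^i$, which you did prove. It also explains why the definition of a join-factorization lattice asks for an order-preserving map into a lattice with the descending chain condition, rather than just finite join-dimension. A minor point: for $n=0$ your formula gives the empty join $x^0=\widehat 0$, not $x$, which is the cube you end up using anyway.
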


Examples of join-factorization lattices include distributive lattices satisfying the descending chain condition (and, in particular, finite distributive lattices) \cite[Example 4.17]{hem2025posetfunctorcocalculusapplications}.
Furthermore, if $T_1, \dots, T_n$ are total orders with minimal elements, then $P = T_1 \times \dots \times T_n$ is a join-factorization lattice \cite[Example 4.18]{hem2025posetfunctorcocalculusapplications}.

Codegree $n$ approximation satisfies the following universal property.
\begin{proposition}
    Let $P$ be a join-factorization lattice, and let $F \colon P \to \C$ be a functor to a cocomplete category.
    Then, every natural transformation $\alpha \colon G \to F$ from a codegree $n$ functor $G$ factors uniquely through $T_nF \to F$.
\end{proposition}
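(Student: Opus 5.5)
Existence will come from the left Kan extension adjunction, and uniqueness from \autoref{theoremB_cat}. Write $i \colon P_{\le n} \hookrightarrow P$, so that $T_n F = \Lan_i(F|_{P_{\le n}}) = \Lan_i i^* F$, with counit $\varepsilon_n \colon T_n F \to F$. The statement then reduces to showing that, for $G$ codegree $n$, the map
\begin{equation*}
\mathrm{Nat}(G, T_n F) \to \mathrm{Nat}(G, F), \qquad \beta \mapsto \varepsilon_{n,F} \circ \beta,
\end{equation*}
is a bijection.

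\textbf{Step 1.} Since $P$ is a join-factorization lattice and $G$ is codegree $n$, \autoref{theoremB_cat} says that $\varepsilon_{n,G} \colon T_n G \to G$ is a natural isomorphism. I will therefore identify $G$ with $T_n G = \Lan_i i^*G$.

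\textbf{Step 2.} By the adjunction $\Lan_i \dashv i^*$,
\begin{equation*}
\mathrm{Nat}(\Lan_i i^*G, F) \cong \mathrm{Nat}(i^*G, i^*F) \cong \mathrm{Nat}(\Lan_i i^* G, \Lan_i i^* F).
\end{equation*}
The second bijection holds because $i$ is fully faithful, so the unit $i^*F \to i^*\Lan_i i^*F$ is an isomorphism. Concretely, evaluating \eqref{eq:T_n_cat} at $x \in P_{\le n}$ gives a colimit over a poset with terminal object $x$, hence $F(x)$.

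\textbf{Step 3.} I need to check that the composite bijection $\mathrm{Nat}(T_nG, T_nF) \to \mathrm{Nat}(T_nG, F)$ is indeed postcomposition with $\varepsilon_{n,F}$. By naturality of $\varepsilon_n$, for $\beta' = T_n(i^*\text{-restriction of }\alpha)$ we get $\varepsilon_{n,F} \circ T_n\alpha = \alpha \circ \varepsilon_{n,G}$. So the factorization is explicitly
\begin{equation*}
\beta = T_n\alpha \circ \varepsilon_{n,G}^{-1},
\end{equation*}
and $\varepsilon_{n,F} \circ \beta = \alpha$.

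\textbf{Step 4 (uniqueness).} Suppose $\varepsilon_{n,F}\circ\beta = \varepsilon_{n,F}\circ\beta'$. Restrict both sides to $P_{\le n}$. There $\varepsilon_{n,F}$ is an isomorphism, so $i^*\beta = i^*\beta'$. A map out of $G \cong \Lan_i i^*G$ into any functor is determined by its restriction along $i$ after precomposing with the unit, by the adjunction. Hence $\beta = \beta'$.

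The main obstacle is the bookkeeping in Steps 3 and 4: verifying that the adjunction bijections are compatible with postcomposition by $\varepsilon_{n,F}$, and that transporting along the isomorphism $\varepsilon_{n,G}$ does not disturb this. The key input is that the restriction of $\varepsilon_n$ to $P_{\le n}$ is the identity, which follows from full faithfulness of $i$.
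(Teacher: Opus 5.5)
Your proof is correct and complete. Step 3 already gives existence through the explicit factorization $T_n\alpha\circ\varepsilon_{n,G}^{-1}$, and Step 4 gives uniqueness. So the chain of bijections in Step 2, and the ``bookkeeping'' you flag, are not needed.

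If you want to make Step 4 fully precise, write $u\colon \id\to i^*\Lan_i$ for the unit. The triangle identity $i^*\varepsilon_{n,G}\circ u_{i^*G}=\id$ shows that the adjoint transpose of $\beta\circ\varepsilon_{n,G}$ is exactly $i^*\beta$.

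The paper does not prove this proposition itself; it is quoted from the earlier work. The proof the paper gives for the analogous statement about $\Gamma_n$ (\autoref{prop:adjoints} and \autoref{prop:universal_prop_cross}) is abstract:
\begin{itemize}
\item the (co)monad is idempotent because $i$ is fully faithful, as in \autoref{prop:idempotent_monad};
\item an idempotent (co)monad is a (co)reflector onto its essential image;
\item Theorems A and B identify that essential image with the relevant full subcategory;
\item the factorization is then the universal-morphism property of the resulting adjunction.
\end{itemize}
Applied to $T_n$, that route needs both \autoref{theoremA_cat} and \autoref{theoremB_cat}. It yields the stronger conclusion that $T_n$ is right adjoint to the inclusion of the codegree $n$ functors.

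Your argument is a hands-on instance of the same coreflection principle. It uses only \autoref{theoremB_cat}, to place $G$ in the essential image of $\Lan_i$, together with full faithfulness of $i$, and it produces an explicit formula for the factorization. It does not show that $T_nF$ is itself codegree $n$, and the statement as given does not require this.
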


All constructions above admit dual versions, yielding \emph{poset calculus}.

\begin{itemize}
    \item A functor $F \colon P \to \C$ is \emph{degree $n$} if it takes strongly bicartesian $(n+1)$-cubes to cartesian $(n+1)$-cubes.
    \item  Given a functor $F \colon P \to \C$, we denote by $T^n F$ its \emph{degree $n$ approximation}, defined as 
    the right Kan extension of the restriction $F|_{P^{\le n}}$ along $i \colon P^{\le n} \hookrightarrow P$. We let $\eta_n \colon F \to T^n F$ be the canonical natural transformation.
    \item Dual versions of \autoref{theoremA_cat} and \autoref{theoremB_cat} hold for poset calculus \cite[Section 8]{hem2025posetfunctorcocalculusapplications}.
\end{itemize}

\section{Total (co)fibers}

We review the theory of total fibers and total cofibers of cubes, and describe how to compute them. Note that all the notions here are 1-categorical, that is, the limits are \emph{not} homotopy limits.

\subsection{Total fibers of cubes}

\begin{definition}\label{def:tofib}
Let $\C$ be a pointed, finitely complete category and $\X \colon \Po([n-1]) \to \C$ an $n$--cube in $\C$.
The \emph{total fiber} of $\X$, denoted $\tfib \X$, is defined inductively as
\begin{equation*}
  \tfib (\X) = \begin{cases}
    \fib\left(\X(\emptyset) \to \X(\{0\})\right), &\quad n = 1, \\
    \fib \left( \tfib (\X|_{\langle \emptyset, [n-2] \rangle}) \to \tfib (\X|_{\langle \{n-1\}, [n-1]\rangle}) \right), &\quad n > 1.
  \end{cases}
\end{equation*}
\end{definition}

\begin{lemma}\label{lemma:tofib}
  Let $\C$ be a pointed, finitely complete category and $\X \colon \Po([n-1]) \to \C$ an $n$--cube in $\C$.
  Then
  \begin{equation*}
    \tfib \X \cong \fib \left( \X(\emptyset) \to \underset{S \subseteq [n-1], S \neq \emptyset}{\lim} \X(S)\right).
  \end{equation*}
\end{lemma}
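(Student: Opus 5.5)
We argue by induction on $n$, using the inductive definition of $\tfib$ in \autoref{def:tofib}. Throughout, write $L(\X) = \lim_{S \neq \emptyset} \X(S)$ for the limit over the punctured cube. The goal is to show that $\tfib \X$ is the fiber of the canonical map $\X(\emptyset) \to L(\X)$.

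\textbf{Base case $n = 1$.} The punctured cube is the single object $\{0\}$, so $L(\X) = \X(\{0\})$. The claim then reduces exactly to the definition.

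\textbf{Inductive step.} Write $\X$ as a map of $(n-1)$-cubes $\Yc \to \mathcal{Z}$, where $\Yc = \X|_{\langle \emptyset, [n-2] \rangle}$ and $\mathcal{Z} = \X|_{\langle \{n-1\}, [n-1] \rangle}$, reindexed by $T \mapsto T \cup \{n-1\}$. By the induction hypothesis,
\begin{equation*}
\tfib \Yc \cong \fib(\Yc(\emptyset) \to L(\Yc)), \qquad \tfib \mathcal{Z} \cong \fib(\mathcal{Z}(\emptyset) \to L(\mathcal{Z})).
\end{equation*}

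The key structural input is the standard decomposition of the punctured $n$-cube. The nonempty subsets of $[n-1]$ split into two families:
\begin{itemize}
\item those containing $n-1$, which form the whole cube $\mathcal{Z}$, whose limit is $\mathcal{Z}(\emptyset) = \X(\{n-1\})$ since the cube has an initial object;
\item those not containing $n-1$, which form the punctured cube of $\Yc$.
\end{itemize}
A cofinality (or right Kan extension) argument then gives the pullback description
\begin{equation*}
L(\X) \cong \mathcal{Z}(\emptyset) \times_{L(\mathcal{Z})} L(\Yc).
\end{equation*}
I expect verifying this formula to be the main obstacle. It requires checking that a cone over the punctured $n$-cube is exactly a compatible pair consisting of a cone over the punctured $\Yc$ and a map to $\mathcal{Z}(\emptyset)$, agreeing after mapping to $L(\mathcal{Z})$. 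I would prove it directly by comparing cones, since everything is 1-categorical.

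\textbf{Computing the fiber.} Set $A = \Yc(\emptyset)$, $B = L(\Yc)$, $C = \mathcal{Z}(\emptyset)$ and $D = L(\mathcal{Z})$, so that we have a commutative square $A \to B$, $A \to C$, $B \to D$, $C \to D$. We want
\begin{equation*}
\fib(A \to C \times_D B) \cong \fib\big(\fib(A \to B) \to \fib(C \to D)\big).
\end{equation*}
Both sides represent the functor sending $W$ to the set of maps $W \to A$ whose composites to $B$ and to $C$ are zero; the composite to $D$ is then automatically zero. For the left side, this uses that maps into a pullback are compatible pairs and that zero is compatible. For the right side, a map into $\fib(A \to B)$ is a map to $A$ killed in $B$, and its composite to $\fib(C \to D)$ is zero exactly when the composite to $C$ is zero, because $\fib(C \to D) \to C$ is a monomorphism.

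By Yoneda the two fibers are isomorphic, and the identification is compatible with the canonical maps, which completes the induction. Naturality is routine to check.
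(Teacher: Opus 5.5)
Your proposal is correct and follows essentially the argument the paper defers to (the proof of \cite[Proposition 5.5.4]{cubicalhtpy}, adapted to 1-categorical limits): write the punctured-cube limit as the pullback $\X(\{n-1\}) \times_{L(\mathcal{Z})} L(\Yc)$ and then commute the fibers, and your cone comparison and Yoneda computation carry this out correctly in a pointed, finitely complete category. The one point to tighten is to state the induction hypothesis as an isomorphism natural in the cube, since that naturality is what identifies the map $\tfib \Yc \to \tfib \mathcal{Z}$ in the definition with the induced map $\fib(A \to B) \to \fib(C \to D)$.
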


\begin{proof}
  This is a standard result for cubical diagrams.
  A proof is given in \cite[Proposition 5.5.4]{cubicalhtpy} for homotopy limits in topological spaces. This proof is straightforward and easily adapted to the case of limits in a pointed category.
\end{proof}

\begin{lemma}\label{lemma:tofib_prod}
  Let $\C$ be a pointed, finitely complete category and $\X \colon \Po([n-1]) \to \C$ an $n$--cube in $\C$.
  Then
  \begin{equation*}
    \tfib \X \cong \fib \left( \X(\emptyset) \to \prod_{S \subseteq [n-1], |S| = 1} \X(S) \right).
  \end{equation*}
\end{lemma}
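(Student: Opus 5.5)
The plan is to reduce to \autoref{lemma:tofib}, which identifies $\tfib \X$ with the fiber of the canonical map $\X(\emptyset) \to L$, where $L = \lim_{S \subseteq [n-1], S \neq \emptyset} \X(S)$. We will not try to compare $L$ with $\prod_{|S|=1}\X(S)$ as objects; the comparison map $L \to \prod_{|S|=1}\X(S)$ need not be a monomorphism. Instead we compare the two fibers directly through their universal properties, which is all that matters for the kernel of a map out of $\X(\emptyset)$.

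Concretely, write $f \colon \X(\emptyset) \to L$ for the canonical map, with components $f_S = \X(\emptyset \subseteq S) \colon \X(\emptyset) \to \X(S)$. Write $g \colon \X(\emptyset) \to \prod_{i \in [n-1]} \X(\{i\})$ for the map with components $f_{\{i\}}$. Both fibers are characterized as follows. A morphism $a \colon T \to \X(\emptyset)$ factors (necessarily uniquely) through $\fib(f)$, respectively $\fib(g)$, exactly when $f \circ a = 0$, respectively $g \circ a = 0$. So it suffices to show, for every object $T$ and every $a \colon T \to \X(\emptyset)$, that $f\circ a = 0$ if and only if $g \circ a = 0$. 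Then the two fibers represent the same subfunctor of $\C(-,\X(\emptyset))$, and by Yoneda they are isomorphic over $\X(\emptyset)$.

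For the equivalence, first recall the component criteria. A map into a limit is zero iff all its components are zero, by uniqueness in the universal property of $L$ and since the zero map has zero components. Likewise a map into a product is zero iff all its components vanish. Hence $f\circ a = 0$ iff $f_S \circ a = 0$ for all nonempty $S$, and $g \circ a = 0$ iff $f_{\{i\}}\circ a = 0$ for all $i$.

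\begin{itemize}
\item If $f \circ a = 0$, then in particular all the singleton components $f_{\{i\}} \circ a$ vanish, so $g \circ a = 0$.
\item Conversely, suppose $g \circ a = 0$ and let $S$ be nonempty. Pick $i \in S$. Functoriality of $\X$ gives $f_S = \X(\{i\} \subseteq S) \circ f_{\{i\}}$, so $f_S \circ a = \X(\{i\}\subseteq S)\circ f_{\{i\}} \circ a = 0$, using that composites with a zero map are zero in a pointed category. Thus $f \circ a = 0$.
\end{itemize}

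No step looks genuinely hard. The only point needing care is to avoid claiming that $L \to \prod_{|S|=1}\X(S)$ is monic, and to argue instead through the representable characterization of fibers, as above. Combining this with \autoref{lemma:tofib} gives the stated isomorphism.
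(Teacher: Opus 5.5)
Your proposal is correct and follows essentially the same route as the paper: reduce to \autoref{lemma:tofib}, then show via the universal property of fibers that a map $a \colon T \to \X(\emptyset)$ is killed by the comparison map into the limit exactly when it is killed by the map into $\prod_{|S|=1}\X(S)$. The nontrivial direction in both arguments factors $\X(\emptyset \subseteq S)$ through some singleton $\{i\} \subseteq S$.
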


\begin{proof}
  Let $\gamma$ denote the canonical map $\gamma \colon \X(\emptyset) \to \lim_{S \subseteq [n-1], S \neq \emptyset} \X(S)$, and let $\gamma'$ denote the canonical map $\X(\emptyset) \to \prod_{S \subseteq [n-1], |S|=1} \X(S)$.
  By \autoref{lemma:tofib}, it suffices to show that $\fib \gamma \cong \fib \gamma'$.
  We prove this by checking the universal properties.

  First, let $f \colon A \to \X(\emptyset)$ be such that $\gamma \circ f = 0$. Then for any $S \subseteq [n-1]$ with $|S| = 1$, $\X(\emptyset \subseteq S) \circ f$ equals the composition
  \begin{equation*}
      A \xrightarrow{f} \X(\emptyset) \xrightarrow{\gamma} \lim_{T \subseteq [n-1], T \neq \emptyset} \X(T) \to \X(S),
  \end{equation*}
  which equals $0$. Hence, $\gamma' \circ f = 0$.

  Now, let $f \colon A \to \X(\emptyset)$ be such that $\gamma' \circ f = 0$.
  Let $S \subseteq [n-1]$ with $S \neq \emptyset$. Let $i \in S$. Then,
  \begin{equation*}
    \X(\emptyset \subseteq S) \circ f = \X(\{i\} \subseteq S) \circ \X(\emptyset \subseteq \{i\}) \circ f,
  \end{equation*}
  which equals the composition
  \begin{equation*}
    A \xrightarrow{f} \X(\emptyset) \xrightarrow{\gamma'} \prod_{T \subseteq [n-1], |T| = 1} \X(T) \to \X({i}) \to \X(S),
  \end{equation*}
  which equals 0. Hence, $\gamma \circ f = 0$.
\end{proof}

\subsection{Total cofibers of cubes}

\begin{definition}\label{def:tocofib}
Let $\C$ be a pointed, finitely cocomplete category and $\X \colon \Po([n-1]) \to \C$ an $n$--cube in $\C$.
The \emph{total cofiber} of $\X$, denoted $\tcofib \X$, is defined inductively as
\begin{equation*}
  \tcofib (\X) = \begin{cases}
    \cofib\left(\X(\emptyset) \to \X(\{0\})\right), \quad n = 1, \\
    \cofib \left( \tcofib (\X|_{\langle \emptyset, [n-2] \rangle}) \to \tcofib (\X|_{\langle \{n-1\}, [n-1]\rangle}) \right), \quad n > 1.
  \end{cases}
\end{equation*}
\end{definition}

\begin{lemma}\label{lemma:tocofib}
  Let $\C$ be a pointed, finitely cocomplete category and $\X \colon \Po([n-1]) \to \C$ an $n$--cube in $\C$.
  Then
  \begin{equation*}
    \tcofib \X \cong \cofib \left( \underset{S \subsetneq [n-1]}{\colim} \X(S) \to \X([n-1]) \right).
  \end{equation*}
\end{lemma}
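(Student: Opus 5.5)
The natural route is to dualize the proof of \autoref{lemma:tofib}: argue by induction on $n$ directly from the inductive \autoref{def:tocofib}, in exactly the way \cite[Proposition 5.5.4]{cubicalhtpy} treats total homotopy fibers. Write $\partial\X$ for the colimit $\colim_{S \subsetneq [n-1]} \X(S)$ and $c_\X \colon \partial\X \to \X([n-1])$ for the canonical map. For $n = 1$ there is nothing to prove: the punctured cube is the single object $\X(\emptyset)$, so $\partial\X = \X(\emptyset)$ and both sides are $\cofib(\X(\emptyset) \to \X(\{0\}))$.

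For the inductive step, I split $\X$ into the two $(n-1)$-cubes $\X_0 = \X|_{\langle \emptyset, [n-2] \rangle}$ and $\X_1 = \X|_{\langle \{n-1\}, [n-1]\rangle}$, together with the natural transformation $\X_0 \to \X_1$. By the induction hypothesis, $\tcofib \X_j \cong \cofib(c_{\X_j})$, naturally in the cube. The map $\tcofib\X_0 \to \tcofib\X_1$ is therefore induced by the commutative square with rows $c_{\X_0} \colon \partial\X_0 \to \X_0([n-2])$ and $c_{\X_1} \colon \partial\X_1 \to \X_1([n-2])$.

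The key categorical fact is a decomposition of the punctured cube $\Po([n-1]) \setminus \{[n-1]\}$. It is the union of $\Po([n-2])$, which is all of $\X_0$, and the punctured part of $\X_1$. These two pieces are glued along the punctured part of $\X_0$. Since colimits can be computed piecewise, this gives a pushout
\begin{equation*}
\partial\X \cong \X_0([n-2]) \sqcup_{\partial\X_0} \partial\X_1 .
\end{equation*}
I would verify this identification by checking cocones: a cocone on the punctured $n$-cube is the same thing as a cocone on $\X_0$ (which is determined by its value at $[n-2]$) together with a cocone on $\partial\X_1$, compatible on $\partial\X_0$.

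With that in hand, the remaining step is an iterated-cofiber computation, which holds in any pointed finitely cocomplete category. Given the commutative square, the cofiber of the induced map $\cofib(c_{\X_0}) \to \cofib(c_{\X_1})$ is isomorphic to the cofiber of the gap map from the pushout $\X_0([n-2]) \sqcup_{\partial\X_0} \partial\X_1$ to $\X([n-1])$. I would prove this by comparing universal properties, as in the proof of \autoref{lemma:tofib_prod}. A map out of the iterated cofiber into an object $A$ amounts to a map $\X([n-1]) \to A$ that vanishes on the image of $\partial\X_1$ and on the image of $\X_0([n-2])$. By the pushout property, this is exactly a map killing the image of $\partial\X$.

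I expect the main obstacle to be the careful bookkeeping. One has to check that the map $\tcofib\X_0 \to \tcofib\X_1$ appearing in the definition agrees, under the inductive isomorphisms, with the map induced by the square. This requires the isomorphism of the lemma to be natural in $\X$, so I would carry naturality along as part of the induction hypothesis. Alternatively, as the paper does for \autoref{lemma:tofib}, one can simply cite the dual of \cite[Proposition 5.5.4]{cubicalhtpy}, adapted to strict colimits in a pointed category.
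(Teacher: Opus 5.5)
Your proposal is correct and follows the route the paper intends. The paper only says the lemma is dual to \autoref{lemma:tofib}, whose proof in turn defers to an adaptation of \cite[Proposition 5.5.4]{cubicalhtpy}; your induction via the pushout decomposition $\partial\X \cong \X_0([n-2]) \sqcup_{\partial\X_0} \partial\X_1$, followed by the iterated-cofiber comparison with naturality carried through the induction, is exactly that adaptation written out (the universal-property step uses that $\X_0([n-2]) \to \cofib(c_{\X_0})$ is an epimorphism, which holds since it is the coequalizer of $c_{\X_0}$ and $0$).
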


\begin{proof}
  The proof is dual to \autoref{lemma:tofib}.
  
\end{proof}

\begin{lemma}\label{lemma:tocofib_coprod}
  Let $\C$ be a pointed, finitely cocomplete category and $\X \colon \Po([n-1]) \to \C$ an $n$--cube in $\C$.
  Then
  \begin{equation*}
    \tcofib \X \cong \cofib \left( \coprod_{S \subseteq [n-1], |S| = n-1} \X(S) \to \X([n-1]) \right).
  \end{equation*}
\end{lemma}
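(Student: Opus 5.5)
The plan is to dualize the argument for \autoref{lemma:tofib_prod} directly. Write $\delta$ for the canonical map $\colim_{S \subsetneq [n-1]} \X(S) \to \X([n-1])$ and $\delta'$ for the canonical map $\coprod_{|S| = n-1} \X(S) \to \X([n-1])$. By \autoref{lemma:tocofib}, $\tcofib \X \cong \cofib \delta$, so it suffices to show $\cofib \delta \cong \cofib \delta'$.

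We compare the universal properties. A cofiber of a map $g \colon B \to C$ in a pointed category is characterized by the maps $h \colon C \to A$ with $h \circ g = 0$. Both $\delta$ and $\delta'$ have the same codomain $\X([n-1])$. It is therefore enough to show that for every $h \colon \X([n-1]) \to A$ we have $h \circ \delta = 0$ if and only if $h \circ \delta' = 0$. Once this holds, the two cofibers corepresent the same subfunctor of $\mathrm{Hom}(\X([n-1]), -)$ and are canonically isomorphic.

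Suppose first that $h \circ \delta = 0$. For each $S$ with $|S| = n-1$, the component of $\delta'$ at $S$ factors as $\X(S) \to \colim_{T \subsetneq [n-1]} \X(T) \xrightarrow{\delta} \X([n-1])$. Composing with $h$ gives zero on each summand, so $h \circ \delta' = 0$ by the universal property of the coproduct.

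Conversely, suppose $h \circ \delta' = 0$. By the universal property of the colimit, $h \circ \delta = 0$ if and only if $h \circ \X(S \subseteq [n-1]) = 0$ for every $S \subsetneq [n-1]$. Given such an $S$, choose $i \notin S$ and put $S' = [n-1] \setminus \{i\}$, so $S \subseteq S'$ and $|S'| = n-1$. Then
\begin{equation*}
h \circ \X(S \subseteq [n-1]) = h \circ \X(S' \subseteq [n-1]) \circ \X(S \subseteq S').
\end{equation*}
The composite $h \circ \X(S' \subseteq [n-1])$ is the restriction of $h \circ \delta'$ to the summand $\X(S')$, which is zero, so the whole expression vanishes.

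There is no real obstacle here. The only point needing care is the justification that equal sets of annihilating maps give isomorphic cofibers, which is immediate from the Yoneda lemma. The case $n = 1$ is consistent: the coproduct is over the single set $S = \emptyset$, and the statement reduces to the definition.
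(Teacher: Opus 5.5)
Your proposal is correct and matches the paper's proof. The paper proves this lemma as the dual of \autoref{lemma:tofib_prod}: it reduces via \autoref{lemma:tocofib} to showing that a map out of $\X([n-1])$ kills the colimit comparison map if and only if it kills the coproduct map, and your two directions are exactly the duals of its two steps.
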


\begin{proof}
  The proof is dual to \autoref{lemma:tofib_prod}.
\end{proof}

\section{Cross-poset cocalculus}

In this section, we develop a new variant of functor (co)calculus for functors from a poset to a pointed category, based on the theory of cross effects. We develop the dual notions of cross-degree and cross-codegree, and give a construction for the universal cross-(co)degree $n$ approximations.
We further explore some properties of these approximation functors.

\subsection{Cross-(co)degree}

We define the notions of \emph{cross-degree} and \emph{cross-codegree}. The terminology comes from the related notion of \emph{cross effects} from abelian calculus \cite{abelian_calculus}, which describe vanishing total fibers.

\begin{definition}\label{def:crossdeg}
    Let $\C$ be a pointed, finitely complete category, and $P$ a lattice. 
    Let $F \colon P \to \C$ be a functor.
    We say that $F$ is \emph{cross-degree $n$} if, for every strongly bicartesian $(n+1)$--cube $\X \colon \Po([n]) \to P$,
    \begin{equation*}
        \tfib (F \circ \X) \cong 0.
    \end{equation*}
\end{definition}

\begin{definition}\label{def:crosscodeg}
    Let $\C$ be a pointed, finitely cocomplete category, and $P$ a lattice. 
    Let $F \colon P \to \C$ be a functor.
    We say that $F$ is \emph{cross-codegree $n$} if, for every strongly bicartesian $(n+1)$--cube $\X \colon \Po([n]) \to P$,
    \begin{equation*}
        \tcofib (F \circ \X) \cong 0.
    \end{equation*}
\end{definition}

\begin{remark}
    If $F$ is cross-degree $n$, then $F$ is also cross-degree $n+1$.
    To see this, observe that if $\X \colon \Po([n+1]) \to P$ is a strongly bicartesian $(n+2)$--cube, then 
    \begin{align*}
        \tfib(F \circ \X) &\cong \fib \left( \tfib (F \circ \X|_{\langle \emptyset, [n] \rangle}) \to \tfib (F \circ \X|_{\langle \{n+1\}, [n+1]\rangle}) \right) \\
        &\cong \fib(0 \to 0) \cong 0.
    \end{align*}
    Dually, if $F$ is cross-codegree $n$, then $F$ is cross-codegree $n+1$.
\end{remark}

\begin{remark}
    It follows from \autoref{lemma:tofib} that if $F \colon P \to \C$ is degree $n$, then it is cross-degree $n$. Indeed, a functor is degree $n$ if the comparison map 
    \begin{equation*}
      \X(\emptyset) \to \underset{S \subseteq [n-1], S \neq \emptyset}{\lim} \X(S)
    \end{equation*}
    is an isomorphism, which is a strictly stronger condition than its fiber being 0 (in particular, the fiber of any monomorphism is 0).
    Likewise, if $F$ is codegree $n$, then it is cross-codegree $n$.
\end{remark}

\begin{remark}
    One could generalize \autoref{def:crossdeg} (and \autoref{def:crosscodeg}) to the context of functors from a lattice to a model category, by replacing total (co)fibers with total homotopy (co)fibers.
    We remark that, in the case where the target category is a stable model category \cite[Section 7]{hovey}, 
    a functor would then be degree $n$ if and only if it is cross-degree $n$, as in a stable model category, a cube is homotopy cartesian if and only if its total homotopy fiber is 0.
    However, we will restrict ourselves to the context of ordinary limits and colimits in this text, in particular because the construction of the approximation functors presented in \autoref{def:Pn} does not generalize easily to the model category setting.
\end{remark}

\subsection{(co)cross effects}

We introduce the notion of \emph{cross effects}, and the dual notion \emph{cocross effects}, which measures how far a functor is from being cross-(co)degree $n$. This leads to an equivalent condition for a functor being cross-(co)degree $n$.
We restrict to the case where the source poset is a distributive lattice, as this is the setting in which the main theorems of poset (co)calculus, as described in \autoref{sec:prelims}, apply.

\begin{definition}\label{def:cocross_eff}
    Let $\C$ be a pointed, finitely cocomplete category, and $P$ a distributive lattice. 
    Let $F \colon P \to \C$ be a functor.
    The \emph{$n$th cocross effect} of $F$, $\Cr_n F \colon P \to \C$, is defined as
    \begin{equation*}
        \Cr_n F = \cofib ( T_n F \xrightarrow{\varepsilon_n} F). 
    \end{equation*}
\end{definition}

\begin{definition}\label{def:cross_eff}
    Let $\C$ be a pointed, finitely complete category, and $P$ a distributive lattice. 
    Let $F \colon P \to \C$ be a functor.
    The \emph{$n$th cross effect} of $F$, $\Cr^n F \colon P \to \C$, is defined as
    \begin{equation*}
        \Cr^n F = \fib ( F \xrightarrow{\eta_n} T^n F). 
    \end{equation*}
\end{definition}

In functor calculus, particularly abelian functor calculus, the term ``cross effect'' is used to describe a quantity that measures how far a functor is from being degree $n$ \cite{abelian_calculus, abelian_calculus_unbased}.
The following two lemmas thus motivate the terminology in \autoref{def:cocross_eff} and \autoref{def:cross_eff}.

\begin{lemma}\label{lemma:cross_effect_A}
    Let $\C$ be a pointed, finitely cocomplete category, and $P$ a distributive lattice. 
    Let $F \colon P \to \C$ be a functor.
    If $\Cr_n F \cong 0$, then $F$ is cross-codegree $n$.

    Dually,
    let $\C$ be a pointed, finitely complete category, and $P$ a distributive lattice. 
    Let $F \colon P \to \C$ be a functor.
    If $\Cr^n F \cong 0$, then $F$ is cross-degree $n$.
\end{lemma}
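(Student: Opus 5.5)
The plan is to compute the total cofiber of $F \circ \X$ with \autoref{lemma:tocofib_coprod} and then use the fact that $T_nF$ is codegree $n$ (\autoref{theoremA_cat}) to force that cofiber to vanish. The second statement then follows by the formally dual argument, using \autoref{lemma:tofib_prod}, the dual of \autoref{theoremA_cat} for $T^n$, and $\eta_n$ in place of $\varepsilon_n$.

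\textbf{Setup.} Let $\X \colon \Po([n]) \to P$ be strongly bicartesian. By \autoref{lem:codecomp_from_bicartesian}, $\X = \X_{x^0,\dots,x^n}$, where $x^i = \X([n]\setminus\{i\})$ and $v = \X([n])$. By \autoref{lemma:tocofib_coprod},
\begin{equation*}
\tcofib(F\circ\X) \cong \cofib\Big(\coprod_i F(x^i) \to F(v)\Big).
\end{equation*}
Write $q \colon F(v) \to C$ for this cofiber. The map $q$ is an epimorphism, because maps out of $C$ correspond bijectively to maps out of $F(v)$ that kill the coproduct. So it suffices to show that every $g \colon F(v) \to A$ which vanishes on each $F(x^i) \to F(v)$ is zero. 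Applied to $g = q$, this gives $q = 0$; since $q$ is epi, $\id_C = 0$, so $C \cong 0$.

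\textbf{Key step: $g\circ \varepsilon_n(v) = 0$.} By \autoref{theoremA_cat}, $T_nF$ is codegree $n$, so $T_nF\circ \X$ is cocartesian. Hence
\begin{equation*}
T_nF(v) \cong \colim_{S \subsetneq [n]} T_nF(\X(S)).
\end{equation*}
A map out of $T_nF(v)$ is therefore zero as soon as its restriction to each $T_nF(\X(S))$, $S \subsetneq [n]$, is zero. Each such $S$ is contained in some $[n]\setminus\{i\}$, so $T_nF(\X(S)) \to T_nF(v)$ factors through $T_nF(x^i)$. It therefore suffices to check that $g\circ\varepsilon_n(v)\circ T_nF(x^i\le v) = 0$ for each $i$. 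By naturality of $\varepsilon_n$, this composite equals $g\circ F(x^i \le v)\circ \varepsilon_n(x^i)$, which is zero by the assumption on $g$.

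\textbf{Conclusion.} Cofibers in $\Fun(P,\C)$ are computed objectwise, so $\Cr_nF(v) = \cofib(\varepsilon_n(v)) \cong 0$. Since $g\circ\varepsilon_n(v) = 0$, the map $g$ factors through $\cofib(\varepsilon_n(v)) \cong 0$, hence $g = 0$. This is what was needed.

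The only delicate point is the key step, which requires recognising that the cocartesian property of $T_nF\circ\X$ is the right tool. A naive attempt to show directly that each $w \le v$ with $\jdim(w)\le n$ lies below some $x^i$ need not succeed. The rest is formal manipulation of universal properties, and no abelian hypothesis is needed.
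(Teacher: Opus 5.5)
Your proof is correct. It uses the same two inputs as the paper: that $T_nF\circ\X$ has vanishing total cofiber by \autoref{theoremA_cat}, and that $\Cr_nF\cong 0$. What differs is how you combine them.

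The paper forms the $(n+2)$-cube $T_nF\circ\X\to F\circ\X$ and computes its total cofiber in two orders. Along the cube directions first, it gets $\cofib(0\to\tcofib(F\circ\X))$; along the new direction first, it gets $\tcofib(\Cr_nF\circ\X)\cong 0$. That argument is short and dualizes verbatim. However, it relies on the standard fact that iterated total cofibers can be taken in any order. This follows from \autoref{lemma:tocofib} and the commutation of colimits, not directly from the inductive \autoref{def:tocofib}.

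You replace this with a direct universal-property chase at the top vertex $v$. A map $g$ killing every $F(x^i)\to F(v)$ also kills $\varepsilon_n(v)$ by naturality, so it factors through $\Cr_nF(v)\cong 0$. This avoids the interchange of total cofibers altogether. It also needs only $\tcofib(T_nF\circ\X)\cong 0$ rather than full cocartesianness, because a map out of $T_nF(v)$ that kills each $T_nF(x^i)$ factors through that total cofiber (by \autoref{lemma:tocofib_coprod}).

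Your closing remark needs a correction: the ``naive'' route does succeed. In a distributive lattice, a join-irreducible $p\le v = x^a\vee x^b$ satisfies $p=(p\wedge x^a)\vee(p\wedge x^b)$. So $p$ lies below all but at most one of the $x^i$. Hence a join of at most $n$ join-irreducibles lies below some $x^i$, since there are $n+1$ of them. You could therefore kill $g\circ\varepsilon_n(v)$ directly from the colimit formula \eqref{eq:T_n_cat}, without invoking \autoref{theoremA_cat} at all.
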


\begin{proof}
    We prove the first part. The second is dual.

    Suppose that $F \colon P \to \C$ is any functor such that $\Cr_n F$ is 0.
    Let $\X \colon \Po([n]) \to P$ be a strongly bicartesian $(n+1)$-cube. Note that as $T_n F$ is cross-codegree $n$, $\tcofib (T_n F \circ \X) \cong 0$.
    We compute the total cofiber of the $(n+2)$--cube $(T_n F \circ \X \to F \circ \X)$ in two ways.
    Firstly,
    \begin{align*}
        \tcofib (T_n F \circ \X &\to F \circ \X) \cong
        \cofib \left( \tcofib(T_n F \circ \X) \to \tcofib(F \circ \X) \right) \\
        &\cong \cofib \left( 0 \to \tcofib(F \circ \X) \right)
        \cong \tcofib(F \circ \X).
    \end{align*}
    Secondly,
    \begin{align*}
        \tcofib (T_n F \circ \X &\to F \circ \X) \cong
        \tcofib(\cofib(T_n F \circ \X \to F \circ \X)) \\
        &\cong \tcofib (\Cr_n F \circ \X) \cong 0.
    \end{align*}
    Thus, $\tcofib (F \circ \X) \cong 0$. This concludes the proof.
\end{proof}

\begin{lemma}\label{lemma:cross_effect_B}
    Let $\C$ be a pointed, finitely cocomplete category, and $P$ a join-factorization lattice. 
    Let $F \colon P \to \C$ be a functor. If $F$ is cross-codegree $n$, then $\Cr_n F \cong 0$.

    Dually,
    let $\C$ be a pointed, finitely complete category, and $P$ a meet-factorization lattice. 
    Let $F \colon P \to \C$ be a functor. If $F$ is cross-degree $n$, then $\Cr^n F \cong 0$.
\end{lemma}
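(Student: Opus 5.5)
The plan is to show that the cocross effect $G := \Cr_n F$ is itself cross-codegree $n$ and vanishes on $P_{\le n}$. We then prove $G(x) \cong 0$ for all $x$ by induction on $\jdim(x)$. The dual statement follows by the dual argument, with meets, $T^n$, total fibers and \autoref{lemma:tofib_prod} in place of joins, $T_n$, total cofibers and \autoref{lemma:tocofib_coprod}.

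\emph{Step 1: $G$ is cross-codegree $n$.} Let $\X \colon \Po([n]) \to P$ be strongly bicartesian. As in the proof of \autoref{lemma:cross_effect_A}, compute the total cofiber of the $(n+2)$-cube $(T_nF\circ\X \to F\circ\X)$ in two ways. On one hand it equals $\tcofib(\Cr_nF\circ\X)$, since total cofibers commute with objectwise cofibers. On the other hand it equals
\begin{equation*}
\cofib\left(\tcofib(T_nF\circ\X)\to\tcofib(F\circ\X)\right).
\end{equation*}
Both $\tcofib(T_nF\circ\X)$ and $\tcofib(F\circ\X)$ vanish: the first by \autoref{theoremA_cat} (codegree $n$ implies cross-codegree $n$), the second by hypothesis. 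Hence $\tcofib(G\circ\X)\cong \cofib(0\to 0)\cong 0$.

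\emph{Step 2: $G$ vanishes on $P_{\le n}$.} For $v\in P_{\le n}$, the colimit \eqref{eq:T_n_cat} defining $T_nF(v)$ is indexed by a category with terminal object $v$. So $\varepsilon_n$ is an isomorphism at $v$, and $G(v)\cong 0$.

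\emph{Step 3: induction on join-dimension.} Since $P$ is a join-factorization lattice, $\jdim(x)=\jdim(f(x))$ for some order-preserving $f\colon P\to Q$ into a distributive lattice $Q$ with the descending chain condition. By \autoref{thm:Birkhoff}, $\jdim(f(x))$ is finite, so every $x\in P$ has finite join-dimension and induction on $\jdim$ is legitimate. Take $x$ with $k=\jdim(x)>n$, and assume $G$ vanishes on all elements of smaller join-dimension. Write $x=p^0\vee\dots\vee p^{k-1}$ with the $p^j$ join-irreducible. Since $k\ge n+1$, we can partition $\{0,\dots,k-1\}$ into $n+1$ nonempty blocks $B_0,\dots,B_n$. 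Set
\begin{equation*}
x^i=\bigvee_{j\notin B_i}p^j .
\end{equation*}
For $i\ne j$, the blocks other than $B_i$ together with those other than $B_j$ cover everything, so $x^i\vee x^j=x$. Thus $x^0,\dots,x^n$ is a pairwise cover of $x$, and by \autoref{lem:bicartesian_from_codecomp} the cube $\X_{x^0,\dots,x^n}$ is strongly bicartesian. Each $x^i$ is a join of at most $k-|B_i|<k$ join-irreducibles, so $\jdim(x^i)<k$ and hence $G(x^i)\cong0$ by induction. By \autoref{lemma:tocofib_coprod} and Step 1,
\begin{equation*}
0\cong\tcofib(G\circ\X_{x^0,\dots,x^n})\cong\cofib\left(\textstyle\coprod_i G(x^i)\to G(x)\right)\cong\cofib(0\to G(x))\cong G(x).
\end{equation*}
This completes the induction, and therefore $\Cr_nF\cong0$.

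The main point needing care is Step 3. It requires the well-foundedness of the induction, which is exactly where the join-factorization hypothesis enters. It also requires that the size-$(n+1)$ cube only needs its top vertex and its codimension-one vertices controlled, which \autoref{lemma:tocofib_coprod} guarantees. The other vertices of the cube need not have small join-dimension, but they never enter the computation.
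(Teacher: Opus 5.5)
Your proof is correct and follows the paper's argument. Both use induction on join-dimension, build a pairwise cover of $x$ from a join-irreducible decomposition, compute the total cofiber of the $(n+2)$-cube $T_nF\circ\X \to F\circ\X$ in two ways, and apply \autoref{lemma:tocofib_coprod} to reduce to the top and codimension-one vertices. The differences are only organizational: you isolate ``$\Cr_nF$ is cross-codegree $n$'' as a separate step, and you use a block partition where the paper takes $y_i=\bigvee_{j\neq i}p_j$ for $i=0,\dots,n$.
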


\begin{proof}
    We prove the first part. The second is dual.
    
    Suppose that $F \colon P \to \C$ is cross-codegree $n$.
    We wish to show that $\Cr_nF(x) \cong 0$ for all $x \in P$.
    Note that as $P$ is a join-factorization lattice, every element has finite join-dimension.
    We proceed by induction on join-dimension.

    First, observe that if $x \in P$ with $\jdim(x) \le n$, then
    \begin{equation*}
        \Cr_nF(x) \cong \cofib(T_n F(x) \to F(x)) \cong \cofib(F(x) \to F(x)) \cong 0.
    \end{equation*}

    Now, suppose that $x \in P$ with $\jdim(x) = m > n$, and suppose that $\Cr_nF(y) \cong 0$ for all $y \in P$ with $\jdim(y) < m$.
    Let $x = p_0 \vee \dots \vee p_{m-1}$ be the unique minimal join-decomposition into join-irreducibles of $x$.
    Then, the set $\{y_0, \dots, y_{m-1}\}$ defined by
    \begin{equation*}
        y_i = \bigvee_{0 \le j \le m-1, j \neq i} p_j,
    \end{equation*}
    is a pairwise cover of $x$, and each $y_i$ has join-dimension $m-1$. Furthermore, the subset $\{y_0, \dots, y_n\}$ is also a pairwise cover of $x$.
    We compute the total cofiber of the $(n+2)$-cube
    \begin{equation*}
        \Yc = \big((T_n F \circ \X_{y_0, \dots, y_n}) \to (F \circ \X_{y_0, \dots, y_n})\big)
    \end{equation*}
    in two different ways.

    Firstly,
    \begin{align*}
        \tcofib \Yc &\cong \cofib\big(\tcofib (T_n F \circ \X_{y_0, \dots, y_n}) \to \tcofib (F \circ \X_{y_0, \dots, y_n}) \big) \\
        &\cong \cofib(0 \to 0) \cong 0,
    \end{align*}
    where we use that $F$ is cross-codegree $n$, and that $T_n F$ is codegree $n$, and thus also cross-codegree $n$.

    Secondly,
    \begin{align*}
        \tcofib \Yc &\cong \tcofib \big( \cofib\big((T_n F \circ \X_{y_0, \dots, y_n}) \to (F \circ \X_{y_0, \dots, y_n}) \big) \big) \\
        &\cong \tcofib (\Cr_{n} F \circ \X_{y_0, \dots, y_n})
        \cong \cofib\big(\coprod_{0 \le i \le n} \Cr_n F(y_i) \to \Cr_n F(x)\big) \\
        &\cong \cofib \big(0 \to \Cr_n F(x)\big) \cong \Cr_n F(x).
    \end{align*}
    This shows that $\Cr_n F(x) \cong 0$, as desired.
    This concludes the induction step.

\end{proof}

\begin{remark}
    Note that being a join-factorization lattice is a strictly stronger condition than being a distributive lattice.
    The fact that the stronger condition of being a join-factorization lattice is required for \autoref{lemma:cross_effect_B} is analogous to it being required for \cite[Theorem B]{hem2025posetfunctorcocalculusapplications}, while \cite[Theorem A]{hem2025posetfunctorcocalculusapplications} holds for all distributive lattices.
\end{remark}

\begin{lemma}\label{lemma:Tn_preserve_cross_degree}
    Let $\C$ be a pointed, finitely cocomplete category, and $P$ a join-factorization lattice. 
    If $F$ is cross-codegree $n$, then $T_k F$ is cross-codegree $n$ for all $k \ge 0$.
    
    Dually, 
    let $\C$ be a pointed, finitely complete category, and $P$ a meet-factorization lattice. 
    If $F$ is cross-degree $n$, then $T^k F$ is cross-degree $n$ for all $k \ge 0$.
\end{lemma}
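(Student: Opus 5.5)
The plan is to go through cocross effects. I will show that $\Cr_n(T_k F) \cong 0$ and then apply \autoref{lemma:cross_effect_A}. That lemma holds for any distributive lattice, and a join-factorization lattice is distributive. To get the hypothesis, note that since $P$ is a join-factorization lattice and $F$ is cross-codegree $n$, \autoref{lemma:cross_effect_B} gives $\Cr_n F = \cofib(T_n F \to F) \cong 0$. So it suffices to show that $T_k$ commutes with $\Cr_n$, meaning $\Cr_n T_k F \cong T_k \Cr_n F$. Then $T_k \Cr_n F \cong T_k 0 \cong 0$, because $T_k$ is a left Kan extension and so preserves the initial (zero) object.

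First, $T_k$ commutes with colimits in $\Fun(P,\C)$, as remarked after \eqref{eq:T_n_cat}, and a cofiber is a pushout. Hence
\begin{equation*}
T_k \Cr_n F \cong \cofib\big(T_k T_n F \xrightarrow{T_k \varepsilon_n} T_k F\big).
\end{equation*}
It then remains to identify $T_k T_n F \to T_k F$ with $\varepsilon_n \colon T_n T_k F \to T_k F$. For this I will show that $T_k T_n \cong T_{\min(k,n)} \cong T_n T_k$, compatibly with the counits.

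Let $m = \min(k,n)$, so that $P_{\le m} \subseteq P_{\le \max(k,n)}$. On one side, for $v \in P_{\le m}$ we have $T_{m'} F(v) \cong F(v)$ for $m' \ge m$ via the counit. The reason is that $v$ is then terminal in the indexing poset $\{u \in P_{\le m'} : u \le v\}$ of \eqref{eq:T_n_cat}. Hence $T_m T_{m'} F \cong T_m F$. On the other side, $T_m F$ is the left Kan extension from $P_{\le m}$. Left Kan extensions compose along $P_{\le m} \hookrightarrow P_{\le m'} \hookrightarrow P$, and restricting the extension back to $P_{\le m'}$ and re-extending recovers it (the inclusion is fully faithful). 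So $T_{m'} T_m F \cong T_m F$. Taking $\{m,m'\} = \{k,n\}$ handles both orders of composition.

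The main obstacle is checking that these identifications are compatible with the relevant maps. Concretely, under $T_k T_n F \cong T_m F \cong T_n T_k F$, the map $T_k \varepsilon_n$ and the counit $\varepsilon_n$ of $T_k F$ must both become the canonical map $T_m F \to T_k F$. I would verify this objectwise using the colimit formula \eqref{eq:T_n_cat}. All the maps involved are induced by the structure maps $F(v) \to F(x)$ for $v \le x$ with $\jdim(v) \le m$, so they agree on each colimit cocone component. Granting this, we get $\Cr_n T_k F \cong T_k \Cr_n F \cong 0$, and \autoref{lemma:cross_effect_A} finishes the proof. The dual statement follows by the dual argument, using $\Cr^n$, $T^k$, \autoref{lemma:cross_effect_B} for meet-factorization lattices, and the fact that $T^k$ commutes with limits.
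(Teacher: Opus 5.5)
Your proposal is correct and follows essentially the paper's own route: you prove $\Cr_n(T_kF)\cong T_k\Cr_nF\cong T_k0\cong 0$ using \autoref{lemma:cross_effect_B}, the fact that $T_k$ commutes with colimits, the identification $T_nT_k\cong T_kT_n$, and then \autoref{lemma:cross_effect_A}. The only differences are cosmetic: the paper splits into the cases $k\le n$ (where $T_nT_kF\cong T_kF$ settles it directly, without using the hypothesis on $F$) and $k>n$, whereas you treat both cases uniformly through $T_{\min(k,n)}$ and check more explicitly that the identifications are compatible with the counit maps.
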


\begin{proof}
    We prove the first part. The second is dual.

    We use \autoref{lemma:cross_effect_A} and \autoref{lemma:cross_effect_B}. 
    First, suppose that $k \le n$. Then,
    \begin{align*}
        \Cr_n (T_k F) &\cong \cofib (T_n T_k F \to T_k F) \cong \cofib(T_k F \to T_k F) \cong 0.
    \end{align*}
    Next, suppose that $k > n$. Then,
    \begin{align*}
        \Cr_n (T_k F) &\cong \cofib (T_n T_k F \to T_k F) \cong \cofib(T_n F \to T_k F) \\
        &\cong \cofib (T_k T_n F \to T_k F).
    \end{align*}
    As $T_k$ commutes with colimits, this is isomorphic to
    \begin{align*}
        T_k \cofib (T_n F \to F)
        \cong T_k \Cr_n F \cong T_k 0 \cong 0.
    \end{align*}
\end{proof}

\subsection{The Taylor (co)tower of cross-(co)degree $n$ approximations}

We now restrict our attention to abelian categories.
We will derive a formula for the universal cross-codegree $n$ approximation of a functor from a join-factorization lattice to an abelian category (as well as the dual notion for cross-degree).

We let $\ker f$ and $\coker f$ denote the kernel and cokernel, respectively, of a morphism $f$. Note that in an abelian category, a kernel is the same as a fiber, and a cokernel is the same as a cofiber.

\begin{definition}\label{def:Pn}
    Let $P$ be a distributive lattice, $\A$ an abelian category and $F \colon P \to \A$ a functor.

    The \emph{cross-codegree $n$ approximation}, $\Gamma_n F \colon P \to \A$, is defined as
    \begin{equation*}
        \Gamma_n F = \Ima(T_n F \to F).
    \end{equation*}

    The \emph{cross-degree $n$ approximation}, $\Gamma^n F \colon P \to \A$, is defined as
    \begin{equation*}
        \Gamma^n F = \Ima(F \to T^n F).
    \end{equation*}
\end{definition}

The cross-codegree $n$ approximations assemble into a cotower, illustrated in the following commutative diagram.
\begin{center}
    \begin{tikzcd}
    \vdots \arrow[r, two heads]          & \vdots \arrow[rddd, tail]               &   \\
    T_2 F \arrow[u] \arrow[r, two heads] & \Gamma_2 F \arrow[u, tail] \arrow[rdd, tail] &   \\
    T_1 F \arrow[u] \arrow[r, two heads] & \Gamma_1 F \arrow[rd, tail] \arrow[u, tail]  &   \\
    T_0 F \arrow[r, two heads] \arrow[u] & \Gamma_0 F \arrow[r, tail] \arrow[u, tail]   & F
    \end{tikzcd}
\end{center}
From the definition of $\Gamma_n F$, it follows that the natural transformation $T_n F \to \Gamma_n F$ is an epimorphism, and that the natural transformation $\Gamma_n F \to F$ is a monomorphism. Furthermore, for every $m > n$, the natural transformation $\Gamma_n F \to \Gamma_m F$ is a monomorphism, as the natural transformation $\Gamma_n F \to \Gamma_m F \to F$ is a monomorphism.

Dually, the cross-degree $n$ approximations assemble into a tower. The natural transformation $\Gamma^n F \to T^n F$ is a monomorphism, and the natural transformations $F \to \Gamma^n F$ and $\Gamma^m F \to \Gamma^n F$ are epimorphisms.

\begin{lemma}
    Let $P$ be a distributive lattice and $\A$ an abelian category. Then,
    \begin{itemize}
        \item $\Gamma_n$ defines an endofunctor $\Gamma_n \colon \Fun(P, \A) \to \Fun(P, \A)$.
        \item $\Gamma^n$ defines an endofunctor $\Gamma^n \colon \Fun(P, \A) \to \Fun(P, \A)$.
    \end{itemize}
\end{lemma}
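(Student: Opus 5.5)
The plan is to show that $\Gamma_n$ is well defined on morphisms and respects identities and composition. The case of $\Gamma^n$ is formally dual: replace $T_n$, $\varepsilon_n$ by $T^n$, $\eta_n$ and reverse the arrows. Throughout, I use that $\Fun(P,\A)$ is abelian, with kernels, cokernels and images computed objectwise. In particular, $\Ima$ of a natural transformation is a functor $P \to \A$, and for a natural transformation $\alpha$ with components $\alpha_x$ one has $(\Ima \alpha)(x) = \Ima(\alpha_x)$. Hence $\Gamma_n F$ is a well-defined object of $\Fun(P,\A)$.

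For the action on morphisms, let $\phi \colon F \to G$ be a natural transformation. Since $T_n$ is a functor on $\Fun(P,\A)$ (left Kan extension of the restriction) and $\varepsilon_n$ is natural in $F$, we get a commutative square in $\Fun(P,\A)$:
\begin{equation*}
\begin{tikzcd}
T_n F \arrow[r, "\varepsilon_n"] \arrow[d, "T_n\phi"'] & F \arrow[d, "\phi"] \\
T_n G \arrow[r, "\varepsilon_n"] & G.
\end{tikzcd}
\end{equation*}
Image factorizations in an abelian category are functorial on the arrow category. Concretely, factor $\varepsilon_n = m_F \circ e_F$ with $e_F$ epi and $m_F$ mono. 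Then
\begin{equation*}
\phi \circ m_F \circ e_F = m_G \circ e_G \circ T_n\phi .
\end{equation*}
Orthogonality of epimorphisms to monomorphisms gives a unique diagonal $\Gamma_n\phi \colon \Gamma_n F \to \Gamma_n G$ satisfying $m_G \circ \Gamma_n\phi = \phi \circ m_F$ and $\Gamma_n\phi \circ e_F = e_G \circ T_n\phi$. Uniqueness follows simply because $m_G$ is a monomorphism. Thus $\Gamma_n\phi$ is the unique map such that $m_G\circ\Gamma_n\phi=\phi\circ m_F$. Existence can also be seen directly: the composite $\phi \circ m_F$ factors through $\ker(\coker m_G)$. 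To check this, precompose with the epi $e_F$ and use the square above.

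Functoriality then follows from uniqueness. The identity $\id_{\Gamma_n F}$ satisfies $m_F \circ \id = \id_F \circ m_F$, so $\Gamma_n(\id_F) = \id$. For composable $\phi \colon F\to G$ and $\psi \colon G \to H$, the map $\Gamma_n\psi\circ\Gamma_n\phi$ satisfies
\begin{equation*}
m_H \circ \Gamma_n\psi \circ \Gamma_n\phi = \psi \circ m_G \circ \Gamma_n\phi = \psi\circ\phi\circ m_F ,
\end{equation*}
so it equals $\Gamma_n(\psi\circ\phi)$.

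The only step needing care is the existence of the diagonal filler, i.e. the naturality of image factorization. I expect this to be routine in an abelian category, and I would cite it or verify it via the kernel/cokernel description above. The dual argument for $\Gamma^n$ uses naturality of $\eta_n \colon F \to T^n F$ and the same orthogonality. There the uniqueness of $\Gamma^n\phi$ comes from the epimorphism $F \to \Gamma^n F$.
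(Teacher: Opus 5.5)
Your proof is correct and follows the paper's approach. The paper's proof only notes that $T_n = \Lan_i \circ i^*$ is an endofunctor of $\Fun(P,\A)$, so $\Gamma_n = \Ima(T_n \to \id)$ is one too. You supply the details the paper leaves implicit: naturality of $\varepsilon_n$, the epi--mono diagonal filler, and the uniqueness argument that gives preservation of identities and composites.
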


\begin{proof}
We prove the first part. The second is dual.

Let $i$ denote the inclusion $i \colon P_{\le n} \hookrightarrow P$. As $T_n = \Lan_i \circ i^*$, $T_n$ is an endofunctor on $\Fun(P, \A)$. Thus, $\Gamma_n = \Ima(T_n \to \id_{\Fun(P, \A)})$ is also an endofunctor on $\Fun(P, \A)$.
\end{proof}

Not only are $\Gamma_n$ and $\Gamma^n$ endofunctors, but they are a comonad and a monad respectively.

\begin{definition}
    An \emph{idempotent monad} is a monad $(T, \mu, \eta)$ satisfying one of the following equivalent conditions \cite[Proposition 4.2.3]{borceux1994handbook}.
    \begin{enumerate}
        \item $\mu \colon TT \to T$ is an isomorphism.
        \item The maps $T(\eta), \eta_T \colon T \to TT$ are equal.
    \end{enumerate}
    
    An \emph{idempotent comonad} is the dual concept.
\end{definition}

\begin{proposition}\label{prop:idempotent_monad}
    Let $P$ be a distributive lattice and $\A$ an abelian category. Then,
    \begin{itemize}
        \item The endofunctor $\Gamma_n \colon \Fun(P, \A) \to \Fun(P, \A)$ admits the structure of an idempotent comonad.
        \item The endofunctor $\Gamma^n \colon \Fun(P, \A) \to \Fun(P, \A)$ admits the structure of an idempotent monad.
    \end{itemize}
\end{proposition}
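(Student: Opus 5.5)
The plan is to treat only $\Gamma_n$; the statement for $\Gamma^n$ follows by the dual argument. The counit will be the canonical monomorphism $\iota \colon \Gamma_n F \to F$. An idempotent comonad is the same as a comonad whose comultiplication $\delta \colon \Gamma_n \to \Gamma_n\Gamma_n$ is an isomorphism. So it suffices to prove two things: that $\iota_{\Gamma_n F} \colon \Gamma_n\Gamma_n F \to \Gamma_n F$ is an isomorphism, and that $\Gamma_n(\iota_F) = \iota_{\Gamma_n F}$ as maps $\Gamma_n\Gamma_n F \to \Gamma_n F$. Given these, we set $\delta = (\iota_{\Gamma_n F})^{-1}$. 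Coassociativity and the counit laws then reduce to identities between inverses of the same isomorphisms; this is the dual of the standard criterion (\cite[Proposition 4.2.3]{borceux1994handbook}).

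The key computation is $T_n \Gamma_n F \cong T_n F$, compatibly with the maps. Since $T_n F = \Lan_i i^* F$ with $i \colon P_{\le n} \hookrightarrow P$, the functor $T_n$ depends only on the restriction to $P_{\le n}$. On $P_{\le n}$ the map $\varepsilon_n \colon T_nF \to F$ is the identity, because $v$ is terminal in $\{w \in P_{\le n} : w \le v\}$ for $v \in P_{\le n}$. Hence $i^*\Gamma_n F = \Ima(\id) = i^*F$, and $i^*\iota_F$ is the identity.

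Applying $\Lan_i$ shows that $T_n(\iota_F) \colon T_n\Gamma_n F \to T_n F$ is an isomorphism. By naturality of $\varepsilon_n$, we get $\iota_F \circ \varepsilon_{n,\Gamma_nF} = \varepsilon_{n,F} \circ T_n(\iota_F)$. The map $\varepsilon_{n,\Gamma_n F} \colon T_n\Gamma_n F \to \Gamma_nF$ is therefore identified, via this isomorphism, with the factorization $T_nF \twoheadrightarrow \Gamma_nF$, which is an epimorphism. Its image is then all of $\Gamma_n F$, so $\iota_{\Gamma_nF} \colon \Gamma_n\Gamma_nF \to \Gamma_nF$ is an isomorphism.

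For the second identity, apply $\Gamma_n$ functorially to $\iota_F$; this uses the induced map on images from the square formed by $T_n(\iota_F)$ and $\iota_F$. Composing with the monomorphism $\iota_F$, both $\iota_F\circ\Gamma_n(\iota_F)$ and $\iota_F\circ\iota_{\Gamma_nF}$ equal $\iota_F \circ \iota_{\Gamma_n F}$ by naturality of $\iota$. Since $\iota_F$ is monic, the two maps coincide.

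The main obstacle is bookkeeping rather than mathematics. One must verify that the image construction is functorial and that naturality squares commute as claimed, in particular that $T_n$ applied to a monomorphism agreeing with the identity on $P_{\le n}$ really yields an isomorphism compatible with $\varepsilon_n$. These facts follow from the universal property of left Kan extensions.
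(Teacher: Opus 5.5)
Your proof is correct and follows the paper's argument. The paper writes it out for $\Gamma^n$ and leaves $\Gamma_n$ as dual, but the steps are the same: show the whiskered counit $\iota_{\Gamma_n F}$ is an isomorphism, show $\Gamma_n(\iota_F)=\iota_{\Gamma_n F}$ by cancelling the monomorphism $\iota_F$ in a naturality square, and take the inverse as the comultiplication.

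The one minor difference is how you show $\varepsilon_{n,\Gamma_nF}$ is an epimorphism. You use that $T_n(\iota_F)$ is invertible because $\iota_F$ is an isomorphism on $P_{\le n}$. The paper's argument, dualized, instead uses naturality of $\varepsilon_n$ at the epimorphism $T_nF \twoheadrightarrow \Gamma_nF$ together with idempotency of $T_n$ (that is, $\varepsilon_{n,T_nF}$ is invertible), exactly as in the proof of \autoref{thm:theoremA}. Both facts come from $P_{\le n}\hookrightarrow P$ being fully faithful.
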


\begin{proof}
We prove the second part. The first is dual.

Observe first that $T^n$ is an idempotent monad. It's a monad because $T^n = \Ran_i \circ i^*$, where $i$ is the inclusion $i \colon P^{\le n} \hookrightarrow P$. In other words, $T^n$ is the monad induced from the adjoint pair $(i^*, \Ran_i)$.
Furthermore, as $i$ is a fully faithful functor, the counit $i^* \circ \Ran_i \to \id$ is an isomorphism \cite[Corollary 6.3.9]{riehl_category_theory_in_context}.
Thus, $T^n$ is an idempotent monad, as the multiplication $T^n \circ T^n \to T^n$ is the natural isomorphism
\begin{equation*}
    \Ran_i \circ i^* \circ \Ran_i \circ i^* = \Ran_i \circ (i^* \circ \Ran_i) \circ i^* \xrightarrow{\cong}  \Ran_i \circ (\id) \circ i^* = T^n.
\end{equation*}

Let $\eta$ denote the unit $\eta \colon \id_{\Fun(P, \A)} \to T^n$.
Since $\Gamma^n = \Ima(\id_{\Fun(P, \A)} \to T^n)$, there are induced natural transformations
\begin{equation*}
    \bar \eta\colon \id_{\Fun(P, \A)} \to \Gamma^n,
\end{equation*}
and
\begin{equation*}
    \iota \colon \Gamma^n \to T^n.
\end{equation*}
Furthermore, $\bar \eta$ is an objectwise epimorphism, and $\iota$ is an objectwise monomorphism.

Our strategy is as follows. We first show that $\bar \eta_{\Gamma^n} \colon \Gamma^n \to \Gamma^n \Gamma^n$ is an isomorphism, and that $\Gamma^n \bar \eta = \bar \eta_{\Gamma^n}$.
We then choose $(\bar \eta_{\Gamma^n})^{-1}$ to be our monad multiplication, and show that the triple $(\Gamma^n, (\bar \eta_{\Gamma^n})^{-1}, \bar \eta)$ defines an idempotent monad.

To show that, $\bar \eta_{\Gamma^n}$ is an isomorphism, it suffices to show that for all $F \colon P \to \A$, the map $\eta_{\Gamma^n F} \colon \Gamma^n F \to T^n \Gamma^n F$ is a monomorphism (as $\Gamma^n \Gamma^n F$ is the image of this map).
Consider the commutative diagram,
\begin{center}
    \begin{tikzcd}[column sep=4em, row sep=3em]
    \Gamma^n F \arrow[r, tail, "\iota_F"] \arrow[d, "\eta_{\Gamma^nF}"] & T^n F \arrow[d, "\cong"', "\eta_{T^n F}"] \\
    T^n \Gamma^n F \arrow[r, "T^n(\iota_F)"]        & T^n T^n F.
    \end{tikzcd} 
\end{center}
As $T^n$ is an idempotent monad, $\eta_{T^n F}$ is an isomorphism.
Thus, as $\iota_F$ is a monomorphism, it follows that $\eta_{\Gamma^n F}$ is a monomorphism, as desired.

We now show that $\Gamma^n \bar \eta = \bar\eta_{\Gamma^n}$. Let $F \colon P \to \A$.
Using naturality of $\bar \eta$, we get a commutative diagram,
\begin{center}
\begin{tikzcd}[column sep=4em, row sep=3em]
F \arrow[r, "\bar \eta_F"] \arrow[d, "\bar \eta_{F}"]             & \Gamma^n F \arrow[d, "\bar \eta_{\Gamma^n F}"]               \\
\Gamma^n F  \arrow[r, "\Gamma^n \bar \eta_F"] & \Gamma^n \Gamma^n F.
\end{tikzcd}
\end{center}
Now, as $\bar \eta_F$ is an epimorphism, $\Gamma^n \bar \eta_F = \bar \eta_{\Gamma^n F}$.

Finally, we show that $(\Gamma^n, (\bar \eta_{\Gamma^n})^{-1}, \bar \eta)$ is indeed a monad.
We need to check that the following two diagrams commute.
\begin{center}
\begin{tikzcd}[column sep=4em, row sep=3em]
\Gamma^n \Gamma^n \Gamma^n \arrow[r, "\Gamma^n (\bar \eta_{\Gamma^n})^{-1}"] \arrow[d, "((\bar \eta_{\Gamma^n})^{-1})_{\Gamma^n}"'] & \Gamma^n \Gamma^n \arrow[d, "(\bar \eta_{\Gamma^n})^{-1}"] \\
\Gamma^n \Gamma^n \arrow[r, "(\bar \eta_{\Gamma^n})^{-1}"]                                           & \Gamma^n                                   
\end{tikzcd}
\begin{tikzcd}[column sep=4em, row sep=3em]
\Gamma^n \arrow[r, "\bar \eta_{\Gamma^n}"] \arrow[rd, "="'] & \Gamma^n \Gamma^n \arrow[d, "(\bar \eta_{\Gamma^n})^{-1}"] & \Gamma^n \arrow[l, "\Gamma^n \bar \eta"'] \arrow[ld, "="] \\
                                                  & \Gamma^n                                         &                                                
\end{tikzcd}
\end{center}
The first diagram commutes because
\begin{align*}
    &(\bar \eta_{\Gamma^n})^{-1} \circ ((\bar \eta_{\Gamma^n})^{-1})_{\Gamma^n}
    = (\bar \eta_{\Gamma^n})^{-1} \circ (\bar \eta_{\Gamma^n \Gamma^n})^{-1}
    = \left( \bar \eta_{\Gamma^n\Gamma^n} \circ \bar \eta_{\Gamma^n}\right)^{-1} \\
    = &\left( \Gamma^n \bar \eta_{\Gamma^n} \circ \bar \eta_{\Gamma^n}\right)^{-1}
    = ( \bar \eta_{\Gamma^n})^{-1} \circ (\Gamma^n \bar \eta_{\Gamma^n})^{-1}
    = ( \bar \eta_{\Gamma^n})^{-1} \circ \Gamma^n (\bar \eta_{\Gamma^n})^{-1}.
\end{align*}
The second diagram commutes because
\begin{equation*}
    (\bar \eta_{\Gamma^n})^{-1} \circ \Gamma^n \bar \eta
    = (\bar \eta_{\Gamma^n})^{-1} \circ \bar \eta_{\Gamma^n}
    = \id.
\end{equation*}
\end{proof}

\begin{theorem}[Theorem A]\label{thm:theoremA}
    Let $P$ be a distributive lattice and $\A$ an abelian category. If $F \colon P \to \A$ is a functor, then
    \begin{itemize}
        \item $\Gamma_n F$ is cross-codegree $n$, and
        \item $\Gamma^n F$ is cross-degree $n$.
    \end{itemize}
\end{theorem}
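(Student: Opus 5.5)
We prove the first part; the second is dual, since in an abelian category the total fiber and the image are self-dual notions. Fix a strongly bicartesian $(n+1)$-cube $\X \colon \Po([n]) \to P$. By \autoref{lemma:bicartesian_from_codecomp} and \autoref{lemma:codecomp_from_bicartesian}, $\X = \X_{x^0, \dots, x^n}$ for the pairwise cover $x^i = \X([n]\setminus\{i\})$ of $v = \X([n])$. By \autoref{lemma:tocofib_coprod}, we must show that
\begin{equation*}
\bigoplus_{i=0}^{n} \Gamma_n F(x^i) \to \Gamma_n F(v)
\end{equation*}
is an epimorphism in $\A$.

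The key input is \autoref{theoremA_cat}: $T_n F$ is codegree $n$, hence cross-codegree $n$. Again by \autoref{lemma:tocofib_coprod}, this means the map $\bigoplus_i T_n F(x^i) \to T_n F(v)$ is an epimorphism. We then use the commutative square
\begin{center}
\begin{tikzcd}
\bigoplus_i T_n F(x^i) \arrow[r, two heads] \arrow[d, two heads] & T_n F(v) \arrow[d, two heads] \\
\bigoplus_i \Gamma_n F(x^i) \arrow[r] & \Gamma_n F(v).
\end{tikzcd}
\end{center}
Here the vertical maps are epimorphisms because $T_n F \to \Gamma_n F$ is objectwise epi, and a finite direct sum of epimorphisms is epi. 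The top map is epi by the previous paragraph. The composite along the top and right is therefore epi, so the bottom map is epi as well. Its cokernel, which is $\tcofib(\Gamma_n F \circ \X)$, thus vanishes.

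For the dual statement, apply the same argument in $\A^{\op}$ with $T^n$ in place of $T_n$. By \autoref{lemma:tofib_prod}, the total fiber is the kernel of $\Gamma^n F(\X(\emptyset)) \to \prod_i \Gamma^n F(\X(\{i\}))$. The dual of \autoref{theoremA_cat} says this map is mono for $T^n F$. Since $\Gamma^n F \to T^n F$ is objectwise mono, a finite product of monos is mono, and so the map is mono for $\Gamma^n F$.

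The main subtlety is checking that the total cofiber is computed by the coproduct formula of \autoref{lemma:tocofib_coprod}, which only involves the top vertex and its facets, and that the square commutes. Commutativity follows from naturality of $T_n F \to \Gamma_n F$. No further obstacle is expected: the argument uses only that epimorphisms are preserved under finite direct sums and that "epi after epi implies the second map is epi."
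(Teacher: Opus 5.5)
Your proof is correct, but it takes a more direct route than the paper's.

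\textbf{The paper's route.} The paper never examines total cofibers of $\Gamma_nF$ directly. Instead it proves $\Cr_n(\Gamma_nF)\cong 0$, that is, that $T_n\Gamma_nF\to\Gamma_nF$ is an epimorphism. It does this by comparing, via naturality of $\varepsilon_n$, with $T_nT_nF\cong T_nF\twoheadrightarrow\Gamma_nF$, which uses idempotence of $T_n$. It then invokes \autoref{lemma:cross_effect_A}. That lemma turns a vanishing cocross effect into cross-codegree $n$ by computing the total cofiber of the $(n+2)$-cube $(T_nG\circ\X\to G\circ\X)$ in two ways.

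\textbf{Your route.} You bypass both the idempotence of $T_n$ and that lemma. You use the facet formula of \autoref{lemma:tocofib_coprod} together with the epimorphism $T_nF\twoheadrightarrow\Gamma_nF$ directly. Your argument in fact shows something reusable: any objectwise quotient of a cross-codegree $n$ functor into an abelian category is cross-codegree $n$. Dually, any subfunctor of a cross-degree $n$ functor is cross-degree $n$.

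\textbf{What the paper's route buys.} It yields the formally stronger conclusion $\Cr_n(\Gamma_nF)\cong 0$. Its converse, \autoref{lemma:cross_effect_B}, is only proved for join-factorization lattices, and it fails over general distributive lattices. For example, on the total order $\mathbb{Z}$ there are no elements of join-dimension $0$, so $T_0F=0$. A nonzero constant functor is then cross-codegree $0$ but has $\Cr_0F\neq 0$.

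\textbf{Minor points.}
\begin{enumerate}
\item The identification $\X=\X_{x^0,\dots,x^n}$ is not needed.
\item Neither is the fact that the left vertical map is epi: only the top map and the right vertical map at $v$ are used.
\item Dually, only the monomorphisms $\Gamma^nF(\X(\emptyset))\to T^nF(\X(\emptyset))$ and $T^nF(\X(\emptyset))\to\prod_iT^nF(\X(\{i\}))$ are used.
\end{enumerate}
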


\begin{proof}
    We prove the first part. The second is dual.

    By \autoref{lemma:cross_effect_A}, it suffices to show that $\Cr_n (\Gamma_n F) \cong 0$.
    By definition, $\Gamma_n F = \Ima(T_n F \to F)$. Recall that there is a canonical natural epimorphism $T_n F \to \Gamma_n F$. By naturality of $T_n \to \id$, we get a commutative diagram,
    \begin{center}
    \begin{tikzcd}
    T_n T_n F \arrow[d, "\cong"] \arrow[r] & T_n \Gamma_n F. \arrow[d] \\
    T_n F \arrow[r, two heads]             & \Gamma_n F,
    \end{tikzcd}
    \end{center}
    where the left hand vertical map is an isomorphism.
    It follows that the map $T_n \Gamma_n F \to \Gamma_n F$ is an epimorphism. Hence, $\Cr_n (\Gamma_n F) \cong \coker(T_n \Gamma_n F \to \Gamma_n F) \cong 0$.
\end{proof}

\begin{theorem}[Theorem B]\label{thm:theoremB}
    Let $P$ be a join-factorization lattice, $\A$ an abelian category and $F \colon P \to \A$ a functor.
    If $F$ is cross-codegree $n$, then $\Gamma_n F \cong F$.

    Dually,
    let $P$ be a meet-factorization lattice, $\A$ an abelian category and $F \colon P \to \A$ a functor.
    If $F$ is cross-degree $n$, then $\Gamma^n F \cong F$.
\end{theorem}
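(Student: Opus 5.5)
We prove the first part; the second part is dual, using the meet-factorization hypothesis and the dual half of \autoref{lemma:cross_effect_B}.

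The plan is to combine \autoref{lemma:cross_effect_B} with the definition of $\Gamma_n F$ as an image. Suppose $F$ is cross-codegree $n$. Since $P$ is a join-factorization lattice and an abelian category is pointed and finitely cocomplete, \autoref{lemma:cross_effect_B} gives $\Cr_n F \cong 0$. In an abelian category the cofiber is the cokernel, so this says that $\coker(\varepsilon_n \colon T_n F \to F)$ vanishes objectwise. Hence $\varepsilon_n$ is an objectwise epimorphism, and therefore an epimorphism in $\Fun(P, \A)$, since kernels, cokernels and images in a functor category into an abelian category are computed pointwise.

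Next, in an abelian category the image of an epimorphism is its codomain. Concretely, $\Ima(f) = \ker(\coker f)$, and here $\coker \varepsilon_n = 0$, so $\Ima(\varepsilon_n) = \ker(F \to 0) = F$. Put differently, the canonical monomorphism $\Gamma_n F \to F$ is also an epimorphism: it is the second factor of the epi-mono factorization of the epimorphism $\varepsilon_n$, so it is epi. An arrow that is both mono and epi in an abelian category is an isomorphism, so $\Gamma_n F \to F$ is an isomorphism. This holds objectwise, and the maps are natural, so we obtain a natural isomorphism $\Gamma_n F \cong F$. This is realized by the canonical map, which is the form we will want for the later universal property.

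There is no real obstacle here. All the substantive work sits in \autoref{lemma:cross_effect_B}, which is exactly where the join-factorization hypothesis is used: it guarantees finite join-dimension, which drives the induction in that lemma. The only point needing care is that "image" is taken pointwise in $\Fun(P,\A)$. That is justified because $\Fun(P,\A)$ is abelian with pointwise (co)kernels.
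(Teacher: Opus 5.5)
Your proof is correct and is essentially the paper's argument: \autoref{lemma:cross_effect_B} gives $\coker(T_n F \to F) \cong \Cr_n F \cong 0$, so $T_n F \to F$ is an epimorphism and its image is all of $F$. Your additional remarks, that images are computed pointwise and that the isomorphism is realized by the canonical monomorphism $\Gamma_n F \to F$, are accurate refinements of the same route.
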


\begin{proof}
    We prove the first part. The second is dual.

    Suppose $F$ is cross-codegree $n$. Then, by \autoref{lemma:cross_effect_B}, $\coker(T_n F \to F)\cong  \Cr_n F \cong 0$. Hence, the map $T_n F \to F$ is an epimorphism. Thus, $\Gamma_n F = \Ima(T_n F \to F) \cong F$, as desired.
\end{proof}

\begin{example}
    We illustrate \autoref{thm:theoremA} and \autoref{thm:theoremB} in the case $n = 0$.
    Let $P$ be a join-factorization lattice, and let $\widehat 0$ denote its least element. 
    Let further $\A$ be an abelian category and $F \colon P \to \A$ a functor. Then, as $\widehat 0$ is the sole element in $P$ of join-dimension 0,
    \begin{equation*}
        \Gamma_0 F(x) \cong \Ima (T_0 F(x) \to F(x))
        \cong \Ima(F(\widehat 0) \to F(x)) \cong \Ima F(\widehat 0 \le x).
    \end{equation*}
    In other words, $\Gamma_0 F$ is the functor $\Ima F(\widehat 0 \le -)$.
    Note further that $F$ is cross-codegree 0 if and only if $\coker F(x \le y) \cong 0$ for all $x \le y$, or, in other words, if $F$ sends every morphism in $P$ to an epimorphism.
    \autoref{thm:theoremA} and \autoref{thm:theoremB} thus tells us that $F$ sends every morphism to an epimorphism if and only if 
    \begin{equation*}
        F \cong \Ima F(\widehat 0 \le -).
    \end{equation*}

    The approximation $\Gamma^0 F$ captures the dual concept: if $\widehat 1$ is a greatest element in $P$, then $\Gamma^0 F \cong \Ima F(- \le \widehat 1)$, and a cross-degree 0 functor is one that sends every morphism to a monomorphism. 
\end{example}

We introduce the following notation. Given a lattice $P$ and an abelian category $\A$,
let $\Fun_{\Cr_n = 0}(P, \A)$ denote the full subcategory of cross-codegree $n$ functors in $\Fun(P, \A)$, and let $i_n \colon \Fun_{\Cr_n = 0}(P, \A) \hookrightarrow \Fun(P, \A)$ denote the corresponding inclusion functor.
Likewise, let $\Fun_{\Cr^n = 0}(P, \A)$ denote the full subcategory of cross-degree $n$ functors, and let $i^n \colon \Fun_{\Cr^n = 0}(P, \A) \hookrightarrow \Fun(P, \A)$ denote the corresponding inclusion functor.

\begin{proposition}\label{prop:adjoints}
    For every join-factorization lattice $P$ and abelian category $\A$,
   $\Gamma_n$ is the right adjoint of the inclusion $i_n \colon \Fun_{\Cr_n=0}(P, \A) \to \Fun(P, \A)$
    
    Dually, for every meet-factorization lattice $P$ and abelian category $\A$,
    $\Gamma^n$ is the left adjoint of the inclusion $i^n \colon \Fun_{\Cr^n=0}(P, \A) \to \Fun(P, \A)$
\end{proposition}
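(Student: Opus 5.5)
We prove the first statement; the second is dual, with images, kernels and the monad $\Gamma^n$ in place of the comonad $\Gamma_n$.

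By \autoref{thm:theoremA}, $\Gamma_n$ factors through $\Fun_{\Cr_n=0}(P,\A)$. So it suffices to show that the counit $\iota_F \colon \Gamma_n F \to F$ (the canonical monomorphism) is universal. Concretely: for every cross-codegree $n$ functor $G$ and every natural transformation $\alpha \colon G \to F$, there should be a unique $\beta \colon G \to \Gamma_n F$ with $\iota_F \circ \beta = \alpha$. This universal property is equivalent to the adjunction $i_n \dashv \Gamma_n$, with counit $\iota$.

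Uniqueness is immediate, since $\iota_F$ is an objectwise monomorphism and hence a monomorphism in $\Fun(P,\A)$.

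For existence, we use naturality of $\varepsilon_n \colon T_n \to \id$, which gives $\alpha \circ \varepsilon_{n,G} = \varepsilon_{n,F} \circ T_n\alpha$. Since $G$ is cross-codegree $n$ and $P$ is a join-factorization lattice, \autoref{lemma:cross_effect_B} gives $\Cr_n G \cong 0$. Thus $\varepsilon_{n,G} \colon T_n G \to G$ is an epimorphism, i.e.\ $\alpha$ factors through the image of $\alpha\circ\varepsilon_{n,G}$. Write $\varepsilon_{n,F} = \iota_F \circ \pi_F$, where $\pi_F \colon T_nF \to \Gamma_nF$ is the epimorphism. Then
\begin{equation*}
\alpha \circ \varepsilon_{n,G} = \iota_F \circ \pi_F \circ T_n\alpha .
\end{equation*}
Objectwise, in the abelian category $\A$, we now have a commutative square. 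On one side, $\varepsilon_{n,G}$ is an epimorphism; on the other, $\iota_F$ is a monomorphism. Orthogonality of epimorphisms and monomorphisms (the image factorization in an abelian category) gives a unique diagonal $\beta_x \colon G(x) \to \Gamma_nF(x)$ with $\beta_x \circ \varepsilon_{n,G,x} = \pi_{F,x}\circ T_n\alpha_x$ and $\iota_{F,x}\circ\beta_x = \alpha_x$.

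Naturality of $\beta$ follows from uniqueness. For $x \le y$, both composites $G(x) \to \Gamma_nF(y)$ become equal after composing with the monomorphism $\iota_{F,y}$, since both then equal $\alpha_y \circ G(x\le y) = F(x\le y)\circ \alpha_x$. Alternatively, one can simply note that epi–mono orthogonality holds in the abelian functor category $\Fun(P,\A)$ itself.

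This gives the factorization $\alpha = \iota_F \circ \beta$. Hence $\iota$ is a universal arrow from $i_n$ to each $F$, so $\Gamma_n$ is right adjoint to $i_n$. This argument also proves \autoref{prop:universal_prop_cross} stated in the introduction. Functoriality of the resulting right adjoint agrees with $\Gamma_n$ on morphisms by the same uniqueness argument.

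The main point needing care is the appeal to \autoref{lemma:cross_effect_B}. This is exactly where the join-factorization hypothesis enters: it is needed to know that $T_nG \to G$ is epi, rather than merely that $G$ kills total cofibers. Everything else is formal.
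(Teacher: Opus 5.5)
Your proof is correct, but it takes a different route from the paper's. The paper argues formally in three steps:
\begin{itemize}
\item by \autoref{prop:idempotent_monad}, $\Gamma_n$ (dually $\Gamma^n$) is an idempotent comonad (monad);
\item by a general result cited from Borceux, an idempotent (co)monad is a (co)reflector onto its essential image;
\item \autoref{thm:theoremA} together with \autoref{thm:theoremB} identify that essential image with the cross-(co)degree $n$ functors.
\end{itemize}
You instead verify the couniversal property of $\iota_F \colon \Gamma_n F \to F$ directly. Uniqueness holds because $\iota_F$ is mono. Existence comes from applying epi--mono orthogonality in the abelian category $\Fun(P,\A)$ to the naturality square of $\varepsilon_n$, with \autoref{lemma:cross_effect_B} supplying that $\varepsilon_{n,G}$ is epi.

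Both proofs rest on the same essential inputs: \autoref{thm:theoremA}, and the fact that cross-codegree $n$ forces $T_nG \to G$ to be epi, which is the content of \autoref{thm:theoremB}. Your version does not need the idempotent comonad structure or the external citation. It also yields \autoref{prop:universal_prop_cross} immediately, rather than as a corollary of the adjunction.

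Your argument also isolates the role of the join-factorization hypothesis. Replace ``cross-codegree $n$'' by ``$\Cr_n G \cong 0$'' and combine with the proof of \autoref{thm:theoremA}, which shows $\Cr_n(\Gamma_n F) \cong 0$. Then your argument shows that $\Gamma_n$ coreflects onto the functors with vanishing $n$th cocross effect over any distributive lattice. The hypothesis is needed only to identify those functors with the cross-codegree $n$ ones.

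The paper's route, in exchange, is shorter given the machinery already developed. It also exhibits the adjunction as a purely formal consequence of idempotency, with the counit of the adjunction visibly equal to the comonad counit.
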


\begin{proof}
    We prove the second part. The first is dual.
    By \cite[4.2.3]{handbook_vol2}, an idempotent monad is a reflector onto its essential image (i.e., its essential image is a full reflective subcategory, and the monad is its left adjoint). By \autoref{prop:idempotent_monad}, $\Gamma^n$ admits the structure of an idempotent monad. It follows from \autoref{thm:theoremA} and \autoref{thm:theoremB} that the essential image of $\Gamma^n$ is $\Fun_{\Cr^n = 0}(P, \A)$.
\end{proof}

\begin{proposition}\label{prop:universal_prop_cross}
    Let $P$ be a join-factorization lattice and $\A$ an abelian category. For any functor $F \colon P \to \A$, every natural transformation $\alpha \colon G \to F$ from a cross-codegree $n$ functor $G$ factors uniquely through $\Gamma_nF \to F$.
    
    Dually, let $P$ be a meet-factorization lattice and $\A$ an abelian category.  For any functor $F \colon P \to \A$, every natural transformation $\alpha \colon F \to G$ to a cross-degree $n$ functor $G$ factors uniquely through $F \to \Gamma^n F$.
\end{proposition}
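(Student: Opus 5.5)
The plan is to prove the first part directly and to obtain the second by dualizing. One route is to read the statement off \autoref{prop:adjoints}, since the counit of the adjunction $i_n \dashv \Gamma_n$ is the map $\Gamma_n F \to F$. I will instead argue explicitly, using naturality of $\varepsilon_n$ together with \autoref{thm:theoremB}. Throughout, fix a natural transformation $\alpha \colon G \to F$ with $G$ cross-codegree $n$, and write $\iota_F \colon \Gamma_n F \to F$ for the canonical monomorphism.

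\emph{Uniqueness.} This is immediate: $\iota_F$ is objectwise a monomorphism, hence a monomorphism in $\Fun(P,\A)$. Two factorizations $\beta,\beta'$ with $\iota_F\beta = \alpha = \iota_F\beta'$ therefore coincide.

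\emph{Existence.} Naturality of $\varepsilon_n \colon T_n \to \id$ gives the commutative square
\begin{equation*}
\alpha \circ \varepsilon_{n,G} = \varepsilon_{n,F} \circ T_n\alpha \colon T_n G \to F.
\end{equation*}
Since $G$ is cross-codegree $n$ and $P$ is a join-factorization lattice, \autoref{lemma:cross_effect_B} gives $\Cr_n G \cong 0$. Hence $\varepsilon_{n,G} \colon T_n G \to G$ is an epimorphism; this is the same input that proves \autoref{thm:theoremB}. We therefore have
\begin{equation*}
\Ima(\alpha) = \Ima(\alpha\circ\varepsilon_{n,G}) = \Ima(\varepsilon_{n,F}\circ T_n\alpha) \subseteq \Ima(\varepsilon_{n,F}) = \Gamma_n F
\end{equation*}
as subobjects of $F$, computed objectwise in the abelian category $\A$. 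Concretely, $\alpha$ vanishes on composing with $F \to \coker(\iota_F)$, because $\alpha\circ\varepsilon_{n,G}$ factors through $\varepsilon_{n,F}$ and $\varepsilon_{n,G}$ is epi. Since $\iota_F = \ker(F \to \coker \iota_F)$, the map $\alpha$ factors through $\iota_F$.

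The one point needing care is that this factorization is natural and not merely objectwise. It is, because the kernel/image factorization in $\Fun(P,\A)$ is computed pointwise, so the factorization exists in the functor category; alternatively, uniqueness of each objectwise factorization forces naturality. I expect no real obstacle; the only substantive input is that $T_nG \to G$ is epi, which is exactly \autoref{lemma:cross_effect_B}.

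The dual statement follows in the same way. By \autoref{lemma:cross_effect_B} (dual), $\Cr^n G \cong 0$, so $\eta_{n,G} \colon G \to T^n G$ is a monomorphism. Then $\alpha$ kills $\ker(F\to T^nF)$, because $\eta_{n,G}\circ\alpha = T^n\alpha\circ\eta_{n,F}$. Hence $\alpha$ factors through the epimorphism $F \to \Gamma^n F$, and this factorization is unique since that map is epi.
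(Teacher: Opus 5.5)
Your argument is correct, but it takes a different route from the paper. The paper's proof is the one line you mention and set aside: it reads the statement off \autoref{prop:adjoints}. That proposition rests on three things: the idempotent (co)monad structure from \autoref{prop:idempotent_monad}; the general fact that an idempotent (co)monad is a (co)reflector onto its essential image; and \autoref{thm:theoremA} together with \autoref{thm:theoremB}, which identify that essential image with the cross-codegree $n$ functors.

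You bypass all of that machinery. You use only that $\varepsilon_{n,G}\colon T_nG \to G$ is epi when $G$ is cross-codegree $n$ (\autoref{lemma:cross_effect_B}), naturality of $\varepsilon_n$, and image calculus in the abelian category $\Fun(P,\A)$. Uniqueness is immediate because $\iota_F$ is mono.

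Your route is more elementary and shows that the factorization property does not need \autoref{thm:theoremA} at all. The paper's route gives the stronger structural fact that $\Gamma_n$ is right adjoint to $i_n$, i.e.\ that the universal factorization goes through an object of $\Fun_{\Cr_n=0}(P,\A)$. You recover this as well once you add \autoref{thm:theoremA}: then $(\Gamma_nF,\iota_F)$ is a universal arrow from $i_n$ to $F$ for every $F$, and that is exactly the data of a right adjoint.
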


\begin{proof}
    This follows directly from \autoref{prop:adjoints} and the universal morphism property of adjoint functors.
\end{proof}

\begin{proposition}
    Let $P$ be a join-factorization lattice and $\A$ an abelian category.
     For any functor $F \colon P \to \A$, the cotower of cross-codegree $n$ approximations of $F$ converges to $F$.

    Dually,
    let $P$ be a meet-factorization lattice and $\A$ an abelian category.
     For any functor $F \colon P \to \A$, the tower of cross-degree $n$ approximations of $F$ converges to $F$.
\end{proposition}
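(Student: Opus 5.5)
We interpret ``converges to $F$'' as: the canonical map $\colim_n \Gamma_n F \to F$ is an isomorphism, computed objectwise. Dually, for the tower, the map $F \to \lim_n \Gamma^n F$ is an isomorphism. We prove the first statement; the second follows by the dual argument, using meet-dimension and $T^n$ in place of join-dimension and $T_n$. The strategy is to show that for each fixed $x \in P$ the cotower $\Gamma_0 F(x) \to \Gamma_1 F(x) \to \cdots$ stabilizes at $F(x)$ after finitely many steps.

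Fix $x \in P$. Since $P$ is a join-factorization lattice, $\jdim(x) = m < \infty$. This holds because $\jdim(f(x))$ is finite in the distributive lattice $Q$ with the descending chain condition, by \autoref{thm:Birkhoff}, and $\jdim(x) = \jdim(f(x))$; this is the same observation used at the start of the proof of \autoref{lemma:cross_effect_B}. For every $n \ge m$ we have $x \in P_{\le n}$, so $x$ is the terminal object of the indexing poset $\{v \in P_{\le n} : v \le x\}$ in \eqref{eq:T_n_cat}. Hence $\varepsilon_n \colon T_n F(x) \to F(x)$ is an isomorphism, and therefore
\begin{equation*}
    \Gamma_n F(x) = \Ima(T_n F(x) \to F(x)) \xrightarrow{\ \cong\ } F(x)
\end{equation*}
for all $n \ge m$. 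The monomorphisms $\Gamma_n F(x) \to \Gamma_{n+1} F(x)$ commute with the maps to $F(x)$, and for $n \ge m$ both of these maps are isomorphisms. So the transition maps are isomorphisms from stage $m$ onward.

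A sequential diagram that is eventually constant has colimit equal to its eventual value. Hence the canonical map $\colim_n \Gamma_n F(x) \to F(x)$ is an isomorphism; in particular, this colimit exists even though $\A$ is not assumed cocomplete. Colimits in $\Fun(P, \A)$ are computed pointwise whenever the pointwise colimits exist. The objectwise isomorphisms are also natural in $x$, since they are induced by the structure maps $\Gamma_n F \to F$. Therefore $\colim_n \Gamma_n F \cong F$ in $\Fun(P, \A)$.

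The main obstacle is essentially definitional: we must settle what ``converges'' means, and handle the fact that $\A$ need not have infinite colimits. The pointwise eventual-stabilization argument avoids both issues, because the stabilizing index $m = \jdim(x)$ depends on $x$ but is always finite.
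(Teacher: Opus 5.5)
Your proof is correct, and it takes a different route from the paper's. The paper does not argue pointwise. It quotes Proposition 4.28 of the earlier poset cocalculus paper, which says $\colim_n T_n F \to F$ is an isomorphism, and factors this map as $\colim_n T_n F \to \colim_n \Gamma_n F \to F$. Since the composite is an isomorphism, the first map is a split monomorphism. Its cokernel is $\colim_n \coker(T_n F \to \Gamma_n F) \cong 0$, because cokernels commute with colimits and each $T_n F \to \Gamma_n F$ is an epimorphism. Hence both maps are isomorphisms.

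The paper's argument is formal: it transfers convergence from the $T_n$-cotower to the $\Gamma_n$-cotower using only that $T_n F \to \Gamma_n F$ is epi. It would therefore apply in any setting where convergence of the $T_n$-cotower is known. However, it tacitly assumes that the sequential colimits $\colim_n T_n F$ and $\colim_n \Gamma_n F$ exist in $\Fun(P, \A)$, which is not automatic for an arbitrary abelian category $\A$.

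Your argument essentially reproves the cited convergence result directly for $\Gamma_n$, via $\varepsilon_n$ being an isomorphism at $x$ once $n \ge \jdim(x)$. It is self-contained and establishes that the colimit exists. It also gives the sharper statement that the cotower is constant at $x$ from stage $\jdim(x)$ onward.
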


\begin{proof}
    We prove the first part. The second is dual.
    
    By \cite[Proposition 4.28]{hem2025posetfunctorcocalculusapplications}, the morphism
    \begin{equation*}
        \colim_n T_n F \to F
    \end{equation*}
    is an isomorphism. This morphism factors as
    \begin{equation}\label{eq:colimn_Tn_Pn}
        \colim_n T_n F \to \colim_n \Gamma_n F\to F.
    \end{equation}
    In particular,
    \begin{equation*}
        \colim_n T_n F \to \colim_n \Gamma_n F
    \end{equation*}
    is a split monomorphism.
    Furthermore,
    \begin{align*}
        &\coker \left( \colim_n T_n F \to \colim_n \Gamma_n F \right) \\
        \cong 
        &\colim_n \coker (T_n F \to \Gamma_n F ) \cong \colim_n 0 \cong 0.
    \end{align*}
    Thus, both morphisms in \eqref{eq:colimn_Tn_Pn} are isomorphisms.
\end{proof}

\subsection{Properties of cross-(co)degree $n$ approximations}

While $T_n$ commutes with colimits in $\Fun(P, \A)$, this is \emph{not} the case for $\Gamma_n$. Likewise, $\Gamma^n$ does not, in general, commute with limits.
However, we show here that $\Gamma_n$ enjoys many interesting properties that $T_n$ does not.

\begin{example}[$\Gamma_n$ does not commute with colimits.]
Let $\Fb$ be a field.
Let $F \colon \{0,1\}^2 \to \Vecs_{\Fb}$ be defined as
\begin{center}
    \begin{tikzcd}
    0 \arrow[r]           & \Fb           \\
    0 \arrow[u] \arrow[r] & 0 \arrow[u].
    \end{tikzcd}
\end{center}
Furthermore,
let $G \colon \{0,1\}^2 \to \Vecs_{\Fb}$ be defined as
\begin{center}
    \begin{tikzcd}
    \Fb \arrow[r, "1"]           & \Fb           \\
    \Fb \arrow[u, "1"] \arrow[r, "1"] & \Fb \arrow[u, "1"].
    \end{tikzcd}
\end{center}
Let further $\alpha \colon F \to G$ be the obvious inclusion map.

Then, $\Gamma_1 F \cong 0$ and $\Gamma_1 G \cong G$, so $\coker(\Gamma_1
\alpha) \cong G$.
Moreover,
$\coker(\alpha)$ is
\begin{center}
    \begin{tikzcd}
    \Fb \arrow[r]           & 0           \\
    \Fb \arrow[u, "1"] \arrow[r, "1"] & \Fb \arrow[u],
    \end{tikzcd}
\end{center}
which is cross-codegree $1$ and different from $G$. Thus, $\coker(\Gamma_1 \alpha) \ncong \Gamma_1 \coker(\alpha)$.
\end{example}

\begin{proposition}\label{prop:preserve_mono_epi}
    Let $P$ be a distributive lattice and $\A$ an abelian category. Let $F, G \colon P \to \A$ be two functors, and let $\alpha \colon F \to G$ be a natural transformation.
    \begin{enumerate}
        \item[(i)] If $\alpha \colon F \to G$ is a monomorphism, then both $\Gamma_n \alpha$ and $\Gamma^n \alpha$ are monomorphisms.
        \item[(ii)] If $\alpha \colon F \to G$ is an epimorphism, then both $\Gamma_n \alpha$ and $\Gamma^n \alpha$ are epimorphisms.
    \end{enumerate}
\end{proposition}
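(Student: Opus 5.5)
Both parts follow from the naturality square for $\varepsilon_n \colon T_n \to \id$ (resp.\ $\eta_n \colon \id \to T^n$). The point is that $\Gamma_n F$ sits as a subobject of $F$, while $\Gamma^n F$ is a quotient of $F$. We treat the four cases (mono/epi for $\Gamma_n$ and for $\Gamma^n$) separately, each by a short diagram chase using the epi--mono factorizations $T_n F \twoheadrightarrow \Gamma_n F \rightarrowtail F$ and $F \twoheadrightarrow \Gamma^n F \rightarrowtail T^n F$. All statements are objectwise, since monos and epis in $\Fun(P,\A)$ are detected objectwise.

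\emph{Monomorphisms for $\Gamma_n$.} The map $\Gamma_n\alpha$ is the restriction of $\alpha$ to images. Concretely, there is a commutative square
\begin{equation*}
\begin{array}{ccc}
\Gamma_n F & \rightarrowtail & F \\
\downarrow & & \downarrow \\
\Gamma_n G & \rightarrowtail & G,
\end{array}
\end{equation*}
with vertical maps $\Gamma_n\alpha$ and $\alpha$. If $\alpha$ is mono, then the composite $\Gamma_n F \to F \to G$ is mono. This composite equals $\Gamma_n F \to \Gamma_n G \to G$, so $\Gamma_n\alpha$ is mono.

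\emph{Epimorphisms for $\Gamma^n$.} This is the dual argument. The square $F \twoheadrightarrow \Gamma^n F$, $G \twoheadrightarrow \Gamma^n G$ commutes, so if $\alpha$ is epi, then $F \to G \to \Gamma^n G$ is epi. This composite equals $F \to \Gamma^n F \to \Gamma^n G$, so $\Gamma^n\alpha$ is epi.

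\emph{Epimorphisms for $\Gamma_n$.} If $\alpha$ is epi, then $T_n\alpha$ is epi. Indeed, $T_n$ is a left Kan extension of a restriction, so it preserves colimits and hence cokernels; thus $\coker T_n\alpha \cong T_n\coker\alpha \cong 0$. The square $T_n F \twoheadrightarrow \Gamma_n F$, $T_n G \twoheadrightarrow \Gamma_n G$ commutes, so the composite $T_n F \to T_n G \to \Gamma_n G$ is epi. This composite equals $T_n F \to \Gamma_n F \to \Gamma_n G$, so $\Gamma_n\alpha$ is epi.

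\emph{Monomorphisms for $\Gamma^n$.} Dually, $T^n = \Ran_i \circ i^*$ is a composite of right adjoints, so it preserves limits, hence kernels and monomorphisms. Thus $T^n\alpha$ is mono. The composite $\Gamma^n F \rightarrowtail T^n F \rightarrowtail T^n G$ is therefore mono, and it equals $\Gamma^n F \to \Gamma^n G \rightarrowtail T^n G$, so $\Gamma^n\alpha$ is mono.

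The only step needing care is that $T_n$ preserves epimorphisms and $T^n$ preserves monomorphisms. This is immediate from the colimit formula \eqref{eq:T_n_cat} (and its dual), or equivalently from the adjunctions, since finite colimits commute with cokernels. The rest is routine diagram chasing.
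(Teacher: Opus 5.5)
Your proof is correct and is essentially the paper's argument. For (i) the paper uses exactly your two squares: the canonical monomorphism $\Gamma_n F \rightarrowtail F$, and $T^n$ preserving kernels combined with $\Gamma^n F \rightarrowtail T^n F$. Your treatment of (ii) is the dual argument that the paper leaves as ``similar''. One small wording fix: the colimit in \eqref{eq:T_n_cat} need not be finite, but colimits commute with arbitrary colimits, so $T_n$ still preserves cokernels and your argument stands.
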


\begin{proof}
    We prove (i). The proof of (ii) is similar.

    Let $\alpha \colon F \to G$ be a monomorphism. We first prove that $\Gamma_n \alpha$ is a monomorphism.
    Consider the commutative diagram, where the vertical maps are the canonical monomorphisms.
    \begin{center}
    \begin{tikzcd}
        \Gamma_n F \arrow[r, "\Gamma_n \alpha"] \arrow[d, tail] & \Gamma_n G \arrow[d, tail] \\
        F \arrow[r, "\alpha", tail]                   & G                    .
    \end{tikzcd}
    \end{center}
    As the lower-left composition is a monomorphism, the top-right composition is a monomorphism. Thus, $\Gamma_n \alpha$ is a monomorphism.

    We now show that $\Gamma^n \alpha$ is a monomorphism. As $T^n$ preserves limits, and thus kernels, $\ker (T^n \alpha) = T^n \ker (\alpha) = T^n 0 = 0$. Thus, $T^n \alpha$ is a monomorphism. Now, consider the commutative diagram, where the vertical maps are the canonical monomorphisms.
    \begin{center}
    \begin{tikzcd}
        \Gamma^n F \arrow[r, "\Gamma^n \alpha"] \arrow[d, tail] & \Gamma^n G \arrow[d, tail] \\
        T^nF \arrow[r, "T^n\alpha", tail]                   & T^n G                    .
    \end{tikzcd}
    \end{center}
    Again, as the lower-left composition is a monomorphism, the top-right composition is a monomorphism. Thus, $\Gamma^n \alpha$ is a monomorphism.
\end{proof}

\begin{proposition}
    Let $P$ be a distributive lattice and $\A$ an abelian category.
    Let $F, G \colon P \to \A$. Then,
    \begin{itemize}
        \item $\Gamma_n(F \oplus G) \cong \Gamma_nF \oplus \Gamma_n G$, and
        \item $\Gamma^n(F \oplus G) \cong \Gamma^n F \oplus \Gamma^n G$.
    \end{itemize}
\end{proposition}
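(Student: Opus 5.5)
The plan is to reduce the statement to two facts: $T_n$ (respectively $T^n$) is additive, and images of direct sums of morphisms are direct sums of images in an abelian category. I treat the first bullet; the second is dual, using $T^n$ and $\eta_n$ in place of $T_n$ and $\varepsilon_n$.

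First, I will show that $T_n$ commutes with finite direct sums, compatibly with $\varepsilon_n$. Since $T_n = \Lan_i \circ i^*$, and both restriction $i^*$ and left Kan extension commute with colimits, $T_n$ preserves finite coproducts. In $\Fun(P,\A)$ these are biproducts, so the canonical map $T_nF \oplus T_nG \to T_n(F\oplus G)$ is an isomorphism. Naturality of $\varepsilon_n$, applied to the inclusions of $F$ and $G$ into $F \oplus G$, shows that under this isomorphism
\begin{equation*}
\varepsilon_{n,F\oplus G} \colon T_n(F \oplus G) \to F \oplus G
\end{equation*}
corresponds to the diagonal map $\varepsilon_{n,F} \oplus \varepsilon_{n,G}$. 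To check this, one compares the two maps after composing with each coproduct injection. Equivalently, one can argue objectwise from \eqref{eq:T_n_cat}: the colimit of a direct sum of diagrams is the direct sum of the colimits.

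Second, I will use that for morphisms $f\colon A\to B$ and $g\colon C\to D$ in an abelian category, $\Ima(f\oplus g)\cong \Ima f\oplus \Ima g$. This holds because $\ker(f\oplus g)=\ker f\oplus\ker g$ and the coimage is a cokernel of the kernel, hence additive, and the image is isomorphic to the coimage. Alternatively, the epi-mono factorizations $A \twoheadrightarrow \Ima f \rightarrowtail B$ and $C \twoheadrightarrow \Ima g \rightarrowtail D$ sum to a factorization of $f \oplus g$ as an epimorphism followed by a monomorphism. Uniqueness of epi-mono factorizations then gives the claim. Images in $\Fun(P,\A)$ are computed objectwise, so this applies there.

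Combining the two steps gives
\begin{equation*}
\Gamma_n(F\oplus G)=\Ima\bigl(T_n(F\oplus G)\to F\oplus G\bigr)\cong \Ima(\varepsilon_{n,F}\oplus\varepsilon_{n,G})\cong \Gamma_nF\oplus\Gamma_nG.
\end{equation*}
For the dual bullet, $T^n=\Ran_i\circ i^*$ preserves finite products, which are biproducts, and the same argument applies.

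The only step needing care is the identification of $\varepsilon_{n,F\oplus G}$ with $\varepsilon_{n,F}\oplus\varepsilon_{n,G}$. I expect this to be a routine naturality check rather than a real obstacle.
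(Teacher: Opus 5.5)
Your proposal is correct and follows the same route as the paper: $T_n$ (resp.\ $T^n$) is additive because it preserves finite coproducts (resp.\ products), and images in an abelian category commute with direct sums. Your explicit naturality check identifying $\varepsilon_{n,F\oplus G}$ with $\varepsilon_{n,F}\oplus\varepsilon_{n,G}$ (and dually $\eta_{n,F\oplus G}$ with $\eta_{n,F}\oplus\eta_{n,G}$, using the projections) spells out a step the paper leaves implicit.
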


\begin{proof}
    Firstly, as $T_n$ preserves coproducts and $T^n$ preserves products, both $T_n$ and $T^n$ preserves finite direct sums in an abelian category. Furthermore, the image of a direct sum of a map is the direct sum of the images of the maps, which concludes the proof.
\end{proof}

\begin{theorem}[Distributive law]\label{thm:dist_law}
    Let $P$ be a distributive lattice, $\A$ an abelian category and $F \colon P \to \A$ a functor. For any $m,n \ge 0$,
    \begin{equation*}
        \Gamma_m \Gamma^n F \cong \Gamma^n \Gamma_m F.
    \end{equation*}
    Furthermore, this isomorphism is natural in $F$.
\end{theorem}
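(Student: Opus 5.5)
Both sides are subobjects of a quotient of $F$, so the natural approach is to identify each with one explicit subquotient of $F$ and check that the two agree. Write $\varepsilon \colon T_m F \to F$ and $\eta \colon F \to T^n F$ for the canonical maps, and let $K = \ker \eta = \Cr^n F$, so that $\Gamma^n F \cong F/K$. Let $I = \Ima \varepsilon = \Gamma_m F \subseteq F$. The candidate common value is the image of $I$ in $F/K$, namely
\begin{equation*}
    (I + K)/K \cong I/(I \cap K).
\end{equation*}
The plan is to show that both $\Gamma_m \Gamma^n F$ and $\Gamma^n \Gamma_m F$ are isomorphic to this object, naturally in $F$.

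For $\Gamma_m \Gamma^n F = \Ima(T_m(F/K) \to F/K)$, use that $T_m$ commutes with colimits, in particular cokernels. This gives $T_m(F/K) \cong T_m F / \Ima(T_m K \to T_m F)$, and the map to $F/K$ is induced by $\varepsilon$. Since $T_m F \to F/K$ is surjective, the composite $T_m F \to T_m(F/K) \to F/K$ has the same image as $T_m(F/K) \to F/K$. That composite equals $T_m F \xrightarrow{\varepsilon} F \to F/K$ by naturality of $\varepsilon$, so its image is $(I+K)/K$.

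For $\Gamma^n \Gamma_m F = \Ima(I \to T^n I)$, argue dually. The map $\eta_I$ has kernel $\ker(I \to T^n I)$. Since $T^n$ preserves limits, and in particular monomorphisms, the map $T^n I \to T^n F$ is a monomorphism. By naturality of $\eta$, the composite $I \to T^n I \hookrightarrow T^n F$ equals $I \hookrightarrow F \xrightarrow{\eta} T^n F$. Hence $\ker \eta_I = \ker(I \to T^n F) = I \cap K$, so $\Gamma^n \Gamma_m F \cong I/(I \cap K)$.

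The two computations are exactly dual, and the second isomorphism theorem in $\A$ (applied objectwise, or in the abelian category $\Fun(P,\A)$) identifies $I/(I\cap K)$ with $(I+K)/K$. The main obstacle is naturality in $F$: the identifications above must be induced by canonical maps and not merely exist abstractly. I would make this concrete by exhibiting both sides as the image of the single natural composite
\begin{equation*}
    T_m F \xrightarrow{\varepsilon} F \xrightarrow{\eta} T^n F.
\end{equation*}
For $\Gamma_m\Gamma^n F$, it is the image of $T_m F \to F/K$, which embeds into $T^n F$ via $F/K \hookrightarrow T^n F$. For $\Gamma^n \Gamma_m F$, it is the image of $I \to T^n F$, and $T_m F \twoheadrightarrow I$ is epi. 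Both are thus the image of the same natural transformation, so they agree as subobjects of $T^n F$, and naturality follows from functoriality of images. Only distributivity of $P$ is needed, since this uses just that $T_m$ preserves colimits and $T^n$ preserves limits.
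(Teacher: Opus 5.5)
Your proposal is correct, but it takes a different route from the paper.

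\textbf{The paper's route.} The paper never computes images explicitly. It proves that the two natural maps
$\Gamma_m\Gamma^n\bar\varepsilon_F\colon \Gamma_m\Gamma^n\Gamma_m F\to\Gamma_m\Gamma^n F$ and $\bar\varepsilon_{\Gamma^n\Gamma_m F}\colon \Gamma_m\Gamma^n\Gamma_m F\to\Gamma^n\Gamma_m F$ are isomorphisms. Each is a monomorphism, because $\Gamma_m$ and $\Gamma^n$ preserve monos (\autoref{prop:preserve_mono_epi}). Each is shown to be epi by a diagram chase that uses three facts:
\begin{itemize}
\item $\Gamma_m\bar\varepsilon_F$ is an isomorphism, by idempotency (\autoref{prop:idempotent_monad});
\item $\bar\varepsilon_{\Gamma_m F}$ is an isomorphism, again by idempotency;
\item $\Gamma_m$ preserves epimorphisms.
\end{itemize}
So the paper's common value is $\Gamma_m\Gamma^n\Gamma_m F$.

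\textbf{Your route.} You exhibit both sides as the epi--mono factorization of the single natural map $\eta_F\circ\varepsilon_F\colon T_mF\to T^nF$. This uses only that $T_m$ preserves epimorphisms, that $T^n$ preserves monomorphisms, and that $\varepsilon$ and $\eta$ are natural.

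\textbf{What each buys.} Your argument does not need idempotency of $\Gamma_m$ or $\Gamma^n$. It gives an explicit, symmetric formula for the common value: $\Ima(T_mF\to F\to T^nF)\cong \Gamma_mF/(\Gamma_mF\cap \Cr^nF)$. Naturality comes for free from uniqueness of image factorizations. In the paper, by contrast, naturality has to be read off from the two comparison maps. The paper's argument, for its part, is essentially formal in the structure it reuses: idempotent (co)monads with monic counit and epic unit, preservation of monos and epis, and mono-plus-epi implies iso. It therefore does not depend on the specific image construction of $\Gamma_m$ and $\Gamma^n$.

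\textbf{One small slip.} You write ``Since $T_mF\to F/K$ is surjective.'' You mean that $T_mF\to T_m(F/K)$ is an epimorphism, which follows from the cokernel formula in your preceding sentence. The map $T_mF\to F/K$ itself is generally not epi; its image is exactly $(I+K)/K$.
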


\begin{proof}
    Let $\bar \varepsilon$ denote the canonical natural transformation $\Gamma_n \to \id_{\Fun(P,\A)}$, and $\bar \eta$ the canonical natural transformation $\id_{\Fun(P, \A)} \to \Gamma^m$. Recall that $\bar \varepsilon$ is an objectwise monomorphism, and $\bar \eta$ is an objectwise epimorphism.

    Considering the natural transformation $\Gamma_n \bar \eta$ together with the morphism $\bar \varepsilon_F$ gives the following commutative diagram.
    \begin{center}
        \begin{tikzcd}[row sep=4em, column sep=4em]
        \Gamma_n \Gamma_n F \arrow[r, "\Gamma_n \bar \varepsilon_F", "\cong"'] \arrow[d, "\Gamma_n \bar \eta_{\Gamma_n F}"', two heads] & \Gamma_n F \arrow[d, "\Gamma_n \bar\eta_F", two heads] \\
        \Gamma_n \Gamma^m \Gamma_n F \arrow[r, "\Gamma_n \Gamma^m \bar \varepsilon_F", tail]                              & \Gamma_n \Gamma^m F                                
        \end{tikzcd}
    \end{center}
    By \autoref{prop:idempotent_monad}, $\Gamma_n \bar \varepsilon_F$ is an isomorphism.
    Furthermore, by \autoref{prop:preserve_mono_epi}, $\Gamma_n \bar \eta_F$ is an epimorphism.
    Hence, $\Gamma_n \Gamma^m \bar \varepsilon_F$ is an epimorphism. Thus, as $\Gamma_n \Gamma^m \bar \varepsilon_F$ is also a monomorphism (again, by \autoref{prop:preserve_mono_epi}), $\Gamma_n \Gamma^m \bar \varepsilon_F$ is an isomorphism.

    Now, considering the natural transformation $\bar \varepsilon$ together with the morphism $\bar \eta_{\Gamma_n F}$ gives the following commutative diagram.
    \begin{center}
        \begin{tikzcd}[row sep=4em, column sep=4em]
        \Gamma_n \Gamma_n F \arrow[r, "\Gamma_n \bar \eta_{\Gamma_n F}", two heads] \arrow[d, "\bar \varepsilon_{\Gamma_n F}"', "\cong"] & \Gamma_n \Gamma^m \Gamma_n F \arrow[d, "\bar \varepsilon_{\Gamma^m \Gamma_n F}", tail] \\
        \Gamma_n F \arrow[r, "\bar \eta_{\Gamma_n F}", two heads]                              & \Gamma^m \Gamma_n F                                
        \end{tikzcd}
    \end{center}
    By, \autoref{prop:idempotent_monad}, $\bar \varepsilon_{\Gamma_n F}$ is an isomorphism. Thus, as $\bar \eta_{\Gamma_n F}$ is an epimorphism, $\bar \varepsilon_{\Gamma^m \Gamma_n F}$ is an epimorphism. Thus, as $\bar \varepsilon_{\Gamma^m \Gamma_n F}$ is also a monomorphism, and $\A$ is an abelian category, $\bar \varepsilon_{\Gamma^m \Gamma_n F}$ is an isomorphism.

    In conclusion,
    \begin{equation*}
        \Gamma_n \Gamma^m F \cong \Gamma_n \Gamma^m \Gamma_n F \cong \Gamma^m \Gamma_n F.
    \end{equation*}
    Naturality follows directly from the fact that each of these two isomorphisms is natural.
\end{proof}

\section{Application to multipersistence}

In this section, we will prove two theorems that reveal a connection between cross-degree and projective dimension of persistence modules.
We then give examples where we compute the cross-(co)degree of persistence modules, and in particular showcase situations where multifiltrations carry extra structure that imply sharper bounds for the cross-(co)degree of their homology.

First, we review some preliminaries on finite distributive lattices and on minimal projective resolutions of persistence modules.

\subsection{Combinatorics of finite distributive lattices}

For two elements $x,y$ in a poset $P$, we say that $y$ \emph{covers} $x$ if $y > x$ and that there exists no $z$ with $y > z > x$. We denote this by $y \succ x$. If $y$ covers $x$, we say that $x$ is a \emph{parent} of $y$ and that $y$ is a \emph{child} of $x$.

\begin{lemma}\label{lemma:join_of_parents}
    Let $P$ be a lattice, and let $u \in P$. If $a,b \prec u$ with $a \neq b$, then $a \vee b = u$.

    Dually, if $a,b \succ u$ with $a \neq b$, then $a \wedge b = u$.
\end{lemma}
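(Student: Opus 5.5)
The plan is to argue directly from the definitions of covering and join; no earlier result is needed. Let $a, b \prec u$ with $a \neq b$. Since $P$ is a lattice, $a \vee b$ exists. Because $u$ is an upper bound of both $a$ and $b$, and $a \vee b$ is the least upper bound, we have $a \vee b \le u$. We also have $a \le a \vee b$. So $a \vee b$ lies in the interval $\langle a, u \rangle$.

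The key step is to rule out $a \vee b = a$. If $a \vee b = a$, then $b \le a$. Since $b \neq a$, this means $b < a < u$, where $a < u$ holds because $u$ covers $a$. But then $a$ lies strictly between $b$ and $u$, contradicting the assumption that $u$ covers $b$. Hence $a < a \vee b \le u$. Since $u$ covers $a$, no element lies strictly between them, so $a \vee b = u$.

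The dual statement follows by the same argument with the order reversed. If $a, b \succ u$ and $a \neq b$, then $u \le a \wedge b \le a$. If $a \wedge b = a$, then $a \le b$, and so $u < a < b$, contradicting that $b$ covers $u$. Thus $u \le a \wedge b < a$, and since $a$ covers $u$, we get $a \wedge b = u$.

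There is no real obstacle. The only point requiring care is that the antisymmetry case $b < a$ must be excluded using the covering relation for $b$, not the one for $a$.
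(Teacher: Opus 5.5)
Your proof is correct. The only nontrivial point is ruling out $a \vee b = a$ (resp.\ $a \wedge b = a$), and you correctly use the covering relation for $b$ to do this. The covering relation for $a$ then forces $a \vee b = u$ (resp.\ $a \wedge b = u$).

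The paper does not argue this itself. It cites \cite[Lemma 7.1]{hem2025decomposingmultipersistencemodulesusing} for the join statement and invokes duality for the meet statement. Your direct argument from the definition of covering is therefore a self-contained replacement for that citation. It also makes explicit that only the existence of binary joins and meets is needed, with no distributivity assumption.
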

\begin{proof}
    The first part is proved in \cite[Lemma 7.1]{hem2025decomposingmultipersistencemodulesusing}. The second is dual.
\end{proof}

\begin{lemma}\label{lemma:meet_of_cover}
    Let $P$ be a distributive lattice, and $a,x,y \in P$.
    
    If $y \succ a$ and $a < x \ngeq y$, then $x \vee y \succ x$.
    
    Dually, if $a \succ y$ and $a > x \nleq y$, then $x \succ x \wedge y$.
\end{lemma}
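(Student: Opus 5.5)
We must show: if $y \succ a$ and $a < x \ngeq y$, then $x \vee y \succ x$. The plan is to use distributivity to compare any intermediate element with the interval $\langle a, y\rangle$, which has no interior points.

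First, $x \vee y > x$, since otherwise $y \le x$, contrary to the hypothesis. Now suppose $x \le z \le x \vee y$; we show $z = x$ or $z = x \vee y$. Consider $w = z \wedge y$.

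Since $a \le x \le z$ and $a \le y$, we have $a \le w \le y$. Because $y$ covers $a$, either $w = a$ or $w = y$.
\begin{itemize}
\item If $w = y$, then $y \le z$. Together with $x \le z$ this gives $x \vee y \le z$, hence $z = x \vee y$.
\item If $w = a$, apply distributivity~\eqref{eq:dist_lattice}:
\begin{equation*}
z = z \wedge (x \vee y) = (z \wedge x) \vee (z \wedge y) = x \vee a = x,
\end{equation*}
using $x \le z$ and $a \le x$.
\end{itemize}
Thus there is no element strictly between $x$ and $x \vee y$, so $x \vee y \succ x$.

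The dual statement, that $a \succ y$ and $a > x \nleq y$ imply $x \succ x \wedge y$, follows by passing to the opposite lattice, which is again distributive. Concretely:
\begin{itemize}
\item Since $x \nleq y$, we have $x \wedge y < x$.
\item Given $x \wedge y \le z \le x$, set $w = z \vee y$, which satisfies $y \le w \le a$.
\item If $w = y$, then $z \le y$, so $z \le x \wedge y$ and hence $z = x \wedge y$.
\item If $w = a$, then $z = z \vee (x \wedge y) = (z \vee x) \wedge (z \vee y) = x \wedge a = x$, using the dual distributive law, which holds in any distributive lattice.
\end{itemize}

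The only step that needs care is the case $w = a$, where distributivity is needed to recover $z$ from the meet with $x \vee y$. The rest is order bookkeeping.
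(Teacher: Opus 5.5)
Your proof is correct, and it takes a different route from the paper: the paper gives no direct argument, only a citation of Birkhoff's corollary for modular lattices together with the remark that distributive lattices are modular. You instead verify the statement by hand. You take an intermediate element $z \in \langle x, x \vee y\rangle$ and test it against the interval $\langle a, y\rangle$ through $w = z \wedge y$; the covering $y \succ a$ then forces $w \in \{a, y\}$.

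Each of your appeals to distributivity is in fact an instance of the modular law. In the first part you expand $z \wedge (x \vee y)$ with $x \le z$, which modularity alone already rewrites as $x \vee (z \wedge y)$. In the dual part you expand $z \vee (x \wedge y)$ with $z \le x$, again a modular identity. So your argument goes through verbatim in any modular lattice, recovering exactly the generality of the cited result. In effect you have reproved the special case of the transposition isomorphism $\langle x \wedge y, y\rangle \cong \langle x, x \vee y\rangle$ that is needed; note that $x \wedge y = a$ under your hypotheses.

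The paper's citation buys brevity and places the lemma in its standard lattice-theoretic context. Your version makes the lemma self-contained and shows that only $a \le x$, not the strict inequality $a < x$, is used.
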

\begin{proof}
    This is stated for modular lattices in \cite[Corollary V.2]{Birkhoff}, and all distributive lattices are modular.
\end{proof}

\begin{lemma}\label{lemma:num_of_parents}
    Let $P$ be a finite distributive lattice. If $x \in P$, then $x$ has exactly $\jdim(x)$ parents and exactly $\mdim(x)$ children
\end{lemma}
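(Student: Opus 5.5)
The plan is to use Birkhoff's representation: a finite distributive lattice $P$ is isomorphic to the lattice of down-sets $\mathcal{O}(J)$ of the poset $J$ of its (nonminimal) join-irreducible elements, with $x$ corresponding to the down-set $D_x = \{j \in J : j \le x\}$. Under this identification, covers are exactly the relations $D \prec D'$ with $|D' \setminus D| = 1$. We also use the fact from \autoref{thm:Birkhoff} that the minimal join-decomposition of $x$ into join-irreducibles is unique and consists of the maximal elements of $D_x$. Thus $\jdim(x) = |\max D_x|$, which is the size of the antichain of maximal elements of $D_x$; the convention $\jdim(\widehat 0) = 0$ matches, since $D_{\widehat 0} = \emptyset$.

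First I would count parents. A parent $y \prec x$ corresponds to a down-set $D_y \subseteq D_x$ with $D_x \setminus D_y = \{j\}$. Removing $j$ from $D_x$ gives a down-set exactly when $j$ is maximal in $D_x$. So the parents are in bijection with $\max D_x$, namely $y = \bigvee(D_x \setminus \{j\})$. To stay intrinsic and avoid full Birkhoff, I would phrase it directly. Let $x = p_0 \vee \dots \vee p_{m-1}$ be the minimal join-decomposition, and set $y_i = \bigvee_{j\neq i} p_j$, as in the proof of \autoref{lemma:cross_effect_B}. I would then show three things. (a) Each $y_i$ is covered by $x$: given $y_i < z \le x$, distributivity gives $p_i = p_i \wedge x = p_i \wedge (y_i \vee z)$, and one argues that $z \ge p_i$ using join-irreducibility of $p_i$ and the fact that $p_i \nleq y_i$. (b) The $y_i$ are distinct, since $p_i \nleq y_i$ but $p_i \le y_k$ for $k \ne i$. (c) Every parent $y$ equals some $y_i$: since $y < x$, some $p_i \nleq y$. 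Then $y \le y_i$ fails only if some $p_j$ with $j\ne i$ lies outside $y$, and in that case $y \vee p_i = x$ by the cover condition. One then shows $y \wedge$-reduces to $y_i$ via \autoref{lemma:meet_of_cover}, or more simply checks $y \le y_i$ using distributivity: $y = y \wedge x = \bigvee_j (y \wedge p_j)$, and $y \wedge p_i < p_i$. Combined with $y_i \vee y$ needing to be $x$ or $y$, the cover property forces $y = y_i$.

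The main obstacle is step (c) in the intrinsic phrasing. The cleanest route is to argue in $\mathcal{O}(J)$, where it is immediate, so I would just invoke Birkhoff's representation theorem for finite distributive lattices (the same reference \cite{Birkhoff}) rather than grind through the lattice identities. The statement about children is the dual. Apply the parent count to $P^{\op}$, which is again a finite distributive lattice whose join-irreducibles are the meet-irreducibles of $P$, and in which $\jdim$ becomes $\mdim$. Equivalently, in $\mathcal{O}(J)$ the children of $D$ correspond to the minimal elements of $J \setminus D$, and their number equals the size of the minimal meet-decomposition of $x$ into meet-irreducibles.
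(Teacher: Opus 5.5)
Your main argument, via Birkhoff's representation $P \cong \mathcal{O}(J)$, is correct. Covers in $\mathcal{O}(J)$ are exactly one-element extensions of down-sets, and removing one element from a down-set leaves a down-set iff that element is maximal. So the parents of $x$ are in bijection with $\max D_x$, and passing to $P^{\op}$ handles children.

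This is a genuinely different route from the paper, which gives no argument of its own. It obtains the parent count by combining \cite[Lemma 4.20]{hem2025posetfunctorcocalculusapplications} with \cite[Proposition 5.8]{realisationsposetstameness}, then dualizes. Your version is self-contained and makes the bijection explicit; the paper's is a one-line deferral to the literature.

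The one step you should prove rather than attribute to \autoref{thm:Birkhoff} is $\jdim(x) = |\max D_x|$, since that proposition only asserts uniqueness. It follows because join-irreducibles in a distributive lattice are join-prime. Suppose $x = q_0 \vee \dots \vee q_{k-1}$ with each $q_i$ join-irreducible, and let $j \in \max D_x$. Then $j = \bigvee_i (j \wedge q_i)$ forces $j \le q_i \le x$ for some $i$, hence $j = q_i$ by maximality. So $k \ge |\max D_x|$, and equality is attained by $\max D_x$ itself.

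However, discard the ``intrinsic'' paragraph. It is not merely laborious at step (c): steps (a) and (c) are false. The elements $y_i = \bigvee_{j \ne i} p_j$ are in general not covered by $x$. In $[2]^2 = \{0,1,2\}^2$, take $x = (2,2) = (2,0) \vee (0,2)$, so $p_0 = (2,0)$ and $p_1 = (0,2)$. Then $y_0 = (0,2)$ and $y_1 = (2,0)$, but the parents of $x$ are $(1,2)$ and $(2,1)$.

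Your $\mathcal{O}(J)$ formula $y = \bigvee (D_x \setminus \{p_i\})$ was the right one. Intrinsically it equals $p_i^{*} \vee \bigvee_{j \ne i} p_j$, where $p_i^{*}$ is the unique lower cover of $p_i$. This is strictly larger than $\bigvee_{j \ne i} p_j$ whenever $p_i^{*} \nleq \bigvee_{j \ne i} p_j$.

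Your argument for (a) also assumes its conclusion. Since $y_i < z$, one has $p_i \wedge (y_i \vee z) = p_i \wedge z$, and this equals $p_i$ only if $z \ge p_i$, which is what was to be shown. The $y_i$ from \autoref{lemma:cross_effect_B} form a pairwise cover of $x$, which is all that proof needs, but they are not the parents.
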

\begin{proof}
    The first part follows from \cite[Lemma 4.20]{hem2025posetfunctorcocalculusapplications} and \cite[Proposition 5.8]{realisationsposetstameness}.
    The proof of the second part is dual.
\end{proof}

In the following definition, we adopt the convention that $[-1] = \emptyset$, so that a 0-cube in a poset $P$ is the same as an element in $P$.
\begin{definition}
    Let $P$ be a finite lattice, and $a \in P$. Let $n = \jdim(a)$, and let $x_0, \dots, x_{n-1}$ be the parents of $a$. We define the \emph{parent-cube} of $a$ as the $n$-cube $\PC_a \colon \Po([n-1]) \to \Vecs_{\Fb}$ defined by
    \begin{equation*}
        \PC_a(S) = \begin{cases}
            a, & S = [n-1], \\
            \bigwedge_{i \notin S} x_i, & \textrm{otherwise.}
        \end{cases}
    \end{equation*}
    Dually, let $n = \mdim(a)$, and let $y_0, \dots, y_{n-1}$ be the children of $a$.
    We define the \emph{child-cube} of $a$ as the $n$-cube $\CC_a \colon \Po([n-1]) \to \Vecs_{\Fb}$ defined by
    \begin{equation*}
        \CC_a(S) = \begin{cases}
            a, & S = \emptyset, \\
            \bigvee_{i \in S} y_i, & \textrm{otherwise.}
        \end{cases}
    \end{equation*}
\end{definition}

Note that in the case where $a \in P$ has 0 parents, the parent-cube of $a$ is the 0-cube in $a$.

\begin{lemma}\label{lemma:pc_bicart}
    In a distributive lattice, all parent-cubes and child-cubes are strongly bicartesian.
\end{lemma}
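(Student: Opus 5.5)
The plan is to reduce the statement to \autoref{lem:bicartesian_from_codecomp} by checking that the parents of $a$ form a pairwise cover of $a$, and that the parent-cube is then exactly the cube $\X_{x_0, \dots, x_{n-1}}$ of \autoref{def:poset_cube}. The case of child-cubes will follow by duality. (The codomain of $\PC_a$ should be read as $P$, not $\Vecs_{\Fb}$.)

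We handle the degenerate cases first. If $n = \jdim(a) = 0$, the parent-cube is a $0$-cube, and strong bicartesianness holds vacuously since there are no faces of dimension $\ge 2$. If $n = 1$, it is a $1$-cube, which again has no faces of dimension $\ge 2$, so it is vacuously strongly bicartesian.

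Now let $n \ge 2$, and let $x_0, \dots, x_{n-1}$ be the parents of $a$. These are distinct elements covered by $a$. By \autoref{lemma:join_of_parents}, $x_i \vee x_j = a$ for all $i \neq j$. Also $x_i \le a$, so the parents form a pairwise cover of $a$ of size $n$. Comparing definitions, $\PC_a(S) = a$ for $S = [n-1]$ and $\PC_a(S) = \bigwedge_{i \notin S} x_i$ otherwise, which is literally $\X_{x_0, \dots, x_{n-1}}$. \autoref{lem:bicartesian_from_codecomp}, applied with $k = n-1$, then gives that $\PC_a$ is strongly bicartesian.

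For child-cubes, dualize. The children $y_0, \dots, y_{n-1}$ of $a$ satisfy $y_i \wedge y_j = a$ for $i \neq j$, by the dual part of \autoref{lemma:join_of_parents}. Hence they form a pairwise cover of $a$ in the opposite lattice $P^{\op}$, which is again distributive. In $P^{\op}$, the cube $\CC_a$, reindexed via complementation $S \mapsto [n-1]\setminus S$, is exactly the cube $\X_{y_0,\dots,y_{n-1}}$ formed in $P^{\op}$. Strong bicartesianness is self-dual, since by \autoref{rmk:cubes_bicart_meet} it means preserving joins and meets, and both reindexing and passing to $P^{\op}$ swap these two conditions. So the result follows from the first part.

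There is no real obstacle here. The only care needed is in the bookkeeping: the vacuous low-dimensional cases, the fact that the parent-cube is indexed to match \autoref{def:poset_cube} exactly, and the correct complement reindexing in the dual case. Finiteness of $P$ is implicit in the definition of parent-cubes, and it ensures that $a$ has exactly $\jdim(a)$ parents by \autoref{lemma:num_of_parents}.
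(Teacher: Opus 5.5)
Your proposal is correct and follows the paper's proof: treat $n \le 1$ as trivial, use \autoref{lemma:join_of_parents} to see that the parents form a pairwise cover of $a$, and invoke \autoref{lem:bicartesian_from_codecomp}, with child-cubes handled by duality. Your explicit complement reindexing in $P^{\op}$ for the child-cube case correctly spells out what the paper simply calls ``dual''.
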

\begin{proof}
    We prove the statement for parent-cubes. The statement for child-cubes is dual.

    Let $P$ be a finite distributive lattice, and $a \in P$. Let $n = \jdim(a)$ (which equals the number of parents of $a$, by \autoref{lemma:num_of_parents}). If $n=0$ or $n=1$, the statement follows trivially. 
    
    Suppose that $n \ge 2$. Let $x_0, \dots, x_{n-1}$ be the parents of $a$. By \autoref{lemma:join_of_parents}, $x_i \vee x_j = a$ for all $i \neq j$.
    It now follows from \cite[Lemma 3.6]{hem2025posetfunctorcocalculusapplications} that $\PC_a$ is strongly bicartesian.
\end{proof}

%\subsection{The dimension of a lattice}

Given posets $P$ and $Q$, we say that $P$ \emph{embeds} into $Q$ if there exists an order-preserving, injective function $i \colon P \hookrightarrow Q$.

\begin{definition}
    Let $P$ be a poset. The \emph{dimension}\footnote{also called \emph{order-dimension} or \emph{Dushnik–Miller dimension}} of $P$ is defined as the minimal number of total orders such that P admits an embedding into their product.
\end{definition}

\begin{proposition}\label{prop:order_dim_jdim}
    If $P$ is a finite distributive lattice, then the dimension of $P$ equals
    \begin{equation*}
        \max\{\jdim(x) : x \in P\}.
    \end{equation*}
\end{proposition}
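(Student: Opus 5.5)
The plan is to pass through Birkhoff's representation theorem and reduce the statement to a Dilworth-type result about the poset of join-irreducibles. One caveat on the definition first. Read literally, with ``embeds'' meaning an injective order-preserving map, every finite poset embeds into a single total order via a linear extension. The intended notion, the Dushnik--Miller dimension, uses \emph{order embeddings}, i.e.\ $x \le y \iff i(x) \le i(y)$. I will prove the statement for that notion.

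Let $J$ be the poset of join-irreducible elements of $P$, excluding the least element. By Birkhoff's theorem, $P$ is isomorphic to the lattice of down-sets of $J$, with $x \mapsto \{p \in J : p \le x\}$. Let $w$ be the width of $J$, i.e.\ the maximal size of an antichain.

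The first step is to show that $\max\{\jdim(x) : x \in P\} = w$. By \autoref{thm:Birkhoff}, the minimal join-decomposition of $x$ into join-irreducibles is unique. It consists of the maximal elements of the down-set of $x$, which form an antichain, so $\jdim(x) \le w$. Conversely, if $A \subseteq J$ is an antichain, then $x = \bigvee A$ has down-set $\downarrow A$, whose set of maximal elements is exactly $A$. Hence $\jdim(x) = |A|$, and the maximum is attained.

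Next I show that the dimension is at most $w$. By Dilworth's theorem, $J$ is a union of $w$ chains $C_1, \dots, C_w$. Send a down-set $D$ to the tuple $(D \cap C_1, \dots, D \cap C_w)$. Each $D \cap C_i$ is a down-set of the chain $C_i$, so it lies in a total order with $|C_i|+1$ elements. This map is order-preserving. It also reflects order: since $D = \bigcup_i (D \cap C_i)$, having $D \cap C_i \subseteq D' \cap C_i$ for all $i$ gives $D \subseteq D'$. So $P$ order-embeds into a product of $w$ chains.

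The step needing actual care is the lower bound, that the dimension is at least $w$. Fix an antichain $A = \{a_0, \dots, a_{w-1}\}$ in $J$, and let $D_0 = \downarrow A \setminus A$. Consider the map $\Po(A) \to P$ given by $S \mapsto D_0 \cup S$. This is a down-set, because the elements of $A$ are pairwise incomparable and everything strictly below them lies in $D_0$. The map is clearly an order embedding of the Boolean lattice $\Po([w-1])$ into $P$. It essentially recovers the parent-cube of $\bigvee A$ from \autoref{lemma:pc_bicart}. Since dimension is monotone under order embeddings, it suffices to show that $\Po([w-1])$ does not order-embed into a product of $k < w$ chains.

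Take such an embedding $\Po([w-1]) \to L_1 \times \dots \times L_k$, and use the standard example. Set $s_i = \{i\}$ and $t_i = [w-1] \setminus \{i\}$. Since $s_i \not\le t_i$, there is a coordinate $j(i)$ with $s_i >_{j(i)} t_i$. Suppose $j(i) = j(i') = j$ for some $i \ne i'$. Because $s_i \le t_{i'}$ and $s_{i'} \le t_i$ in $\Po([w-1])$, we get
\begin{equation*}
s_i \le_j t_{i'} <_j s_{i'} \le_j t_i <_j s_i,
\end{equation*}
which is a contradiction. So $i \mapsto j(i)$ is injective, and therefore $k \ge w$. Combining this with the first step completes the proof.
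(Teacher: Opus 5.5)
Your proof is correct, and it takes a different route from the paper. The paper's proof is a two-line reduction to the literature. It cites Dilworth's theorem that a finite distributive lattice has dimension equal to the largest number of parents (lower covers) of a single element. It then converts parents to join-dimension using \autoref{lemma:num_of_parents}.

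You instead pass through Birkhoff's representation of $P$ as the down-sets of $J$, and identify $\max \jdim$ directly with the width $w$ of $J$. This bypasses the parent-counting lemma. You then reprove the content of Dilworth's lattice theorem yourself:
\begin{itemize}
\item the upper bound comes from a chain decomposition of $J$;
\item the lower bound comes from embedding $\Po([w-1])$ into $P$ and running the standard-example argument.
\end{itemize}
The steps check out, including that $D_0 \cup S$ is a down-set and the cyclic contradiction $s_i \le_j t_{i'} <_j s_{i'} \le_j t_i <_j s_i$. You do assert one fact without proof: that the minimal join-decomposition of $x$ is the set of maximal elements of $\{p \in J : p \le x\}$. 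It is standard, and follows because join-irreducibles are join-prime in a distributive lattice.

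Your caveat about the definition is also correct. If ``embeds'' means only an injective order-preserving map, a linear extension shows every finite poset has dimension at most $1$. The order-reflecting notion is the one Dilworth's theorem uses, and hence the one the paper's proof implicitly relies on.

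What your route buys is self-containedness and a concrete witness for the lower bound. Your embedded Boolean lattice $S \mapsto D_0 \cup S$ is exactly the parent-cube $\PC_{\bigvee A}$ of an element of maximal join-dimension. That is the same cube that carries the argument in \autoref{thm:pdim_thm1}. The paper's route buys brevity, at the cost of deferring all the combinatorics to the cited references.
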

\begin{proof}
    It is stated in \cite[Theorem 1.2]{dilworth_theorem} that the dimension of a finite distributive lattice $P$ equals the maximum $k$ such that there exists an element $x \in P$ with $k$ parents.
    The statement now follows from \autoref{lemma:num_of_parents}.
\end{proof}

\begin{corollary}
    Let $P$ be a finite distributive lattice. Then,
    \begin{equation*}
        \max\{\jdim(x) : x \in P\} = \max\{\mdim(x) : x \in P\}.
    \end{equation*}
\end{corollary}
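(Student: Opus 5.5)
The corollary should follow from \autoref{prop:order_dim_jdim} together with its dual. The proposition states that the order-dimension of a finite distributive lattice $P$ equals $\max\{\jdim(x) : x \in P\}$. The plan is to derive the companion formula, that the dimension also equals $\max\{\mdim(x) : x \in P\}$, by passing to the opposite lattice $P^{\op}$, and then equate the two expressions.

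The steps, in order:
\begin{enumerate}
\item Observe that $P^{\op}$ is again a finite distributive lattice. Joins and meets are exchanged, and the distributive law \eqref{eq:dist_lattice} is self-dual, since in any lattice it is equivalent to its dual form.
\item Observe that the meet-dimension in $P$ is the join-dimension in $P^{\op}$, so $\mdim_P(x) = \jdim_{P^{\op}}(x)$. This is immediate because meet-irreducibles of $P$ are exactly the join-irreducibles of $P^{\op}$, and the dual definition of $\mdim$ was set up that way.
\item Check that order-dimension is invariant under passing to the opposite poset. If $P \hookrightarrow L_1 \times \dots \times L_k$ is an order-preserving injection into a product of total orders, then the same map gives an order-preserving injection $P^{\op} \hookrightarrow L_1^{\op} \times \dots \times L_k^{\op}$, and each $L_i^{\op}$ is a total order. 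Hence $\dim P^{\op} \le \dim P$, and by symmetry the two are equal.
\item Apply \autoref{prop:order_dim_jdim} to both $P$ and $P^{\op}$:
\begin{equation*}
\max_x \jdim_P(x) = \dim P = \dim P^{\op} = \max_x \jdim_{P^{\op}}(x) = \max_x \mdim_P(x).
\end{equation*}
\end{enumerate}

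The only step that needs care is step 3, because of how the paper defines embedding: an order-preserving injective function, not necessarily an order-embedding. Reversing the order on both sides still gives an order-preserving injective map, so the argument is unaffected.

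A more direct alternative avoids order-dimension entirely. By \autoref{lemma:num_of_parents}, $\max_x \jdim(x)$ is the maximum number of elements covered by a single element, and $\max_x \mdim(x)$ is the maximum number of covers of a single element. The cited Dilworth result \cite[Theorem 1.2]{dilworth_theorem} also has a dual form stated in terms of children, and combining the two forms gives the equality directly. I would present the opposite-lattice argument above as the main proof.
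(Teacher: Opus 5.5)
Your proof is correct and follows essentially the same route as the paper: rewrite $\max \mdim$ over $P$ as $\max \jdim$ over $P^{\op}$, apply \autoref{prop:order_dim_jdim} to both $P$ and $P^{\op}$, and use that an embedding $P \hookrightarrow T_1 \times \dots \times T_k$ gives an embedding $P^{\op} \hookrightarrow T_1^{\op} \times \dots \times T_k^{\op}$. You also check explicitly that $P^{\op}$ is again a finite distributive lattice and that $\mdim_P = \jdim_{P^{\op}}$; the paper's proof leaves both points implicit.
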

\begin{proof}
    The equation can be rewritten as
    \begin{equation*}
        \max\{\jdim(x) : x \in P\} = \max\{\jdim(x) : x \in P^{\op}\}.
    \end{equation*}
    By \autoref{prop:order_dim_jdim}, the left-hand side is the dimension of $P$ and the right-hand size is the dimension of $P^{\op}$. These numbers are the same, as if $P$ embeds into $T_1 \times \dots \times T_n$, then $P^{\op}$ embeds into $T_1^{\op} \times \dots \times T_n^{\op}$.
\end{proof}

\begin{example}
We give the dimension of some finite distributive lattices.
\begin{itemize}
    \item The poset $[m_1] \times \dots \times [m_n]$, i.e., a product of $n$ finite total orders, has dimension $n$ (assuming $m_i > 0$ for all $i$). 
    \item For a finite set $V$, the poset $\Po(V)$ has dimension $|V|$ (as it's isomorphic to $\{0,1\}^{|V|}$).
\end{itemize}
\end{example}

\subsection{Persistence modules and minimal resolutions}

A \emph{persistence module} is a functor of the form $F \colon P \to \Vecs_{\Fb}$, where $P$ is a poset and $\Vecs_{\Fb}$ is the category of vector spaces over some field $\Fb$.
We will use the term \emph{$n$-parameter multipersistence module} when referring to a persistence module whose source poset is a product of $n$ total orders. It is common to refer to 2-parameter multipersistence modules as \emph{bipersistence modules}.

For any poset $P$, the category $\Fun(P, \Vecs_{\Fb})$ is an abelian category with enough projectives, and every projective in this category is free \cite[Proposition 5]{projectiveDiagsFree}, i.e., of the form
\begin{equation*}
    \bigoplus_{x \in P} V_x [x, -),
\end{equation*}
where each $V_x$ is a vector space, and $V_x [x, -)$ denotes the functor $\Lan_{\{x\} \hookrightarrow P} V_x$.

A \emph{minimal resolution} of a functor $F \colon P \to \Vecs_{\Fb}$ is a free resolution
\begin{equation*}
\dots \to Q_1 \to Q_0 \xrightarrow{p} F,
\end{equation*}
such that any chain map $\phi \colon Q_{\bullet} \to Q_{\bullet}$ satisfying $p \circ \phi = p$ is an isomorphism. In \cite[Section 10]{realisationsposetstameness}, it is proved that minimal resolutions exist in the setting where the poset $P$ is finite, and a formula for computing them using Koszul complexes is given. We present this formula here, in the special case where the source poset is a finite distributive lattice.

Given a persistence module $F \colon P \to \Vecs_{\Fb}$, we let $\{(\beta^i F)_x : i \in \Nn_{\ge 0}, x \in P\}$ denote the vector spaces so that the $i$th term in the minimal resolution of $F$ is
\begin{equation*}
    Q_i = \bigoplus_{a \in P} (\beta^iF)_a [a, -).
\end{equation*}
We call $\{(\beta^iF)_a : a \in P\}$ the \emph{$i$th Betti diagram} of $F$.

We recall here the usual definition of the Koszul complex, as formulated in \cite{realisationsposetstameness}.
\begin{definition}
    Given a $k$-cube $\X \colon \Po([k-1]) \to \Vecs_{\Fb}$, the \emph{Koszul complex} of $\X$, denoted $K_{\X}$, is the chain complex in $\Vecs_{\Fb}$ given by
    \begin{equation*}
        (K_{\X})_i = \bigoplus_{S \subseteq [k-1], |S| = k-i} \X (S),
    \end{equation*}
    with differential $\partial_{i+1} \colon (K_{\X})_{i+1} \to (K_{\X})_{i}$ defined componentwise by
    \begin{equation*}
        \partial_{i+1}|_{\X(S)}
        = \sum_{j=0}^i (-1)^j \X (S \subseteq S \cup \{t_j\}),
    \end{equation*}
    where $t_0 < \dots < t_i$ are the elements in $[k-1] \setminus S$.
\end{definition}

\begin{theorem}\label{thm:betti_diag}
    Let $P$ be a finite distributive lattice, and $F \colon P \to \Vecs_{\Fb}$ a finitely generated persistence module.
    For all $a \in P$,
    \begin{equation*}
        (\beta^i F)_a \cong H_i (K_{F \circ \PC_a}).
    \end{equation*}
\end{theorem}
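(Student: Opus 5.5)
The plan is to compute $\operatorname{Tor}$ of $F$ against the simple module at $a$ in two ways. I would use the functor tensor product $-\otimes_P-$ pairing functors $P \to \Vecs_{\Fb}$ with functors $P^{\op} \to \Vecs_{\Fb}$, which is characterized by $F \otimes_P \Fb(-,b] \cong F(b)$, where $\Fb(-,b]$ is the representable contravariant functor at $b$. Let $S_a \colon P^{\op} \to \Vecs_{\Fb}$ be the simple functor that is $\Fb$ at $a$ and $0$ elsewhere. Tor over the incidence algebra of a finite poset is balanced, so $\operatorname{Tor}_i^P(F, S_a)$ can be computed either from a projective resolution of $F$ or from one of $S_a$.

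First I resolve $F$. Take the minimal resolution $Q_\bullet \to F$ with $Q_i = \bigoplus_b (\beta^iF)_b[b,-)$. Since $[b,-)\otimes_P S_a \cong S_a(b)$, the complex $Q_\bullet \otimes_P S_a$ has $(\beta^iF)_a$ in degree $i$. Its differentials come only from the components $[a,-)\to[a,-)$ of the maps in $Q_\bullet$. Minimality means these components lie in the radical, and the only endomorphisms of $[a,-)$ are scalars, so these components vanish. Hence $\operatorname{Tor}_i(F,S_a)\cong(\beta^iF)_a$. I would justify the vanishing by citing the characterization of minimal resolutions in \cite[Section 10]{realisationsposetstameness}: $Q_\bullet\otimes_P S_a$ has zero differential.

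Next I resolve $S_a$ by the Koszul complex of the cube of representables $S \mapsto \Fb(-,\PC_a(S)]$, where $n=\jdim(a)$ and $\PC_a$ is the parent-cube. Evaluating this complex at $y\in P$ gives the Koszul complex of the $\{0,\Fb\}$-valued cube $S\mapsto \Fb$ if $y\le \PC_a(S)$ and $0$ otherwise. By \autoref{lemma:pc_bicart} and \autoref{rmk:cubes_bicart_meet}, $\PC_a$ preserves meets. Therefore the set $U_y=\{S : y\le \PC_a(S)\}$ is closed under intersection and upward closed, so it is either empty or an interval $\langle S_y,[n-1]\rangle$. The Koszul complex of the constant cube $\Fb$ on a nontrivial interval is acyclic, being a tensor power of the acyclic complex $\Fb\xrightarrow{1}\Fb$. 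So homology survives only when $U_y=\{[n-1]\}$, where it is $\Fb$ in degree $0$.

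The condition $U_y=\{[n-1]\}$ means $y\le a$ and $y\nleq x_i$ for all parents $x_i$ of $a$. In a finite lattice every $y<a$ lies below some element covered by $a$, that is, below some parent, so the condition forces $y=a$. Hence the Koszul complex is a projective resolution of $S_a$. Tensoring it with $F$ and using $F\otimes_P\Fb(-,b]\cong F(b)$, which is natural in $b$, yields exactly $K_{F\circ\PC_a}$. Combining the two computations gives $(\beta^iF)_a\cong\operatorname{Tor}_i(F,S_a)\cong H_i(K_{F\circ\PC_a})$.

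I expect the main obstacle to be the bookkeeping rather than the ideas. Two points need care: the degree and sign conventions must match, with $\Fb(-,a]$ sitting in degree $0$ as $S=[n-1]$ and the homological degree $i$ corresponding to $|S|=n-i$; and the vanishing of differentials from minimality must be justified cleanly. If the cited characterization of minimality is not directly available, I would argue instead that a nonzero scalar component $[a,-)\to[a,-)$ splits off a contractible summand, which contradicts minimality.
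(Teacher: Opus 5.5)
Your argument is correct, but it is not the route the paper takes. The paper gives no proof of its own; it simply specializes \cite[Theorem 10.17]{realisationsposetstameness} to distributive lattices.

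You instead compute $\mathrm{Tor}^P_i(F,S_a)$ in two ways. The real content of your proof is the observation that the Koszul complex of the cube of representables $S\mapsto \Fb(-,\PC_a(S)]$ is a projective resolution of $S_a$ in $\Fun(P^{\op},\Vecs_{\Fb})$, typically a non-minimal one. This makes it clear why the parent-cube appears and exactly what is used:
\begin{enumerate}
\item meet-preservation of $\PC_a$, which is in fact immediate from the definition and does not need \autoref{lemma:pc_bicart};
\item the fact that $x_0,\dots,x_{n-1}$ are \emph{all} the parents of $a$. This is where distributivity enters, via \autoref{lemma:num_of_parents}. You use it silently in the step from $y<a$ to $y\le x_i$ for some $i$, and you should say so explicitly.
\end{enumerate}

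The price of your route is that you must connect the paper's definition of minimality (every chain endomorphism over $p$ is invertible) to the vanishing of the $[a,-)\to[a,-)$ blocks. Your Gaussian-elimination fallback does this. Add one check: when the split-off contractible summand sits in degrees $1$ and $0$, the augmentation vanishes on its degree-$0$ copy of $[a,-)$ because $p\circ d_1=0$. Projecting onto the complement is then a non-invertible chain endomorphism over $p$, which gives the contradiction.

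The citation buys brevity and inherits the greater generality of the source theorem. Your version is self-contained modulo balanced $\mathrm{Tor}$ and standard facts about free functors in $\Fun(P,\Vecs_{\Fb})$.
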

\begin{proof}
    This is \cite[Theorem 10.17]{realisationsposetstameness}, in the special case where the source poset is a distributive lattice.
\end{proof}

The following is Lemma 8.15 in \cite{hem2025decomposingmultipersistencemodulesusing}.
\begin{lemma}\label{lemma:cube_colimits_middle_exact}
    Let $\X \colon \Po([k-1]) \to \Vecs_{\Fb}$ be a $k$-cube. Then
    \begin{equation*}
        \underset{{S \subsetneq [k-1]}}{\colim} \X(S) \cong \coker\left((K_\X)_2 \xrightarrow{\partial_2} (K_\X)_1\right),
    \end{equation*}
    and
    \begin{equation*}
        \lim_{\emptyset \subsetneq S \subseteq [k-1]} \X(S) \cong \ker\left((K_\X)_{k-1} \xrightarrow{\partial_{k-1}} (K_\X)_{k-2}\right).
    \end{equation*}
\end{lemma}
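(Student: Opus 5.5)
The first isomorphism follows from a direct comparison of presentations of the colimit, and the second is its dual.

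For the colimit over $S \subsetneq [k-1]$, I would first use cofinality. Every proper subset $S$ is contained in some $(k-1)$-element subset $[k-1]\setminus\{j\}$, and two such maximal subsets $[k-1]\setminus\{i\}$ and $[k-1]\setminus\{j\}$ (with $i<j$) meet in $[k-1]\setminus\{i,j\}$. The colimit is therefore the quotient of $\bigoplus_{|S|=k-1}\X(S)$ by relations coming from the $(k-2)$-element subsets. Concretely, for each $T$ with $|T| = k-2$, missing $i<j$, and each $v \in \X(T)$, we identify the image of $v$ in $\X(T\cup\{i\})$ with its image in $\X(T \cup\{j\})$. Smaller $S$ impose no new relations: the image of $\X(S)$ in any $\X([k-1]\setminus\{j\})$ factors through some $\X(T)$ with $|T|=k-2$. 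The equalities between two different routes are generated by chaining such elementary identifications, since one can move one element at a time between maximal subsets.

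Next I would compare with the Koszul complex. By definition $(K_\X)_1=\bigoplus_{|S|=k-1}\X(S)$ and $(K_\X)_2=\bigoplus_{|T|=k-2}\X(T)$. For $T$ missing $t_0<t_1$, the differential is $\partial_2|_{\X(T)} = \X(T\subseteq T\cup\{t_0\}) - \X(T\subseteq T\cup\{t_1\})$. The image of $\partial_2$ is thus exactly the span of the relation vectors from the previous step, so the cokernel presents the colimit. I would verify this explicitly through the universal property: a cocone from the proper faces to $V$ is determined by its components on the maximal faces, and such components are compatible exactly when they vanish on $\Ima\partial_2$.

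The main obstacle is justifying that the relations from the $(k-2)$-faces suffice. This is where the cofinality-type argument is needed. The compatibility of a cocone $\X(S)\to V$ for smaller $S$ follows because each such map is forced to equal the composite through any maximal face containing $S$. Independence of the chosen maximal face comes from connecting two maximal faces through a $(k-2)$-face that contains $S$: if $S \subseteq [k-1]\setminus\{i\}$ and $S\subseteq[k-1]\setminus\{j\}$, then $S\subseteq [k-1]\setminus\{i,j\}$.

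The limit statement is dual. Here $(K_\X)_{k-1}=\bigoplus_{|S|=1}\X(S)$ and $(K_\X)_{k-2}=\bigoplus_{|S|=2}\X(S)$. The kernel of $\partial_{k-1}$ consists of families $(v_i)_{i}$ whose images agree in each $\X(\{i,j\})$ up to the sign convention. Componentwise, $\partial_{k-1}$ sends the summand $\X(\{s\})$ to the signed sum of its images in the summands $\X(\{s,t\})$, where each $\{s,t\}$ carries a sign that depends on the position of $t$ among the elements outside $\{s\}$. Replacing $v_i$ by $\pm v_i$ with suitable signs identifies the kernel with the compatible families. By the dual cofinality argument (every nonempty $S$ contains a singleton, and two singletons $\{i\},\{j\} \subseteq S$ both lie in $\{i,j\} \subseteq S$), these compatible families are precisely the limit.
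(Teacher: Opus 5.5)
Your argument is correct. The paper gives no proof of this lemma; it imports it as Lemma 8.15 of \cite{hem2025decomposingmultipersistencemodulesusing}. Your proposal is therefore a self-contained alternative to a citation rather than a reconstruction of an in-paper argument.

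Your route is the natural one. The layers $|S|=k-1$ and $|S|=k-2$ form a final subposet of the proper subsets: for each proper $S$, two maximal faces containing $S$ are linked by their intersection, which still contains $S$. Hence the colimit is the cokernel of the difference map $\bigoplus_{|T|=k-2}\X(T)\to\bigoplus_{|M|=k-1}\X(M)$, and that map is exactly $\partial_2$. The limit case is dual, using the layers of sizes $1$ and $2$.

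The one step you only assert is the existence of \emph{suitable signs}. A signed compatibility system $\X(\{a\}\subseteq\{a,b\})v_a=\epsilon_{ab}\,\X(\{b\}\subseteq\{a,b\})v_b$ can be untwisted by $w_i=\sigma_i v_i$ only if $\epsilon_{ab}=\sigma_a\sigma_b$ for all $a<b$, and that is not automatic. Here it holds. For $a<b$, the $\X(\{a,b\})$-component of $\partial_{k-1}(v)$ is $(-1)^{b-1}\X(\{a\}\subseteq\{a,b\})v_a+(-1)^{a}\X(\{b\}\subseteq\{a,b\})v_b$. So $\epsilon_{ab}=(-1)^{a+b}$, and $\sigma_i=(-1)^i$ works.

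Writing the twist out gives more than the bare statement, which only asserts abstract isomorphisms of objects. Under $w_i=(-1)^i v_i$, the differential $\partial_k\colon \X(\emptyset)\to\ker\partial_{k-1}$ becomes the canonical comparison map $\X(\emptyset)\to\lim_{\emptyset\subsetneq S}\X(S)$. On the colimit side no twist is needed, since $\partial_1$ has all signs $+$ and induces the canonical map $\colim_{S\subsetneq[k-1]}\X(S)\to\X([k-1])$. This compatibility with the canonical maps is what the paper tacitly uses in the proof of \autoref{thm:pdim_thm2}. There it identifies the map $(K_{F\circ\PC_a})_n\to\ker\partial_{n-1}$ with $(F\circ\PC_a)(\emptyset)\to\lim_{\emptyset\subsetneq S}(F\circ\PC_a)(S)$, and it needs the same identification in the direction $(1)\Rightarrow(2)$.
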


We say that an object $A$ in an abelian category $\A$ has projective dimension at most $n$ if it admits a projective resolution of the following form:
\begin{equation*}
    0 \to Q_n \to \dots \to Q_0 \to A.
\end{equation*}
\begin{lemma}\label{lemma:meet_preserving_projectives}
    Let $P$ be a lattice and $P'$ a finite lattice. If $f \colon P' \to P$ is a functor that preserves meet, then the functor
    \begin{equation*}
        f^* \colon \Fun(P, \Vecs_{\Fb}) \to \Fun(P', \Vecs_{\Fb}): F \mapsto F\circ f
    \end{equation*}
    preserves projective objects.

    In particular, if the projective dimension of $F \colon P \to \Vecs$ is $n$, then the projective dimension of $F \circ f \colon P' \to \Vecs_{\Fb}$ is at most $n$.
\end{lemma}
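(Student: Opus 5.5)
The plan is to show that $f^*$ sends each indecomposable free module $[x,-)$ on $P$ to a free module on $P'$. Since $f^*$ commutes with direct sums (it is computed pointwise), and every projective is free by \cite[Proposition 5]{projectiveDiagsFree}, this gives the first claim. Here $[x,-) = \Lan_{\{x\}\hookrightarrow P}\Fb$ is the functor with value $\Fb$ at every $y \ge x$, value $0$ elsewhere, and identity maps inside the up-set. So $f^*[x,-)$ is the indicator module of the subset $U = \{p \in P' : f(p) \ge x\}$, which is an up-set in $P'$ because $f$ is order-preserving.

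The key step is to identify $U$. If $U$ is empty, then $f^*[x,-) = 0$, which is projective. Otherwise I use that $f$ preserves meets. For $p, q \in U$ we get $f(p \wedge q) = f(p) \wedge f(q) \ge x$, so $U$ is closed under binary meets. Since $P'$ is finite, $U$ is finite and nonempty, so $u = \bigwedge U$ exists in $P'$ and lies in $U$. Then $U = \{p : p \ge u\}$, because $U$ is an up-set containing $u$ and every element of $U$ is at least $u$. Hence $f^*[x,-) \cong [u,-)$, which is free. One should also check that the structure maps agree: both modules are $\Fb$ with identity maps on $U$, so they do.

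For the ``in particular'' statement, take a projective resolution $0 \to Q_n \to \dots \to Q_0 \to F \to 0$ in $\Fun(P,\Vecs_{\Fb})$. The functor $f^*$ is exact, since kernels and cokernels in functor categories are computed objectwise. Applying $f^*$ therefore gives an exact sequence $0 \to f^*Q_n \to \dots \to f^*Q_0 \to F\circ f \to 0$, and by the first part each $f^*Q_i$ is projective. This is a projective resolution of $F\circ f$ of length $n$, so its projective dimension is at most $n$.

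The main obstacle is the minor point about infinite direct sums and possibly infinite $P$. The argument only needs that $f^*$ preserves arbitrary direct sums, which holds pointwise, and that an arbitrary direct sum of projectives is projective. The finiteness of $P'$ is used exactly once, to guarantee that the meet $\bigwedge U$ exists and lies in $U$.
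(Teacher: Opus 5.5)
Your proof is correct, but it is organized differently from the paper's. The paper never works with free modules. It uses the general criterion that a left adjoint whose right adjoint preserves epimorphisms preserves projectives. It then checks that $\Ran_f$ is exact by computing $(\Ran_f G)(x)\cong\lim_{f(y)\ge x}G(y)$, which is $G(u)$ or $0$ because $\{y\in P' : f(y)\ge x\}$ is either empty or has a least element $u$.

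You use exactly the same combinatorial fact, but you apply it to the generators, showing that $f^*[x,-)$ is $0$ or $[u,-)$. The two computations are Yoneda-dual, since $\mathrm{Hom}(f^*[x,-),G)\cong(\Ran_f G)(x)$.

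The paper's route does not depend on the classification of projectives: it handles an arbitrary projective object abstractly. Your route needs the cited fact that every projective over the possibly infinite lattice $P$ is free. You can drop that dependence cheaply. The modules $[x,-)$ form a generating family, so any projective is a direct summand of a free module, and the additive functor $f^*$ sends direct summands of free modules to direct summands of free modules.

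In exchange, your route gives more explicit information: $f^*$ sends free modules to free modules, and the generator at $x$ goes to a generator at $\bigwedge\{p : f(p)\ge x\}$ or vanishes. The exactness argument for the ``in particular'' part is the same in both proofs.
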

\begin{proof}
    We use the standard fact that a functor preserves projective objects if it has a right adjoint that preserves epimorphisms (\cite[Proposition 2.3.10]{weibel1994introduction}, \cite[Proposition 7.2.17]{hirschhorn}).

    The right adjoint of $f^*$ is $\Ran_f$. Given a functor $F \in \Fun(P', \Vecs_{\Fb})$, $\Ran_f F$ is given by (\cite[6.2]{riehl_category_theory_in_context})
    \begin{align*}
        (\Ran_f F)(x) \cong \lim_{y \in P', f(y) \ge x} F(y).
    \end{align*}
    Now, as the set $\{y \in P':f(y) \ge x\}$ is either empty or has a least element,
    \begin{align*}
        (\Ran_f F)(x) \cong \begin{cases}
            F(\bigwedge_{y \in P', f(y) \ge x} y), \quad & \{y \in P':f(y) \ge x\} \neq \emptyset, \\
            0, \quad &\textrm{otherwise.}
        \end{cases}
    \end{align*}
    Furthermore, $\Ran_f$ sends natural transformations to the obvious induced map.
    Because epimorphisms in $\Fun(P', \Vecs_{\Fb})$ are those natural transformations that are objectwise epimorphisms, it follows that if $\eta \colon F \to G$ is an epimorphism in $\Fun(P', \Vecs_{\Fb})$, then $\Ran_f(\eta)$ is also an epimorphism.

    For the final part, it suffices to note that as $f^*$ is exact and preserves projectives, it sends a projective resolution of $F$ of length $n$ to a projective resolution of $f^*(F)$ of length $\le n$.
\end{proof}

\subsection{Connections to projective dimension}

We show how cross-degree is related to projective dimension.

We first prove a lemma that is useful for computations of total fibers and cofibers.

\begin{lemma}\label{lemma:koszul_tfib}
    Let $\X \colon \Po([k-1]) \to \Vecs_{\Fb}$ be a $k$-cube with associated Koszul complex $K_{\X}$. Then
    \begin{equation*}
        \tcofib \X \cong H_0(K_{\X})
    \end{equation*}
    and
    \begin{equation*}
        \tfib \X \cong H_k(K_{\X}).
    \end{equation*}
\end{lemma}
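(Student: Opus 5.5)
We read both isomorphisms off the two ends of the Koszul complex, combining the cube descriptions of \autoref{lemma:tocofib_coprod} and \autoref{lemma:tofib_prod} with the definition of $K_{\X}$.

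\emph{Total cofiber.} In $K_{\X}$ we have $(K_\X)_0 = \X([k-1])$, since this is the only subset of size $k$, and $(K_\X)_1 = \bigoplus_{|S| = k-1} \X(S)$. For each $S = [k-1]\setminus\{t\}$, the differential $\partial_1$ restricts to $\pm\X(S \subseteq [k-1])$. Precomposing with the automorphism of $(K_\X)_1$ that multiplies the $S$-summand by the corresponding sign does not change the image. Hence $\Ima \partial_1 = \Ima\bigl(\bigoplus_{|S|=k-1}\X(S) \to \X([k-1])\bigr)$, where the map is the one induced by the structure maps. Since $\partial_0 = 0$, we get $H_0(K_\X) = \coker \partial_1$. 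By \autoref{lemma:tocofib_coprod}, with coproducts being direct sums in $\Vecs_{\Fb}$, this cokernel is $\tcofib \X$.

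\emph{Total fiber.} Dually, $(K_\X)_k = \X(\emptyset)$ and $(K_\X)_{k-1} = \bigoplus_{|S|=1}\X(S)$. The differential $\partial_k$ is given by
\begin{equation*}
x \mapsto \sum_j (-1)^j \X(\emptyset \subseteq \{j\})(x),
\end{equation*}
which differs from the canonical map into $\prod_{|S|=1}\X(S)$ only by signs on each component. An automorphism of the target does not change the kernel. Since $(K_\X)_{k+1} = 0$, we get $H_k(K_\X) = \ker \partial_k$, and by \autoref{lemma:tofib_prod} this kernel is $\tfib \X$.

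\emph{Degenerate case.} For $k=0$ we have $K_\X = \X(\emptyset)$ concentrated in degree $0$. The statement then holds under the convention that the empty total (co)fiber is $\X(\emptyset)$. Otherwise we assume $k \ge 1$, as the lemmas do.

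The only real care needed is the sign bookkeeping and the indexing convention: $(K_\X)_i$ collects the subsets of size $k-i$, so the top of the cube sits in degree $0$ and the bottom in degree $k$. Everything else is immediate. Alternatively, the total-cofiber statement follows from \autoref{lemma:cube_colimits_middle_exact} together with \autoref{lemma:tocofib}, but the direct argument above is simpler.
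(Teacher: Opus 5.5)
Your proof is correct and is essentially the paper's argument: read $H_0$ and $H_k$ off the two ends of $K_{\X}$ as $\coker\partial_1$ and $\ker\partial_k$, then match these with the coproduct and product descriptions of the total cofiber and total fiber. Your sign bookkeeping makes explicit a step the paper passes over with ``by definition'' (in fact $\partial_1$ carries no signs, since $j$ only takes the value $0$). Your citation of \autoref{lemma:tocofib_coprod} is also the more accurate reference for the direct-sum form, where the paper cites \autoref{lemma:tocofib}.
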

\begin{proof}
    We prove the first part. The second is dual.
    By definition,
    \begin{equation*}
        H_0(K_{\X}) = \coker \left( \bigoplus_{S \subseteq [k-1], |S| = k-1} \X(S) \to \X([k-1]) \right).
    \end{equation*}
    By \autoref{lemma:tocofib}, this is isomorphic to $\tcofib(\X)$.
\end{proof}

\begin{theorem}\label{thm:pdim_thm1}
    Let $P$ be a finite distributive lattice of dimension $n$, with $n \ge 1$, and $F \colon P \to \Vecs_{\Fb}$ a finitely generated persistence module. Then the following are equivalent.
    \begin{enumerate}
        \item $F$ has projective dimension at most $n-1$.
        \item $F$ is cross-degree $n-1$.
        \item $F \cong \Gamma^{n-1} F$.
    \end{enumerate}
\end{theorem}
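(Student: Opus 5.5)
The equivalence (2)$\Leftrightarrow$(3) is immediate from the general theory. A finite distributive lattice satisfies both chain conditions, so it is both a join- and a meet-factorization lattice. Hence the dual part of \autoref{thm:theoremB} gives (2)$\Rightarrow$(3), and the dual part of \autoref{thm:theoremA} gives (3)$\Rightarrow$(2), since $\Gamma^{n-1}F$ is cross-degree $n-1$. The substance is therefore (1)$\Leftrightarrow$(2). We will connect both conditions to the top Koszul homology of cubes, using \autoref{thm:betti_diag} and \autoref{lemma:koszul_tfib}.

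For (2)$\Rightarrow$(1), we first note that by \autoref{prop:order_dim_jdim}, every $a\in P$ has $\jdim(a)\le n$. The parent-cube $\PC_a$ is therefore an $m$-cube with $m=\jdim(a)\le n$, so its Koszul complex is concentrated in degrees $0,\dots,m$. It follows that $(\beta^iF)_a=H_i(K_{F\circ\PC_a})=0$ for $i>n$, and for $i=n$ it can be nonzero only when $\jdim(a)=n$. In that case \autoref{lemma:koszul_tfib} gives $(\beta^nF)_a\cong H_n(K_{F\circ\PC_a})\cong\tfib(F\circ\PC_a)$. Since $\PC_a$ is a strongly bicartesian $n$-cube by \autoref{lemma:pc_bicart}, cross-degree $n-1$ forces this total fiber to vanish. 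Thus $\beta^nF=0$, and since the minimal resolution has $Q_i=0$ for $i\ge n$, the projective dimension is at most $n-1$. (Here "cube indexed by $\Po([n-1])$", that is an $n$-cube, matches the cross-degree $n-1$ condition.)

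For (1)$\Rightarrow$(2), let $\X\colon\Po([n-1])\to P$ be an arbitrary strongly bicartesian $n$-cube; we must show $\tfib(F\circ\X)=0$. By \autoref{rmk:cubes_bicart_meet}, $\X$ preserves meets, so \autoref{lemma:meet_preserving_projectives} says that $F\circ\X\colon\Po([n-1])\to\Vecs_{\Fb}$ has projective dimension at most $n-1$. Now $\Po([n-1])$ is itself a finite distributive lattice in which the top element $[n-1]$ has exactly $n$ parents. Its parent-cube is the identity cube, so \autoref{thm:betti_diag} gives $(\beta^n(F\circ\X))_{[n-1]}\cong H_n(K_{F\circ\X})\cong\tfib(F\circ\X)$ by \autoref{lemma:koszul_tfib}. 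Projective dimension at most $n-1$ means the minimal resolution vanishes in degree $n$, since a minimal resolution is a summand of any resolution and the Betti numbers are $\operatorname{Tor}$-type invariants. Hence this Betti space is zero, and therefore $\tfib(F\circ\X)=0$.

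The step I expect to need the most care is the final one: justifying that "projective dimension $\le n-1$" implies that the $n$th term of the \emph{minimal} resolution vanishes. I would argue this via uniqueness of minimal resolutions up to isomorphism and their being direct summands of arbitrary projective resolutions, which holds in the finite-poset setting of \cite{realisationsposetstameness}. A smaller point to check is that $F\circ\X$ is finitely generated, which is automatic since $\Po([n-1])$ is finite and $F$ takes finite-dimensional values. I would also check the degenerate case where $\X$ is non-injective, which is harmless because the argument above never used injectivity.
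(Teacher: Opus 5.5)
Your proposal is correct and follows the paper's proof essentially step for step: (2)$\Leftrightarrow$(3) from Theorems A and B; (2)$\Rightarrow$(1) by computing the top Betti diagram via parent-cubes and \autoref{lemma:koszul_tfib}; and (1)$\Rightarrow$(2) by pulling back along the meet-preserving cube with \autoref{lemma:meet_preserving_projectives} and identifying $\tfib(F\circ\X)\cong H_n(K_{F\circ\X})$ with a Betti space. Your extra checks are all valid and make explicit what the paper leaves implicit: that $\jdim(a)\le n$ by \autoref{prop:order_dim_jdim}, that the parent-cube of the top element of $\Po([n-1])$ is the identity, that $F\circ\X$ is finitely generated, and why projective dimension at most $n-1$ forces the $n$th term of the minimal resolution to vanish.
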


\begin{proof}
    By \autoref{thm:theoremB}, $(2) \implies (3)$, and by \autoref{thm:theoremA}, $(3) \implies (2)$.

    We show $(2) \implies (1)$. Suppose that $F$ is cross-degree $n-1$. We need to show that $(\beta^n F)_a \cong 0$ for every $a \in P$. By \autoref{thm:betti_diag}, it suffices to show that $H_n (K_{F \circ \PC_a}) \cong 0$ for every $a \in P$. Let $a \in P$.
    If $\jdim(a) < n$, then $(K_{F \circ \PC_a})_n = 0$, so we are done.
    If $\jdim(a) = n$, then
    \begin{equation*}
         H_n(K_{F \circ \PC_a}) \cong \tfib (F \circ \PC_a),
    \end{equation*}
    by \autoref{lemma:koszul_tfib}. Now, as $\PC_a$ is strongly bicartesian, and $F$ is cross-degree $n-1$ by assumption, $\tfib(F \circ \PC_a) \cong 0$, as desired.

    Finally, we show that $(1) \implies (2)$. Suppose that $F$ has projective dimension at most $n-1$, and let $\X \colon \Po([n-1]) \to P$ be a strongly bicartesian $n$-cube.
    Then $F \circ \X$ also has projective dimension $\le n-1$, by \autoref{lemma:meet_preserving_projectives} (recall from \autoref{rmk:cubes_bicart_meet} that $\X$ preserves meets). 
    Then, again applying \autoref{lemma:koszul_tfib} and \autoref{thm:betti_diag},
    \begin{equation*}
        \tfib(F \circ \X) \cong H_n (K_{F \circ \X}) \cong 0.
    \end{equation*}
\end{proof}

\begin{theorem}\label{thm:pdim_thm2}
    Let $P$ be a finite distributive lattice of dimension $n$, with $n \ge 2$, and $F \colon P \to \Vecs_{\Fb}$ a finitely generated persistence module. Then the following are equivalent.
    \begin{enumerate}
        \item $F$ has projective dimension at most $n-2$.
        \item $F$ is degree $n-1$ and cross-degree $n-2$.
        \item $F \cong T^{n-1} \Gamma^{n-2} F$.
    \end{enumerate}
\end{theorem}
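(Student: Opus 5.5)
The plan follows the proof of \autoref{thm:pdim_thm1}. I will show $(1)\iff(2)$ through the Betti diagrams of \autoref{thm:betti_diag}, and $(2)\iff(3)$ through Theorems A and B together with their analogues for poset calculus.

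\textbf{$(2)\implies(1)$.} It suffices to show that $(\beta^{n-1}F)_a \cong 0$ and $(\beta^n F)_a\cong 0$ for every $a\in P$. By \autoref{thm:betti_diag} these are $H_{n-1}$ and $H_n$ of $K_{F\circ \PC_a}$, where $k=\jdim(a)\le n$ by \autoref{prop:order_dim_jdim}.
\begin{itemize}
\item If $k\le n-2$, both groups vanish for degree reasons.
\item If $k=n-1$, then $H_{n-1}(K_{F\circ\PC_a})\cong\tfib(F\circ\PC_a)$ by \autoref{lemma:koszul_tfib}. This vanishes because $\PC_a$ is strongly bicartesian (\autoref{lemma:pc_bicart}) and $F$ is cross-degree $n-2$.
\item If $k=n$, then $H_n\cong \tfib(F\circ\PC_a)\cong 0$, since cross-degree $n-2$ implies cross-degree $n-1$.
\end{itemize}
The remaining case is $H_{n-1}$ with $k=n$. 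Here $H_{n-1}(K)=\ker\partial_{n-1}/\Ima\partial_n$. By \autoref{lemma:cube_colimits_middle_exact}, $\ker\partial_{n-1}\cong\lim_{S\neq\emptyset}F\circ\PC_a(S)$, and $\Ima \partial_n$ is the image of $F\circ\PC_a(\emptyset)$ under the canonical map. So $H_{n-1}$ is the cokernel of $\X(\emptyset)\to\lim_{S\ne\emptyset}\X(S)$. Because $F$ is degree $n-1$, this map is an isomorphism for the strongly bicartesian $n$-cube $\PC_a$, so $H_{n-1}=0$.

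\textbf{$(1)\implies(2)$.} Let $\X$ be a strongly bicartesian cube, either an $(n-1)$-cube or an $n$-cube. By \autoref{lemma:meet_preserving_projectives}, $F\circ\X$ has projective dimension $\le n-2$ on the lattice $\Po([k-1])$. Its Betti diagram at the top element is computed by the Koszul complex of the whole cube, because the parent-cube of $[k-1]$ is the identity cube. Hence $H_i(K_{F\circ\X})=0$ for $i\ge n-1$.
\begin{itemize}
\item For $k=n-1$, this gives $\tfib(F\circ\X)\cong H_{n-1}=0$, so $F$ is cross-degree $n-2$.
\item For $k=n$, we get $H_n=0$, meaning the comparison map is injective, and $H_{n-1}=0$, meaning it is surjective by the identification above. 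So $F\circ\X$ is cartesian and $F$ is degree $n-1$.
\end{itemize}

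\textbf{$(2)\iff(3)$.} Assume $(2)$. The dual of \autoref{thm:theoremB} gives $F\cong\Gamma^{n-2}F$, and the dual of \autoref{theoremB_cat} gives $F\cong T^{n-1}F$; finite distributive lattices are meet-factorization lattices. Combining these, $F\cong T^{n-1}\Gamma^{n-2}F$.

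Conversely, assume $(3)$. By the dual of \autoref{theoremA_cat}, $T^{n-1}G$ is degree $n-1$ for any $G$. By \autoref{thm:theoremA}, $\Gamma^{n-2}F$ is cross-degree $n-2$, and the dual part of \autoref{lemma:Tn_preserve_cross_degree} shows that $T^{n-1}$ preserves this property.

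I expect the main obstacle to be the identification of $H_{n-1}$ of an $n$-cube's Koszul complex with the cokernel of the comparison map to the limit. This requires checking that $\Ima\partial_n$ inside $\ker\partial_{n-1}\cong\lim$ is exactly the image of $\X(\emptyset)$, via the explicit isomorphism in \autoref{lemma:cube_colimits_middle_exact}.
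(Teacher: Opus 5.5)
Your proposal is correct and follows essentially the same route as the paper. You get $(2)\Leftrightarrow(3)$ from Theorems A and B, their poset-calculus analogues, and \autoref{lemma:Tn_preserve_cross_degree}, and you get $(1)\Leftrightarrow(2)$ from the Koszul complexes of parent-cubes together with \autoref{lemma:koszul_tfib}, \autoref{lemma:cube_colimits_middle_exact} and \autoref{lemma:meet_preserving_projectives}. The only cosmetic difference is in the case $\jdim(a)=n$: you kill $H_n$ by noting that cross-degree $n-2$ implies cross-degree $n-1$, whereas the paper obtains $H_n=0$ and $H_{n-1}=0$ together from the comparison map $(F\circ\PC_a)(\emptyset)\to\lim_{S\neq\emptyset}(F\circ\PC_a)(S)$ being an isomorphism.
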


\begin{proof}
\begin{description}
    \item[$(2) \implies (3)$] As $F$ is cross-degree $n-2$, $\Gamma^{n-2} F \cong F$, by \autoref{thm:theoremB} (as any finite distributive lattice is a meet-factorization lattice). Similarly, as $F$ is degree $n-1$, $T^{n-1} F \cong F$, by (the dual of) \cite[Theorem B]{hem2025decomposingmultipersistencemodulesusing}. Thus, $T^{n-1} \Gamma^{n-2} F \cong T^{n-1} F \cong F$.
    \item[$(3) \implies (2)$] By \autoref{thm:theoremA}, $\Gamma^{n-2} F$ is cross-degree $n-2$. Hence, $T^{n-1} \Gamma^{n-2} F$ is cross-degree $n-2$, by \autoref{lemma:Tn_preserve_cross_degree}. Furthermore, $T^{n-1} \Gamma^{n-2} F$ is degree $n-1$ by (the dual of) \cite[Theorem A]{hem2025decomposingmultipersistencemodulesusing}.
    \item[$(2) \implies (1)$] By \autoref{thm:betti_diag}, it suffices to show that for each $a \in P$, the Koszul complex $K_{F \circ \PC_a}$ has 0 homology in degree $n-1$. There are three cases to consider:
    \begin{itemize}
        \item $\jdim(a) < n-1$: In this case, then $(K_{F \circ \PC_a})_i = 0$ for $i \ge n-1$, so the homology in these degrees are trivially 0.
        \item $\jdim(a) = n-1$: In this case, $\PC_a$ is a strongly bicartesian $(n-1)$-cube, and $H_{n-1} (K_{F \circ \PC_a}) \cong \tfib (F \circ \PC_a)$, by \autoref{lemma:koszul_tfib}.
        As $F$ is cross-degree $n-2$, $\tfib (F \circ \PC_a) \cong 0$.
        \item $\jdim(a) = n$: In this case, $\PC_a$ is a strongly bicartesian $n$-cube. By \autoref{lemma:cube_colimits_middle_exact}, the map
        \begin{equation*}
            (K_{F \circ \PC_a})_n \to \ker\left((K_{F \circ \PC_a})_{n-1} \to (K_{F \circ \PC_a})_{n-2} \right)
        \end{equation*}
        is naturally isomorphic to
        \begin{equation*}
            (F \circ \PC_a)(\emptyset) \to \lim_{\emptyset \subsetneq S \subseteq [n-1]} F\circ \PC_a,
        \end{equation*}
        which is an isomorphism as $F$ is degree $n-1$.
        Thus, $H_{n-1} (K_{F \circ \PC_a}) \cong H_{n} (K_{F \circ \PC_a}) \cong 0$.
    \end{itemize}
    \item[$(1) \implies (2)$] 
        Suppose that $F$ has projective dimension $n-2$. We first show that $F$ is cross-degree $n-2$.
        Let $\X \colon \Po([n-2]) \to P$ be a strongly bicartesian $(n-1)$-cube.
        Then $F \circ \X$ also has projective dimension $n-2$, by \autoref{lemma:meet_preserving_projectives}. 
        Hence,
        \begin{equation*}
            \tfib(F \circ \X) \cong H_{n-1} (K_{F \circ \X}) \cong 0.
        \end{equation*}

        We now show that $F$ is degree $n-1$.
        Let $\X \colon \Po([n-1]) \to P$ be a strongly bicartesian $n$-cube.
        Then $F \circ \X$ also has projective dimension $n-2$, again by \autoref{lemma:meet_preserving_projectives}.
        Hence, $H_n (K_{F \circ \X}) \cong H_{n-1} (K_{F \circ \X}) \cong 0$, so the map
        \begin{equation*}
            (K_{F \circ \X})_n \to \ker \big( (K_{F \circ \X})_{n-1} \to (K_{F \circ \X})_{n-2} \big)
        \end{equation*}
        is an isomorphism. Hence, by \autoref{lemma:cube_colimits_middle_exact}, $F\circ \X$ is cartesian.
\end{description}
\end{proof}

\begin{remark}
    It is not known to the author whether the order of $T^{n-1}$ and $\Gamma^{n-2}$ can be swapped in the preceding theorem, i.e., whether $\Gamma^{n-2} T^{n-1} F$ is always isomorphic to $T^{n-1} \Gamma^{n-2} F$.
\end{remark}

\begin{example}[Non-example]
    We show that the conditions on the poset dimension in \autoref{thm:pdim_thm1} and \autoref{thm:pdim_thm2} are necessary.
    This example is inspired by \cite[Example 15]{lebovici2024localcharacterizationblockdecomposabilitymultiparameter}.
    Consider the following multipersistence module over $\{0, 1\}^3$.
    \begin{center}
    \begin{tikzcd}[sep=large]
    & \Fb \arrow[rr, "\begin{pmatrix}0 \\ 1\end{pmatrix}"] \arrow[from=dd]              &                           & \Fb^2            \\
    0 \arrow[rr, crossing over] \arrow[ru]                                               &                           & \Fb \arrow[ru, "\begin{pmatrix}1 \\ 0\end{pmatrix}" description]              &              \\
    & 0 \arrow[rr] &                           & \Fb \arrow[uu, "\begin{pmatrix}1 \\ 1\end{pmatrix}"'] \\
    0 \arrow[uu] \arrow[rr] \arrow[ru] &                           & 0 \arrow[uu, crossing over] \arrow[ru] &             
    \end{tikzcd}
    \end{center}
    This multipersistence module is degree 1, as every bicartesian square gets sent to a pullback of vector spaces. Furthermore, the multipersistence module is cross-degree 0, as every morphism in the diagram is a monomorphism.
    However, it is not projective (i.e., its projective dimension is not 0), as it is indecomposable and thus not free.
    The reason \autoref{thm:pdim_thm2} does not apply here is because the poset $\{0,1\}^3$ has dimension 3, not 2.
\end{example}

\begin{remark}
    We remark that everything in this section can be formally dualized. In particular, both \autoref{thm:pdim_thm1} and \autoref{thm:pdim_thm2} still hold if one exchanges projective resolutions with injective resolutions and calculus with cocalculus.
\end{remark}

\subsection{Examples}

We give several examples of the cross-(co)degree of multipersistence modules. 
In general, an $n$-parameter multipersistence module has cross-degree at most $n$, and this bound is sharp (and likewise for cross-codegree).
However, as we demonstrate in this section, several of the multifiltrations that are used in real-world applications give rise to multipersistence modules with low cross-degree or low cross-codegree.

\begin{example}
In \autoref{tab:degrees}, we list all the interval modules over $\{0,1\}^2$ and give their degree, cross-degree, codegree and cross-codegree.

\begin{table}[hbtp]
    \centering
    \begin{tabular}{c|c|c|c|c} % Three centered columns (c)
         & \; Degree \; & Cross-degree & Codegree & Cross-codegree \\
        {\begin{tikzcd}
            0 \arrow[r]            & 0           \\
            \Fb \arrow[u] \arrow[r] & 0 \arrow[u]
        \end{tikzcd}}
        & 2 & 2 & 1 & 0 \\
        {\begin{tikzcd}
            0 \arrow[r]            & 0           \\
            0 \arrow[u] \arrow[r] & \Fb \arrow[u]
        \end{tikzcd}},
        %& 2 & 1 & 2 & 1 \\
        {\begin{tikzcd}
            \Fb \arrow[r]            & 0           \\
            0 \arrow[u] \arrow[r] & 0 \arrow[u]
        \end{tikzcd}}
        & 2 & 1 & 2 & 1 \\
        {\begin{tikzcd}
            0 \arrow[r]            & \Fb           \\
            0 \arrow[u] \arrow[r] & 0 \arrow[u]
        \end{tikzcd}}
        & 1 & 0 & 2 & 2 \\
        {\begin{tikzcd}
            0 \arrow[r]            & 0           \\
            \Fb \arrow[u] \arrow[r] & \Fb \arrow[u]
        \end{tikzcd}},
        %& 1 & 1 & 1 & 0 \\
        {\begin{tikzcd}
            \Fb \arrow[r]            & 0           \\
            \Fb \arrow[u] \arrow[r] & 0 \arrow[u]
        \end{tikzcd}}
        & 1 & 1 & 1 & 0 \\
        {\begin{tikzcd}
            \Fb \arrow[r]            & \Fb           \\
            0 \arrow[u] \arrow[r] & 0 \arrow[u]
        \end{tikzcd}},
        {\begin{tikzcd}
            0 \arrow[r]            & \Fb           \\
            0 \arrow[u] \arrow[r] & \Fb \arrow[u]
        \end{tikzcd}}
        & 1 & 0 & 1 & 1 \\
        {\begin{tikzcd}
            \Fb \arrow[r]            & 0           \\
            \Fb \arrow[u] \arrow[r] & \Fb \arrow[u]
        \end{tikzcd}}
        & 2 & 1 & 2 & 0 \\
        {\begin{tikzcd}
            \Fb \arrow[r]            & \Fb           \\
            0 \arrow[u] \arrow[r] & \Fb \arrow[u]
        \end{tikzcd}}
        & 2 & 0 & 2 & 1 \\
        {\begin{tikzcd}
            \Fb \arrow[r]            & \Fb           \\
            \Fb \arrow[u] \arrow[r] & \Fb \arrow[u]
        \end{tikzcd}}
        & 0 & 0 & 0 & 0 \\
    \end{tabular}
    \caption{The possible interval modules $\{0,1\}^2 \to \Vecs_{\Fb}$, and their (co)degree and cross-(co)degree.}
    \label{tab:degrees}
\end{table}
    
\end{example}

\begin{example}
Given a finite metric space $X$ equipped with a function $f \colon X \to \R$, the \emph{sublevel-Rips} bifiltration (\cite[Definition 5.1]{BotnanLesnickMultipersistence}) is defined as the bifiltration
\begin{align*}
    F \colon &\R^2 \to \SCplx \\
    &(a,r) \mapsto \big\{\{x_1, \dots, x_n\} \subseteq X : f(x_i) \le a \ \forall i, d(x_i,x_j) \le r \ \forall i,j\big\}.
\end{align*}
A common choice of function in the sublevel-Rips filtration is the inverse of a density function, i.e., a function whose value is high in dense regions of the data (\cite[Example 5.2]{BotnanLesnickMultipersistence}, \cite[Example 2]{multipersistence_source_2}).

Let $F$ be the sublevel-Rips bifiltration of some metric space $X$ and $f \colon X \to \R$ a function. Then $H_0 F$ is a cross-codegree 1 bipersistence module.
To see this, observe that $H_0 F(a, r \le r')$ is an epimorphism for any $a$ and $r \le r'$ (as $F(a,r)_0 = F(a,r')_0$). Thus, in any square
\begin{equation*}
    \begin{tikzcd}
    {F(a,r')} \arrow[r]          & {F(a',r')}          \\
    {F(a,r)} \arrow[u] \arrow[r] & {F(a',r)} \arrow[u],
    \end{tikzcd}
\end{equation*}
the vertical arrows are epimorphisms, and hence the total cofiber is 0.

In \cite{elder_staircodes}, the persistence modules arising from $H_0$ of sublevel-Rips bifiltrations is studied. The authors show that due to the special structure of these persistence modules, one can define an efficiently computable barcode-like invariant.
\end{example}

We will now go on to describe a class of multipersistence modules whose cross-degree is 1. We first need the following lemma, which is a standard fact in algebraic topology.

\begin{lemma}\label{lemma:Hd_Rd}
    Let $X \subseteq \R^d$ be a finite CW complex embedded in $\R^d$, and let $R$ be a ring. For all $i \ge d$,
    \begin{equation*}
        H_i(X;R) \cong 0.
    \end{equation*}
\end{lemma}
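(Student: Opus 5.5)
The plan is to show that a compact subset of $\R^d$ has vanishing homology in degrees $\ge d$, and to reduce the CW statement to that. Since $X$ is a finite CW complex, it is compact. It is also a Euclidean neighborhood retract: a finite CW complex is homotopy equivalent to a finite simplicial complex, and finite-dimensional locally contractible compact metric spaces are ENRs. So singular and Čech homology of $X$ agree, and we may work with whichever is more convenient.

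The core step is Alexander duality. For a compact, locally contractible $X \subseteq \R^d \subset S^d$,
\begin{equation*}
\tilde H_i(X;R) \cong \tilde H^{d-i-1}(S^d \setminus X;R).
\end{equation*}
For $i \ge d$, the degree on the right is $d-i-1 \le -1$. For $i > d-1$, the reduced cohomology of the open set $S^d\setminus X$ in negative degrees vanishes. The only subtle case is degree $-1$, which is nonzero only when $S^d \setminus X$ is empty. That cannot happen, because $X \subseteq \R^d$ misses the point at infinity. Hence $\tilde H_i(X;R)=0$ for all $i\ge d$, and since $d\ge 1$, unreduced and reduced homology agree in these degrees. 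The case $d=0$ is trivial: $X$ is then a point or empty, $H_0$ of a point is $R$, and the statement should be read with $d\ge1$ or for reduced homology.

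If I want to avoid duality with general coefficients, I would instead take a closed regular neighborhood. Thicken $X$ to a compact PL $d$-manifold $N$ with boundary in $\R^d$ that deformation retracts onto $X$; this is available because $X$ is an ENR, so it is a retract of a small open neighborhood. Then $H_i(N;R)=0$ for $i>d$ by dimension. For $i=d$, every component of $N$ has nonempty boundary, since $N$ is compact in $\R^d$ and so has no closed component. A connected compact $d$-manifold with nonempty boundary has $H_d=0$ with any coefficients.

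The main obstacle is precisely this degree $d$ case: arguing rigorously that $H_d$ vanishes rather than just $H_{>d}$. Vanishing above $d$ is immediate once $X$ is replaced by something $d$-dimensional. I would handle degree $d$ via the regular neighborhood argument, or equivalently by noting that a nonzero class in $H_d$ would be detected by a local orientation of an open subset of $\R^d$ entirely contained in $X$, which is impossible by compactness. Since this is a standard fact, in the paper I would cite Hatcher's Proposition 3.29 and the subsequent corollary, which state exactly that $H_i(X;R)=0$ for $i\ge d$ when $X$ is a compact, locally contractible subspace of $\R^d$, and note that finite CW complexes are compact and locally contractible.
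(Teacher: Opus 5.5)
Your argument is correct, but your main route differs from the paper's. The paper uses no duality. $X$ is compact and locally contractible (Hatcher, Proposition A.4), hence a retract of an open neighbourhood $U\subseteq\R^d$ (Theorem A.7). Each component of $U$ is a noncompact $d$-manifold, so $H_i(U;R)=0$ for $i\ge d$ by Proposition 3.29. The retraction $r$ satisfies $r\circ\iota=\id_X$ for the inclusion $\iota\colon X\hookrightarrow U$, so $H_i(X;R)$ injects into $H_i(U;R)=0$.

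Your Alexander duality route is valid but heavier. The form you quote pairs homology of $X$ with cohomology of $S^d\setminus X$. It does hold for compact ENRs, but it is not the commonly stated form, which pairs $\tilde H^{i}(X)$ with $\tilde H_{d-i-1}(S^d\setminus X)$. So it needs a reference. From the common form you would still need finite generation of $H_*(X;\mathbb{Z})$ and universal coefficients to reach $H_i(X;R)$. Moreover, the proof of duality already contains the top-degree vanishing for noncompact manifolds that the paper invokes directly. What duality buys is more information, e.g.\ integral torsion-freeness in degrees $d-1$ and $d-2$. The paper's argument buys brevity and arbitrary coefficients at no extra cost.

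Your second approach is one step from the paper's. You already have the retraction from $U$, so you can apply Proposition 3.29 to $U$ and drop the thickening. As written, the thickening is overclaimed. An ENR structure gives only a retraction from an open set, not a compact PL manifold neighbourhood that \emph{deformation} retracts onto $X$; regular neighbourhoods require a tame embedding. Since you only use a retraction, any compact PL $d$-manifold $N$ with $X\subseteq N\subseteq U$ would do.

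Likewise, the local-orientation argument must be run in $U$, not in $X$. The image of a class of $H_d(X)$ in $H_d(\R^d,\R^d\setminus\{x\})$ factors through $H_d(\R^d)=0$, so local orientations at points of $X$ detect nothing.

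Finally, your remark on $d=0$ is a genuine caveat to the statement. For $X$ a point in $\R^0$ one has $H_0(X;R)=R$. The paper's appeal to Proposition 3.29 tacitly requires $d\ge 1$, since the only nonempty open subset of $\R^0$ is compact.
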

\begin{proof}
    By \cite[Proposition 3.29]{hatcher}, open subsets $U$ of $\R^d$ satisfy
    \begin{equation*}
        H_i(U; R) \cong 0, \quad \textrm{for all } i \ge d.
    \end{equation*}
    By \cite[Theorem A.7]{hatcher}, every locally contractible compact subset of $\R^d$ is a retract of an open neighborhood. By \cite[Proposition A.4]{hatcher}, CW complexes are locally contractible. Furthermore, finite CW complexes are compact.
    Finally, an inclusion of a retract induce an injection on homology, which concludes the proof.
\end{proof}

Let $P$ be a poset. Given a topological space $W$ and function $f \colon W \to P$, the \emph{sublevel filtration} of $f$ is the functor
\begin{equation*}
    P \to \Top, \quad x \mapsto \{w \in W : f(w) \le x\}.
\end{equation*}
A filtration $F \colon P \to \Top$ is said to be \emph{1-critical} if it is isomorphic to the sublevel filtration of some function $f \colon \colim F \to P$ \cite{carlsson2009computing, blumberg2022universality}.

\begin{proposition}\label{prop:top_dim_homology_crossdeg1}
    Let $P$ be a lattice, and $F \colon P \to \Top$ a filtration of finite CW complexes in $\R^d$. If $F$ is 1-critical, then $H_{d-1} (F(-); R) \colon P \to \Mod_R$ is cross-degree 1.
\end{proposition}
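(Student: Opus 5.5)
We need to show: for every strongly bicartesian square $\X \colon \Po([1]) \to P$, the total fiber of $H_{d-1}(F \circ \X)$ vanishes. By \autoref{lemma:tofib_prod} this total fiber is the kernel of
\begin{equation*}
H_{d-1}(F(\X(\emptyset))) \to H_{d-1}(F(\X(\{0\}))) \oplus H_{d-1}(F(\X(\{1\}))),
\end{equation*}
so the plan is to show this map is injective.

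Write $x = \X(\{0\})$, $y = \X(\{1\})$, with $x \wedge y = \X(\emptyset)$ and $x \vee y = \X([1])$. Let $f \colon W \to P$ be the function with $F(p) = \{w : f(w) \le p\}$. The key point of 1-criticality is that sublevel sets intersect along meets:
\begin{equation*}
F(x) \cap F(y) = \{w : f(w) \le x,\ f(w) \le y\} = \{w : f(w) \le x \wedge y\} = F(x \wedge y).
\end{equation*}
Thus $A = F(x)$ and $B = F(y)$ are subcomplexes of $F(x \vee y)$ (or of the colimit, which is a CW complex containing both as subcomplexes) with $A \cap B = F(x \wedge y)$. The union $A \cup B$ is itself a finite CW complex embedded in $\R^d$. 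Here we also use that these sublevel sets are subcomplexes, which is implicit in the hypothesis that $F$ is a filtration of finite CW complexes.

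Apply the Mayer--Vietoris sequence for the CW pair of subcomplexes $A, B$ of $A \cup B$:
\begin{equation*}
H_d(A \cup B; R) \to H_{d-1}(A \cap B; R) \to H_{d-1}(A;R) \oplus H_{d-1}(B;R).
\end{equation*}
The second map is the map induced by the two inclusions, with a sign on one component; the signs do not affect its kernel. So it suffices that $H_d(A \cup B; R) \cong 0$, which is exactly \autoref{lemma:Hd_Rd} applied to the finite CW complex $A \cup B \subseteq \R^d$. Hence the map is injective and the total fiber vanishes.

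The main obstacle is the point-set bookkeeping. First, we need $A \cup B$ to be a CW complex in $\R^d$ and the Mayer--Vietoris sequence to apply, which holds for subcomplexes of a CW complex. Second, we must identify the cube's maps with the inclusion-induced maps. Third, we must use the bicartesian hypothesis only through $\X(\emptyset) = x \wedge y$, given by \autoref{lemma:codecomp_from_bicartesian}. Note that the top corner $x \vee y$ is not even needed, since \autoref{lemma:tofib_prod} only involves the bottom three vertices of the square.
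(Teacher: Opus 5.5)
Your proposal is correct and follows the paper's proof essentially step for step. You reduce via \autoref{lemma:tofib_prod} to injectivity of $H_{d-1}F(x\wedge y) \to H_{d-1}F(x) \oplus H_{d-1}F(y)$, use 1-criticality to get $F(x \wedge y) = F(x) \cap F(y)$, and then kill the obstruction in the Mayer--Vietoris sequence using $H_d(F(x) \cup F(y); R) \cong 0$ from \autoref{lemma:Hd_Rd}; the paper does the same. The only cosmetic differences are that the paper states injectivity for all pairs $x, y \in P$ rather than starting from a strongly bicartesian square, and that your remarks on subcomplexes and Mayer--Vietoris signs spell out bookkeeping the paper leaves implicit.
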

\begin{proof}
    By \autoref{lemma:tofib_prod}, it suffices to show that for all $x, y \in P$, the morphism $H_{d-1} F(x \wedge y) \to H_{d-1} F(x) \oplus H_{d-1} F(y)$ is a monomorphism.
    
    As $F$ is 1-critical, there exists a function $f \colon \colim F \to P$ such that for all $x \in P$,
    \begin{equation*}
        F(x) = \{a \in \colim F : f(a) \le x\}.
    \end{equation*}
    Observe that for all $x, y \in P$,
    \begin{align*}
        &a \in F(x \wedge y) \iff f(a) \le x \wedge y 
        \iff f(a) \le x \textrm{ and } f(a) \le y \\
        \iff &a \in F(x) \textrm{ and } a \in F(y) \iff a \in F(x) \cap F(y).
    \end{align*}
    Hence, $F(x \wedge y) = F(x) \cap F(y)$.
    Now, consider the Mayer-Vietoris sequence associated to $F(x)$ and $F(y)$:
    \begin{equation*}
        \dots \to H_d(F(x) \cup F(y)) \to H_{d-1} F(x \wedge y) \to H_{d-1} F(x) \oplus H_{d-1} F(y) \to \dots.
    \end{equation*}
    By \autoref{lemma:Hd_Rd}, $H_d(F(x) \cup F(y)) \cong 0$. Hence, $H_{d-1}F(x \wedge y) \to H_{d-1} F(x) \oplus H_{d-1} F(y)$ is a monomorphism. This concludes the proof.
\end{proof}

Examples of finite CW complexes include finite simplicial complexes and finite cubical complexes.

\begin{example}\label{ex:colors}
    One common application of TDA is to analyze images (2D or 3D images, typically) using persistent homology. % TODO: add citation
    One models the image grid by a cubical complex and defines a real-valued function on the complex representing the pixel values, and then applies persistent homology to the sublevel filtration of this function.
    
    Suppose now that we have more than one color channel in the image (for example, red, green and blue), and we want to take those into account. One way to solve this is to construct a multifiltration. The different colors define functions $f_1, \dots, f_n$ from a cubical complex $X$ to $[m]$, where $[m] = \{0, \dots, m\}$ is the possible pixel values. These assemble into a single function
    \begin{equation*}
        f_1 \times \dots \times f_n \colon X \to [m]^n,
    \end{equation*}
    whose sublevel filtration gives a 1-critical multifiltration $F \colon [m]^n \to \Top$. This is a multifiltration of finite cubical complexes in $\R^d$, where $d$ is the dimension of the image.
    Hence,  the conditions of \autoref{prop:top_dim_homology_crossdeg1} are satisfied. 
    In particular, if the image is 2D, then $H_1 F$ is cross-degree 1, and if the image is 3D, then $H_2 F$ is cross-degree 1.
    
    This multifiltration is explained in more detail in \cite[Appendix C]{cumperlay}.
    An example of an application of multifiltrations induced by multi-channel images include \cite{carriere2020multiparameter}, where a bifiltration is constructed from a two-channel image. Instead of the channels representing different colors, one of the channels represent ``number of immune cells'', and the other ``number of cancer cells''.
\end{example}

\begin{remark}
    We further explain the implications of \autoref{ex:colors}.
    By \autoref{thm:pdim_thm1}, knowing the cross-degree tells us something about the projective dimension of our multipersistence module (i.e., the length of its minimal projective resolution).

    As an example, consider the case with two colors on a 2D image. Then, as explained in \autoref{ex:colors}, this gives rise to a bifiltration $F \colon [m]^2 \to \Top$, and $H_1 F$ has cross-degree 1.
    Thus, the bipersistence module $H_1 F \colon [m]^2 \to \Vecs_{\Fb}$ has projective at most dimension 1. On the other hand, arbitrary bipersistence modules can have projective dimension 2 (for example, a rectangle interval module has projective dimension 2).
\end{remark}

\bibliographystyle{plain}
\bibliography{ref}

\end{document}